\theoremstyle{plain}
 \newtheorem{thm}{Theorem}[section]
 \newtheorem*{theorem*}{Theorem}
 \newtheorem{prop}[thm]{Proposition}
 \newtheorem{lem}[thm]{Lemma}
 \newtheorem{cor}[thm]{Corollary}
 \newtheorem*{claim}{Claim}
\theoremstyle{definition}
 \newtheorem{defn}[thm]{Definition}
\theoremstyle{remark}
 \newtheorem{rmk}[thm]{Remark}
\newcommand{\N}{{\mathbb N}}
\newcommand{\Q}{{\mathbb Q}}
\newcommand{\Z}{{\mathbb Z}}
\newcommand{\Gm}{\mathbb{G}_{\mr{m}}}
\newcommand{\Gml}{\mathbb{G}_{\mr{m,log}}}
\newcommand{\mr}{\mathrm}
\newcommand{\mc}{\mathcal}
\newcommand{\fsS}{(\mr{fs}/S)}
\newcommand{\fsSet}{(\mr{fs}/S)_{\mr{\acute{e}t}}}
\newcommand{\fsSfl}{(\mr{fs}/S)_{\mr{fl}}}
\newcommand{\fsSket}{(\mr{fs}/S)_{\mr{k\acute{e}t}}}
\newcommand{\fsSkfl}{(\mr{fs}/S)_{\mr{kfl}}}
\newcommand{\Set}{S_{\mr{\acute{e}t}}}
\newcommand{\Sket}{S_{\mr{k\acute{e}t}}}
\newcommand{\Sfl}{S_{\mr{fl}}}
\newcommand{\Skfl}{S_{\mr{kfl}}}
\newcommand{\Spec}{\mathop{\mr{Spec}}}
\newcommand{\gp}{\mathrm{gp}}
\newcommand{\et}{\mathrm{\acute{e}t}}
\title[Log $p$-divisible groups associated to log 1-motives]{Log $p$-divisible groups associated to log 1-motives}
\subjclass[2020]{14L05 (primary), 14A21, 14K99, 11G99 (secondary)}
\keywords{log $p$-divisible groups, formally log smooth, log 1-motives, log abelian varieties with constant degeneration, Serre-Tate theory}
\author[Matti W\"urthen]{\bfseries Matti W\"urthen}
\author[Heer Zhao]{\bfseries Heer Zhao}
\address{
    Matti W\"urthen, 
    Institut f\"ur Mathematik , 
    Goethe-Universit\"at Frankfurt, 
    Frankfurt am Main 60325 
    Germany, 
    	wuerthen@math.uni-frankfurt.de}
\address{
    Heer Zhao, 
    Fakult\"at f\"ur Mathematik, 
    Universit\"at Duisburg-Essen, 
    Essen 45117, 
    Germany, 
    	heer.zhao@uni-due.de}
\begin{document}

\vspace{18mm} \setcounter{page}{1} \thispagestyle{empty}

\begin{abstract}
We first provide a detailed proof of Kato's classification theorem of log $p$-divisible groups over a noetherian henselian local ring. Exploring Kato's idea further, we then define the notion of a standard extension of a classical finite \'etale group scheme (resp. classical \'etale $p$-divisible group) by a classical finite flat group scheme (resp. classical $p$-divisible group) in the category of finite Kummer flat group log schemes (resp. log $p$-divisible groups), with respect to a given chart on the base. These results are then used to prove that log $p$-divisible groups are formally log smooth. We then study the finite Kummer flat group log schemes $T_n(\mathbf{M}):=H^{-1}(\mathbf{M}\otimes_{\Z}^L\Z/n\Z)$ (resp. the log $p$-divisible group $\mathbf{M}[p^{\infty}]$) of a log 1-motive $\mathbf{M}$ over an fs log scheme and show that they are \'etale locally standard extensions. Lastly, we give a proof of the Serre-Tate theorem for log abelian varieties with constant degeneration.
\end{abstract}

\maketitle

\section*{Introduction}
When studying degenerations of abelian varieties one is led to Kato's theory of finite Kummer flat group log schemes and log $p$-divisible groups (see \cite{kat4} and also \cite[App.]{zha3}\footnote{The terminology ``log finite flat group scheme'' in \cite[App.]{zha3} is replaced by ``finite Kummer flat group log schemes'' here which should be more suitable.} for a brief account of finite Kummer flat group log schemes). For instance, by a theorem of Kato (see \cite[Theorem 1.3]{zha5}), the $p$-divisible group of a semistable abelian variety over a complete discrete valuation field extends to a log $p$-divisible group over the corresponding discrete valuation ring. In this paper we will expand on some ideas developed in \cite{kat4} and connect them with the theory of log $1$-motives and log abelian varieties.

Let $S$ be an fs log scheme endowed with a suitable global chart, whose underlying scheme is the spectrum of a noetherian henselian local ring with positive residue characterisitic $p$. When studying finite Kummer flat group log schemes over $S$, one key point is to understand extensions of classical finite \'etale group schemes by  classical finite flat group schemes over $S$ in a logarithmic category, see \cite[the pragraph before Thm. A.3]{zha3} for an explanation of this. In \cite[Thm. 3.3]{kat4}, see also Theorem \ref{thm2.1}, Kato gives a description of such an extension in terms of a classical extension together with a certain monodromy datum. We call such extensions standard (see Definition \ref{defn2.1}).\\

As a corollary of Kato's theorem (Theorem \ref{thm2.1}), one then also gets a description of the extensions of a classical \'etale $p$-divisible group by a classical $p$-divisible group over $S$, see Theorem \ref{thm2.3}. After presenting the basic objects in the first section, we then in the second section present a complete proof to Kato's theorem following Kato's approach. In the procedure we explore Kato's idea further and define the notion of a standard extension of a classical finite \'etale group scheme (resp. classical \'etale $p$-divisible group) by a classical finite flat group scheme (resp. classical $p$-divisible group), see Definition \ref{defn2.1} (resp. Definition \ref{defn2.2}). Then Kato's results amount to saying that, over a noetherian henselian local fs log scheme admitting a global chart, any extension of a classical finite \'etale group scheme (resp. classical \'etale $p$-divisible group) by a classical finite flat group scheme (resp. classical $p$-divisible group) is always standard. Over a general base, we
get the following:
\begin{theorem*}(See Theorem \ref{thm2.2})
Let $S$ be a locally noetherian fs log scheme and let $G$ be a finite Kummer flat group log scheme over $S$, which is an extension of a classical finite \'etale group scheme by a classical finite flat group scheme. Then $G$ is a standard extension \'etale locally on $S$.
\end{theorem*}
Moreover, as an application of Kato's results, we can prove the following theorem for objects in the category $(\text{$p$-div}/S)^{\mr{log}}_{\mr{d}}$, which consists of log $p$-divisible groups whose dual is also a log $p$-divisible group. This is analogous to the formal smoothness of classical $p$-divisible groups (see \cite[Chapter II 3.3.13]{mes1}).

\begin{theorem*}(See Theorem \ref{thm2.4})
Let $S$ be a locally noetherian fs log scheme on which $p$ is locally nilpotent, and $H$ a log $p$-divisible group over $S$, which lies in $(\text{$p$-div}/S)^{\mr{log}}_{\mr{d}}$. Then $H$ is formally log smooth, i.e. for any strict closed square-zero thickening $T_0\hookrightarrow T$ in $(\mr{fs}/S)$, any element of $H(T_0)$ can be lifted to an element of $H(T)$ \'etale locally on $T$.
\end{theorem*}

In the third section, we study the finite Kummer flat group log scheme $$T_n(\mathbf{M}):=H^{-1}(\mathbf{M}\otimes_{\Z}^L\Z/n\Z)$$
for a log 1-motive $\mathbf{M}=[Y\xrightarrow{u}G_{\mr{log}}]$ over a locally noetherian fs log scheme $S$, as well as the log $p$-divisible group $\mathbf{M}[p^{\infty}]:=\varinjlim_n T_{p^n}(\mathbf{M})$ of $\mathbf{M}$. Let $T$ be the torus part of $G$ and let $X$ be the character group of $T$. The composition
\[Y\xrightarrow{u}G_{\mr{log}}\to G_{\mr{log}}/G\cong T_{\mr{log}}/T=\mc{H}om(X,\Gml/\Gm)\]
gives rise to a pairing $\langle-,-\rangle:X\times Y\to \Gml/\Gm$, see \cite[\S 2.3]{k-k-n2}. We call this pairing the \textbf{monodromy pairing of $\mathbf{M}$}. In Proposition \ref{prop3.4}, we show that \'etale locally the monodromy pairing of $\mathbf{M}$ gives rise to a canonical pseudo-monodromy of $T_n(\mathbf{M})$, as well as of $\mathbf{M}[p^\infty]$.

In the last section we turn to the Serre-Tate theorem for log abelian varieties with constant degeneration:
\begin{theorem*}(see Theorem \ref{thm5.1})
Let $S_{0}$ be a locally noetherian fs log scheme on which $p$ is locally nilpotent and let $S_{0}\subset S$ be a strict infinitesimal thickening of fs log schemes. Let $A_{0}$ be a log abelian variety with constant degeneration over $S_{0}$. Let $\mr{Def}_{A_{0}}(S)$ be the category of pairs $(A, \phi)$ where $A$ is log abelian variety over $S$, and $\phi:A_{S_{0}}\xrightarrow{\cong}A_{0}$ is an isomorphism. Similarly let $\mr{Def}_{A_{0}[p^{\infty}]}(S)$ be the category of pairs $(H, \psi)$, where $H$ is a log $p$-divisible group over $S$, with an identification $\psi:H_{S_{0}}\xrightarrow{\cong} A_{0}[p^{\infty}]$. \\
Then the functor $A\mapsto A[p^{\infty}]$, taking a log abelian variety over $S$ to its associated log $p$-divisible group, induces an equivalence of categories 
\begin{center}
$\mr{Def}_{A_{0}}(S)\leftrightarrow \mr{Def}_{A_{0}[p^{\infty}]}(S)$.
\end{center}
\end{theorem*}
For this we follow Drinfeld's approach for abelian varieties as in \cite[\S 1]{katz1}. The key point here is to verify the hypothesis about formal (log) smoothness from \cite[Lem. 1.1.3]{katz1} for log $p$-divisible groups in $(\text{$p$-div}/S)^{\mr{log}}_{\mr{d}}$ and log abelian varieties with constant degeneration.

The above theorem can also be regarded as a Serre-Tate theorem for pointwise polarizable log 1-motives. There is also a Serre-Tate theorem for classical $1$-motives over local Artin rings, which is the main theorem of \cite{b-m1}.

\section{Finite Kummer flat group log schemes and log $p$-divisible groups}

\subsection{Kummer log topologies}
Unless otherwise stated, we always denote by $S$ a locally noetherian fs log scheme. By log structure we always mean log structure on the classical \'etale site. We denote by $(\mr{fs}/S)$ the category of fs log schemes over $S$. We recall the Kummer log flat topology and the Kummer log \'etale topology on $(\mr{fs}/S)$, see \cite[Def. 2.3]{kat2} or \cite[Def. 2.13]{niz1}. 

\begin{defn}\label{defn1.1}
For $U\in (\mr{fs}/S)$, a family of morphisms $\{U_i\xrightarrow{f_i} U\}$ is called a \textbf{Kummer log flat cover} (resp. \textbf{Kummer log \'etale cover}), if the following are satisfied.
\begin{enumerate}[(1)]
\item Each $f_i$ is log flat (resp. log \'etale) and of Kummer type, and its underlying map of schemes is locally of finite presentation.
\item The family is set-theoretically surjective, i.e. $U=\bigcup_if_i(U_i)$.
\end{enumerate}
The \textbf{Kummer log flat topology} (resp. \textbf{Kummer log \'etale topology}) on $(\mr{fs}/S)$ is the Grothendieck topology given by the Kummer log flat cover (resp. Kummer log \'etale cover) on $(\mr{fs}/S)$. We will sometimes call the Kummer log flat (resp. Kummer log \'etale) topology simply the Kummer flat (resp. Kummer \'etale) topology, and denote the resulting site by $(\mr{fs}/S)_{\mr{kfl}}$ (resp. $(\mr{fs}/S)_{\mr{k\acute{e}t}}$).
\end{defn}

To see that these are indeed Grothendieck topology, we refer to \cite[\S 2]{kat2} and \cite[\S 2]{niz1}. 

By taking the strict flat covers, i.e. families of strict morphisms whose underlying maps of schemes form flat covers of schemes, one gets the classical flat site on $(\mr{fs}/S)$, denoted as $(\mr{fs}/S)_{\mr{fl}}$. Similarly, one also gets the classical \'etale site on $(\mr{fs}/S)$, denoted as $(\mr{fs}/S)_{\mr{\acute{e}t}}$. We have a natural ``forgetful'' map of sites
$$\varepsilon:(\mr{fs}/S)_{\mr{kfl}}\rightarrow (\mr{fs}/S)_{\mr{fl}}.$$
There is of course the \'etale version of the above forgetful map of sites, but we do not need it in this article. In order to shorten formulas, we will mostly abbreviate $\fsSet$ (resp. $\fsSket$, resp. $\fsSfl$, resp. $\fsSkfl$) as $\Set$ (resp. $\Sket$, resp. $\Sfl$, resp. $\Skfl$).

\begin{defn}\label{defn1.2}
Kato's multiplicative group (or the log multiplicative group) $\Gml$ is the sheaf on $\Set$ defined by $\Gml(U)=\Gamma(U,M^{\mr{gp}}_U)$
for any $U\in\fsS$, where $M_U$ denotes the log structure of $U$ and $M^{\mr{gp}}_U$ denotes the group envelope of $M_U$. 
\end{defn}

The \'etale sheaf $\Gml$ is also a sheaf on $\Skfl$, see \cite[Thm. 3.2]{kat2} or \cite[Cor. 2.22]{niz1}.

By convention, for any sheaf of abelian groups $F$ on $\Skfl$ and a subgroup sheaf $G$ of $F$ on $\Skfl$, we denote by $(F/G)_{\Set}$ (resp. $(F/G)_{\Sfl}$, resp. $(F/G)_{\Sket}$) the quotient sheaf on $\Set$ (resp. $\Sfl$, resp. $\Sket$), while $F/G$ denotes the quotient sheaf on $\Skfl$.

\subsection{Finite Kummer flat group log schemes and log $p$-divisible groups}

\begin{defn}\label{defn1.3}
The category $(\mr{fin}/S)_{\mr{c}}$ is the full subcategory of the category of sheaves of abelian groups over $(\mr{fs}/S)_{\mr{kfl}}$ consisting of objects which are representable by a classical finite flat group scheme over $S$. Here classical means that the log structure of the representing log scheme is induced from $S$.

The category $(\mr{fin}/S)_{\mr{f}}$ is the full subcategory of the category of sheaves of abelian groups over $(\mr{fs}/S)_{\mr{kfl}}$ consisting of objects which are representable by a classical finite flat group scheme over a Kummer log flat cover of $S$. For $F\in (\mr{fin}/S)_{\mr{f}}$, let $U\rightarrow S$ be a Kummer log flat cover of $S$ such that $F_U:=F\times_S U\in (\mr{fin}/U)_{\mr{c}}$. Then the rank of $F$ is defined to be  the rank of $F_U$ over $U$.

The category $(\mr{fin}/S)_{\mr{r}}$ is the full subcategory of $(\mr{fin}/S)_{\mr{f}}$ consisting of objects which are representable by a log scheme over $S$.

Let $F\in (\mr{fin}/S)_{\mr{f}}$, the Cartier dual of $F$ is the sheaf $F^*:=\mc{H}om_{S_{\mr{kfl}}}(F,\Gm)$. By the definition of $(\mr{fin}/S)_{\mr{f}}$, it is clear that $F^*\in (\mr{fin}/S)_{\mr{f}}$.

The category $(\mr{fin}/S)_{\mr{d}}$ is the full subcategory of $(\mr{fin}/S)_{\mr{r}}$ consisting of objects whose Cartier dual also lies in $(\mr{fin}/S)_{\mr{r}}$.
\end{defn}
The category $(\mr{fin}/S)_{\mr{r}}$ also has an alternative description as follows.

\begin{prop}\label{prop1.1}
Let $G$ be a sheaf of abelian groups on $(\mr{fs}/S)_{\mr{kfl}}$. Then we have $G\in (\mr{fin}/S)_{\mr{r}}$ if and only if $G$ satisfies the following condition.
\begin{enumerate}[($\star$)]
\item $G$ is representable by an fs log scheme such that the structure morphism $G\to S$ is Kummer log flat and its underlying morphism of schemes is finite.
\end{enumerate} 
\end{prop}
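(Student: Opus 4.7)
The plan is to verify the two implications separately. The forward direction is a descent argument, while the backward direction uses the local structure of Kummer log flat morphisms together with the fact that such morphisms can be made strict after a suitable Kummer flat cover.

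For the ``only if'' direction ($G\in(\mr{fin}/S)_{\mr{r}}\Rightarrow(\star)$), I start from the data provided by the definition: $G$ is represented by an fs log scheme $X$ over $S$, and there is a Kummer flat cover $\pi\colon U\to S$ such that $X_U:=X\times_S U$ is a classical finite flat group scheme over $U$. The morphism $X_U\to U$ is strict with underlying finite, flat, and of finite presentation, hence trivially Kummer log flat. I then descend both properties along $\pi$. Kummer log flatness of $X\to S$ follows from the fact that log flatness is Kummer flat local on the target, combined with the observation that the chart of the composite $X_U\to U\to S$ is of Kummer type, which forces the chart of $X\to S$ to be of Kummer type. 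For ``underlying finite'', I invoke the local description of Kummer flat covers (see \cite[\S2]{kat2} or \cite[\S2]{niz1}): strict \'etale locally on $S$ and $U$, $\pi$ is of the form $V\times_{\Spec\Z[Q]}\Spec\Z[P]\to V$ for a Kummer injection $Q\hookrightarrow P$ of fs monoids and a strict \'etale $V\to S$, whose underlying morphism of schemes is fppf. Standard fppf descent on underlying schemes then shows that the underlying morphism of $X\to S$ is finite.

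For the ``if'' direction ($(\star)\Rightarrow G\in(\mr{fin}/S)_{\mr{r}}$), I assume $G$ is represented by an fs log scheme $X$ over $S$ which is Kummer log flat with underlying finite morphism. The task is to produce a Kummer flat cover $U\to S$ over which $X$ becomes a classical finite flat group scheme. Working locally on $S$, I pick a chart $Q\to M_S$ for the log structure of $S$ together with a compatible Kummer chart $Q\hookrightarrow P$ of some exponent $n$ for $X\to S$, and take the Kummer flat cover $U:=S\times_{\Spec\Z[Q]}\Spec\Z[Q^{1/n}]\to S$. By the local structure of Kummer log flat morphisms, $X\times_S U\to U$ becomes strict over this cover; combined with underlying finite (which is preserved by base change), $X\times_S U$ is a classical finite flat scheme over $U$. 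The group structure on $G$ as a sheaf of abelian groups endows $X\times_S U$ with a compatible group scheme structure, so $G\in(\mr{fin}/S)_{\mr{f}}$, and hence $G\in(\mr{fin}/S)_{\mr{r}}$ since $G$ was already assumed representable by a log scheme.

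The main obstacle I anticipate is handling the descent of ``underlying finite'' in the forward direction with sufficient care: Kummer flat covers are not underlying fppf in general, and one really needs the explicit \'etale-local description of Kummer flat morphisms to reduce to ordinary fppf descent of schemes. A secondary but more delicate point is producing, in the backward direction, a Kummer flat cover that genuinely trivializes the Kummer part of the log structure of $X$ over $S$, which requires a careful choice of chart and of $n$-th root extraction.
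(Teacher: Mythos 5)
Your ``if'' direction is essentially the paper's argument: reduce to $S$ quasi-compact with a global chart, extract roots of the chart to produce a Kummer flat cover over which $G$ becomes strict (this is \cite[Thm.~2.7~(2)]{kat2}), and then use that a strict Kummer log flat morphism is classically flat together with stability of ``underlying finite'' under fs base change. The only thing to make explicit there is that strictness plus log flatness gives classical flatness of the underlying morphism (\cite[Lem.~4.3.1]{k-s1}), and that quasi-compactness of $G$ is what lets a single exponent $n$ work globally.

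The genuine gap is in your ``only if'' direction, precisely at the point you flag as the main obstacle: the descent of finiteness of the underlying morphism along a Kummer log flat cover. Your proposed reduction to ordinary fppf descent fails for two reasons. First, the underlying morphism of schemes of a Kummer log flat cover of the standard local form $V\times_{\Spec\Z[Q]}\Spec\Z[P]\to V$ is in general \emph{not} flat: already for $Q=\langle(2,0),(1,1),(0,2)\rangle\subset\N^2$ the $n=2$ root cover $\Z[Q]\to\Z[\tfrac12Q]$ has generic rank $4$ but special fibre of dimension $6$, so it is finite surjective but not fppf. Second, even after further localization, the underlying scheme of the fs fibre product $X\times_S U$ is not the scheme-theoretic fibre product $\mathring{X}\times_{\mathring{S}}\mathring{U}$ (one must integralize and saturate), so knowing that $\mathring{X\times_SU}\to\mathring{U}$ is finite is not the hypothesis that fppf descent of ``finite'' for schemes requires. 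This is exactly why the paper does not argue this way: it invokes Proposition \ref{propC.1}, whose proof occupies Appendix C and proceeds by descending quasi-finiteness (Nakayama's fourth point lemma), universal closedness (using that Kummer log flat covers are open and surjective on topological spaces), separatedness (a diagonal argument), and local finite presentation (via the stacks $\mc{L}og$ and \cite{i-n-t1}) separately, then reassembling finiteness from these. Without some such argument --- or an explicit appeal to Kato's \cite[Thm.~7.1]{kat2}/Tani's thesis --- your proof of this implication is incomplete. The remaining ingredients of that direction (descent of log flatness via \cite[Thm.~0.1]{i-n-t1} and of the Kummer property via \cite[Prop.~2.7~(1)]{kat2}) match the paper.
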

\begin{proof}
Assume that $G$ satisfied the condition ($\star$). We want to show that $G\in (\mr{fin}/S)_{\mr{r}}$. As the problem is classically \'etale local on $S$, we can assume that $S$ is quasi-compact and that the log structure on $S$ admits a global chart. Moreover, if $S$ is quasi-compact, so is $G$. Hence by \cite[Theorem 2.7 (2)]{kat2} there is a Kummer log flat cover $S'\to S$ such that $G_{S'}\to S'$ is strict. By \cite[Lem. 4.3.1]{k-s1}, the morphism $G_{S'}\to S'$ is classically flat. The underlying morphism of schemes of $G_{S'}\to S'$ is also finite by \cite[1.10]{nak1}. Hence $G_{S'}\to S'$ is a classical finite flat group scheme over $S'$. It follows that $G\in (\mr{fin}/S)_{\mr{r}}$.

Conversely assume that $G\in (\mr{fin}/S)_{\mr{r}}$, i.e. there is a Kummer log flat cover $S'\to S$ such that $G_{S'}\to S'$ is a classical finite flat group scheme. The property of being log-flat descends along Kummer log flat covers of the base, by \cite[Thm. 0.1]{i-n-t1}. Hence $G$ is log flat over $S$. Moreover by \cite[Prop. 2.7 (1)]{kat2} $G\to S$ is of Kummer type. To finish the proof, we are left with showing that the underlying map of schemes of $G\to S$ is finite, which follows from Proposition \ref{propC.1}\footnote{This is actually contained in \cite[Thm. 7.1]{kat2}. However the proof for the descent of finiteness there is referred to Tani's thesis \cite{tan1} which is in Japanese. So we present a proof in Proposition \ref{propC.1}.}. 
\end{proof}

\begin{defn}
Due to Proposition \ref{prop1.1}, we call an object of $(\mr{fin}/S)_{\mr{r}}$ a \textbf{finite Kummer log flat group log scheme}, or simply \textbf{finite Kummer flat group log scheme}, or even a \textbf{finite kfl group log scheme}.
\end{defn}

\begin{defn}\label{defn1.5}
Let $p$ be a prime number. A log $p$-divisible group over $S$ is a sheaf of abelian groups $G$ on $(\mr{fs}/S)_{\mr{kfl}}$ satisfying:
\begin{enumerate}[(1)]
\item $G=\varinjlim_{n}G_n$ with $G_n:=\mr{ker}(p^n:G\rightarrow G)$;
\item $p:G\rightarrow G$ is surjective;
\item $G_n\in (\mr{fin}/S)_{\mr{r}}$ for any $n> 0$.
\end{enumerate}
We denote the category of log $p$-divisible groups over $S$ by $(\text{$p$-div}/S)^{\mr{log}}_{\mr{r}}$. The full subcategory of $(\text{$p$-div}/S)^{\mr{log}}_{\mr{r}}$ consisting of objects $G$ with $G_n\in (\mr{fin}/S)_{\mr{d}}$ for $n>0$ will be denoted by $(\text{$p$-div}/S)^{\mr{log}}_{\mr{d}}$. A log $p$-divisible group $G$ with $G_n\in (\mr{fin}/S)_{\mr{c}}$ for $n>0$ is clearly just a classical $p$-divisible group, and we denote the full subcategory of $(\text{$p$-div}/S)^{\mr{log}}_{\mr{d}}$ consisting of classical $p$-divisible groups by $(\text{$p$-div}/S)^{\mr{log}}_{\mr{c}}$.
\end{defn}

\begin{rmk}
In Definition \ref{defn1.5}, one can replace the condition (3) by simply requiring $G_1\in (\mr{fin}/S)_{\mr{r}}$. This follows from the fact that the category $(\mr{fin}/S)_{\mr{r}}$ is closed under extension in the category of sheaves of abelian groups on $(\mr{fs}/S)_{\mr{kfl}}$, see \cite[Prop. A.1]{zha3}. Similarly, a log $p$-divisible group $G$ lies in $(\text{$p$-div}/S)^{\mr{log}}_{\mr{d}}$ if $G_1\in (\mr{fin}/S)_{\mr{d}}$, this also follows from \cite[Prop. A.1]{zha3}. Note that the corresponding statement for $(\text{$p$-div}/S)^{\mr{log}}_{\mr{c}}$ does not hold.
\end{rmk}

\section{Kato's classification theorem of log $p$-divisible groups}

\subsection{Standard extensions of log finite flat group objects}\label{subsec2.1}
In this subsection, we further assume that $S$ admits a global chart $P\rightarrow M_S$ with $P$ an fs monoid. Throughout the paper, if $Q$ is a monoid, we also denote by $Q$ the constant sheaf associated to $Q$.

Let $F''\in (\mr{fin}/S)_{\mr{c}}$ be \'etale, let $F'\in (\mr{fin}/S)_{\mr{c}}$, and let $n$ be a positive integer which kills both $F'$ and $F''$. We denote by $$\mathfrak{Ext}_{S_{\mr{kfl}}}(F'',F')\quad (\text{resp. }\mathfrak{Ext}_{S_{\mr{fl}}}(F'',F'))$$
the category of extensions of $F''$ by $F'$ in $(\mr{fin}/S)_{\mr{r}}$ (resp. $(\mr{fin}/S)_{\mr{c}}$). Let $F''(1):=F''\otimes_{\Z/n\Z}\Z/n\Z(1)$, and we denote by 
$$\mathfrak{Hom}(F''(1),F')\otimes_{\Z}P^{\mr{gp}}$$
the discrete category associated to the set $\mr{Hom}_S(F''(1),F')\otimes_{\Z}P^{\mr{gp}}$. There is a natural functor
\begin{equation}\label{eq2.1}
\Phi:\mathfrak{Ext}_{S_{\mr{fl}}}(F'',F')\times \mathfrak{Hom}(F''(1),F')\otimes_{\Z}P^{\mr{gp}}\rightarrow \mathfrak{Ext}_{S_{\mr{kfl}}}(F'',F')
\end{equation}
constructed as follows.

Firstly we construct a functor 
\begin{equation}\label{eq2.2}
\Phi_2:\mathfrak{Hom}(F''(1),F')\otimes_{\Z}P^{\mr{gp}}\rightarrow \mathfrak{Ext}_{S_{\mr{kfl}}}(F'',F').
\end{equation}
For every element $a\in P^{\mr{gp}}$, let $\mathbf{M}_a$ be the log 1-motive $[\Z\xrightarrow{1\mapsto a}\Gml]$. Then $E_a:=H^{-1}(\mathbf{M}_a\otimes^{L}_{\Z}\Z/n\Z)$ fits into a short exact sequence $$0\rightarrow\Z/n\Z(1)\rightarrow E_a\rightarrow\Z/n\Z\rightarrow0,$$
which splits Kummer flat locally. Hence we get another short exact sequence 
$$0\rightarrow F''(1)\rightarrow E_a\otimes_{\Z/n\Z} F''\rightarrow F''\rightarrow0$$
after tensoring with $F''$. For any $N\in\mr{Hom}_S(F''(1),F')$, we define $\Phi_2(N\otimes a)$ as the push-out
\begin{equation}\label{def-phi2}
\xymatrix{
0\ar[r] &F''(1)\ar[r]\ar[d]^N &E_a\otimes_{\Z/n\Z} F''\ar[r]\ar[d] &F''\ar[r]\ar@{=}[d] &0 \\
0\ar[r] &F'\ar[r] &N_*(E_a\otimes_{\Z/n\Z} F'')\ar[r] &F''\ar[r] &0
}.
\end{equation}
Now for any $\beta=\sum_iN_i\otimes a_i\in \mr{Hom}_S(F''(1),F')\otimes_{\Z}P^{\mr{gp}}$, we define $\Phi_2(\beta)\in\mathfrak{Ext}_{S_{\mr{kfl}}}(F'',F')$ as the Baer sum of the extensions $\Phi_2(N_i\otimes a_i)$. Now the functor $\Phi$ is defined as
$$\Phi(F^{\mr{cl}},\beta):=F^{\mr{cl}}+_{\mr{Baer}}\Phi_2(\beta)$$
for $F^{\mr{cl}}\in \mathfrak{Ext}_{S_{\mr{fl}}}(F'',F')$ and $\beta\in \mr{Hom}_S(F''(1),F')\otimes_{\Z}P^{\mr{gp}}$. Here the sum $+_{\mr{Baer}}$ denotes the Baer sum.

\begin{defn}\label{defn2.1}
An extension $F$ of $F''$ by $F'$ in the category $(\mr{fin}/S)_{\mr{r}}$ is called \textbf{standard with respect to the given chart}, if it lies in the essential image of the functor $\Phi$. We denote by $\mathfrak{Ext}_{S_{\mr{kfl}}}(F'',F')_{\mr{std}}$ the full subcategory of $\mathfrak{Ext}_{S_{\mr{kfl}}}(F'',F')$ consisting of standard extensions. Then the functor $\Phi$ induces a functor
\begin{equation}\label{eq2.3}
\Phi_{\mr{std}}:\mathfrak{Ext}_{S_{\mr{fl}}}(F'',F')\times \mathfrak{Hom}(F''(1),F')\otimes_{\Z}P^{\mr{gp}}\rightarrow \mathfrak{Ext}_{S_{\mr{kfl}}}(F'',F')_{\mr{std}}.
\end{equation}
For any $F=\Phi(F^{\mr{cl}},\beta)$ in $\mathfrak{Ext}_{S_{\mr{kfl}}}(F'',F')_{\mr{std}}$, we call $\beta$ a \textbf{pseudo-monodromy} of the extension $F$.  If the functor above is an equivalence of categories, we call $\beta$ a \textbf{monodromy} of $F$.
\end{defn}

In general, not every extension is standard, and a standard extension $F$ admits more than one pseudo-monodromy. The name pseudo-monodromy comes from the fact, that for arbitrary charts, the decompositions one gets might be strange. In particular a purely classical group scheme might have non-zero pseudo-momodromy. 

Now we are going to investigate the essential image of the functor $\Phi$.

Given an element $0\rightarrow F'\rightarrow F\rightarrow F''\rightarrow0$ in $\mathfrak{Ext}_{S_{\mr{kfl}}}(F'',F')$, applying the direct image functor $\varepsilon_*$, we get a long exact sequence
\begin{equation}\label{eq2.4}
0\rightarrow F'\rightarrow F\rightarrow F''\xrightarrow{\delta_F} R^1\varepsilon_* F'.
\end{equation}
Note that if $F=\Phi(F^{\mr{cl}}, \beta)$, we have $\delta_{F^{\mr{cl}}}=0$, so the connecting map $\delta_{F}$ only depends on $\beta$.
By \cite[Thm. 3.12]{niz1} or \cite[Thm.4.1]{kat2}, we have 
\[R^1\varepsilon_* F'=W\otimes_{\Z}(\Gml/\Gm)_{S_{\mr{fl}}}\] with 
\[W:=\varinjlim_r\mc{H}om_{S}(\Z/r\Z(1),F').\]
Note that $W=\mc{H}om_{S}(\Z/n\Z(1),F')$, since $F'$ is $n$-torsion. Then the homomorphism $\delta_F$ can be rewritten as $\delta_F: F''\rightarrow W\otimes_{\Z}(\Gml/\Gm)_{S_{\mr{fl}}}$, which is an element of the group $\mr{Hom}_S(F'',W\otimes_{\Z}(\Gml/\Gm)_{S_{\mr{fl}}})$. At the same time, the chart $P\rightarrow M_S$ induces a canonical homomorphism $\gamma_{P}:P^{\mr{gp}}\rightarrow (\Gml/\Gm)_{S_{\mr{fl}}}$, where $P^{\gp}$ denotes the group envelope of the monoid $P$. Therefore, we have a canonical homomorphism 
\begin{equation}\label{map}
\mr{id}_{W}\otimes \gamma_{P}:W\otimes_{\Z} P^{\gp}\rightarrow W\otimes_{\Z}(\Gml/\Gm)_{S_{\mr{fl}}}=R^1\varepsilon_*F'.
\end{equation}

\begin{rmk}
In \cite[\S 3.7]{kat4}, 
$$R^1\varepsilon_*F'=\varinjlim_r\mc{H}om_S(\Z/r\Z(1),F')\otimes (\Gml/\Gm)_{S_{\mr{fl}}}$$
is simply replaced by 
$$\varinjlim_r\mc{H}om_S(\Z/r\Z(1),F')\otimes P^{\mr{gp}}$$ 
without further explanation. From the authors' point of view, this is not completely correct, as $(\Gml/\Gm)_{S_{\mr{fl}}}$ is not constant, even in the henselian local case. This is one reason why we present a complete treatment to Kato's Theorem \ref{thm2.1}.
\end{rmk}
We have a canonical identification
\begin{equation}\label{ident}
\mr{Hom}_S(F'',W\otimes_{\Z}P^{\gp})=\mr{Hom}_S(F'',W)\otimes_{\Z}P^{\mr{gp}}=\mr{Hom}_S(F''(1),F')\otimes_{\Z}P^{\mr{gp}}.
\end{equation}
\begin{lem}\label{lem2.1}
Let $\beta\in \mr{Hom}_S(F''(1),F')\otimes_{\Z}P^{\mr{gp}}$, and let $F_{\beta}:=\Phi_2(\beta)$. Applying the functor $\varepsilon_*$ to the short exact sequence $0\rightarrow F'\rightarrow F_{\beta}\rightarrow F''\rightarrow0$, gives rise to a long exact sequence $0\rightarrow F'\rightarrow F_{\beta}\rightarrow F''\xrightarrow{\delta_{F_{\beta}}}R^1\varepsilon_*F'$. Let $\bar{\delta}_{F_{\beta}}$ be the homomorphism corresponding to $\beta$ under the canonical identification (\ref{ident}).
Then the homomorphism $\delta_{F_{\beta}}$ factors as 
$$\xymatrix{
&W\otimes_{\Z} P^{\gp}\ar[d]^{\mr{id}_{W}\otimes \gamma_{P}}  \\
F''\ar[r]^{\delta_{F_{\beta}}}\ar@{-->}[ru]^{\bar{\delta}_{F_{\beta}}} &R^1\varepsilon_*F'
}.$$
\end{lem}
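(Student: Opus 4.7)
The plan is to reduce to a ``universal'' extension $E_a$ by linearity and functoriality, then carry out the explicit \v{C}ech cocycle computation of its connecting map.

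\textbf{Reduction to pure tensors.} Both assignments $\beta\mapsto\delta_{F_\beta}$ and $\beta\mapsto\bar\delta_{F_\beta}$ are additive in $\beta$: by the Baer-sum construction of $\Phi_2$ in Definition \ref{defn2.1}, $\delta_{F_\beta}=\sum_i\delta_{F_{N_i\otimes a_i}}$ whenever $\beta=\sum_iN_i\otimes a_i$, and additivity of $\bar\delta_{F_\beta}$ is immediate from its definition via the canonical isomorphism. It therefore suffices to prove the factorization for a pure tensor $\beta=N\otimes a$ with $N\in\mr{Hom}_S(F''(1),F')$ and $a\in P^{\mr{gp}}$.

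\textbf{Reduction to the universal case.} By construction, $F_{N\otimes a}$ is the pushout of the extension $E_a\otimes_{\Z/n\Z}F''$ along $N\colon F''(1)\to F'$. Since $E_a$ is $n$-torsion, the functor $-\otimes_{\Z/n\Z}F''$ is exact, and since $F''$ is classical \'etale it is locally constant with $\Z/n\Z$-free stalks, so this functor commutes with both $\varepsilon_*$ and $R^1\varepsilon_*$. Naturality of the connecting map then yields
\begin{equation*}
\delta_{F_{N\otimes a}}\;=\;R^1\varepsilon_*(N)\circ\bigl(\delta_{E_a}\otimes_{\Z/n\Z}\mr{id}_{F''}\bigr).
\end{equation*}
Under the identification $R^1\varepsilon_*F'=W\otimes_\Z(\Gml/\Gm)_{\Sfl}$, this reduces the lemma to the universal statement that $\delta_{E_a}\colon\Z/n\Z\to R^1\varepsilon_*\mu_n=\Z/n\Z\otimes_{\Z}(\Gml/\Gm)_{\Sfl}$ factors through the chart-induced arrow $\Z/n\Z\otimes_{\Z}P^{\mr{gp}}\to\Z/n\Z\otimes_{\Z}(\Gml/\Gm)_{\Sfl}$ and sends $1\in\Z/n\Z$ to $1\otimes a$.

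\textbf{Computing $\delta_{E_a}$.} Unwinding the total complex of $M_a\otimes^L_{\Z}\Z/n\Z$, a section of $E_a$ is the class of a pair $(y,g)\in\Z\oplus\Gml$ with $g^n=a^y$, modulo the image of $x\mapsto(nx,a^x)$; the embedding $\mu_n\hookrightarrow E_a$ and the projection $E_a\twoheadrightarrow\Z/n\Z$ are $\zeta\mapsto(0,\zeta)$ and $(y,g)\mapsto y\bmod n$. To compute $\delta_{E_a}(1)$, choose Kummer flat locally an $n$-th root $g$ of $a\in\Gml$, yielding the local lift $(1,g)$ of $1\in\Z/n\Z$; the resulting \v{C}ech $1$-cocycle $g_\alpha/g_\beta\in\mu_n$ on overlaps represents $\delta_{E_a}(1)\in R^1\varepsilon_*\mu_n$, and is exactly the image of $a$ under the boundary map of the Kummer sequence $0\to\mu_n\to\Gml\xrightarrow{n}\Gml\to 0$. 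The crux is to match this \v{C}ech class with the canonical isomorphism $R^1\varepsilon_*\mu_n\cong\Z/n\Z\otimes_{\Z}(\Gml/\Gm)_{\Sfl}$ from \cite[Thm. 3.12]{niz1}: under this identification the class becomes $1\otimes\bar a$, where $\bar a$ is the image of $a\in P^{\mr{gp}}$ in $(\Gml/\Gm)_{\Sfl}$ via the chart $P\to M_S$.

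\textbf{Main obstacle.} The delicate part is the compatibility step in the last paragraph, namely pinning down the identification $R^1\varepsilon_*\mu_n\cong\Z/n\Z\otimes_{\Z}(\Gml/\Gm)_{\Sfl}$ explicitly on cocycle representatives, and verifying that the chart $P\to M_S$ induces exactly the map $P^{\mr{gp}}\to(\Gml/\Gm)_{\Sfl}$ that sends $a\mapsto\bar a$ appearing in the cocycle. Once this universal computation is secured, Steps 1 and 2 propagate the factorization to arbitrary $\beta$.
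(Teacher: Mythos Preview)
Your approach is correct and essentially identical to the paper's: both reduce by linearity to a pure tensor $N\otimes a$, then by pushout functoriality to the universal extension $E_a$, and finally compute $\delta_{E_a}(\bar 1)=\bar 1\otimes a$ (which the paper simply asserts and which you rightly flag as the point needing care). One minor slip: the stalks of an \'etale $n$-torsion $F''$ need not be $\Z/n\Z$-free (e.g.\ $F''=\Z/p\Z$ with $n=p^2$), but the commutation you want follows anyway from the Kummer-flat-local splitting of $E_a\to\Z/n\Z$ together with the \'etaleness of $F''$, exactly as the paper uses it.
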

\begin{proof}
It suffices to consider the case $\beta=N\otimes a\in\mr{Hom}_S(F''(1),F')\otimes_{\Z}P^{\mr{gp}}$. 

We abbreviate $(\Gml/\Gm)_{S_{\mr{fl}}}$ as $\mc{G}$ in order to shorten formulas. Applying the functor $\varepsilon_*$ to the short exact sequence 
\[0\rightarrow\Z/n\Z(1)\rightarrow E_a\rightarrow\Z/n\Z\rightarrow0,\]
we get a map
$$\delta_{E_{a}}:\Z/n\Z\rightarrow R^1\varepsilon_*\Z/n\Z(1)=\mc{H}om_S(\Z/n\Z(1),\Z/n\Z(1))\otimes_{\Z}\mc{G},\bar{1}\mapsto \bar{1}\otimes a.$$
Applying the functor $\varepsilon_*$ to $0\rightarrow F''(1)\rightarrow E_a\otimes_{\Z/n\Z} F''\rightarrow F''\rightarrow0$, we get another map
$$\delta_{E_{a}\otimes_{\Z/n\Z} F''} :F''=\Z/n\Z\otimes_{\Z/n\Z}F''\rightarrow R^1\varepsilon_*F''(1)=R^1\varepsilon_*\Z/n\Z(1)\otimes_{\Z/n\Z}F''.$$
The map $\delta_{E_{a}\otimes_{\Z/n\Z} F''}$ is identical to the map $\delta_{E_{a}}\otimes 1_{F''}$. Applying the functor $\varepsilon_*$ to the commutative diagram (\ref{def-phi2}), we get the following commutative diagram
$$\xymatrix{
F''\ar[rr]^-{\delta_{E_{a}\otimes_{\Z/n\Z} F''}}\ar@{=}[d] &&R^1\varepsilon_*F''(1)\ar[d]^{R^1\varepsilon_*N}\ar@{=}[r] &\mc{H}om_S(\Z/n\Z(1),F''(1))\otimes_{\Z}\mc{G}\ar[d]^{N_*\otimes\mr{id}_{\mc{G}}}\\
F''\ar[rr]^-{\delta_{F_{\beta}}} &&R^1\varepsilon_*F'\ar@{=}[r] &\mc{H}om_S(\Z/n\Z(1),F')\otimes_{\Z}\mc{G}  
}.$$
It is clear that $\delta_{E_{a}}$ factors through 
$$\bar{\delta}_{E_{a}}:\Z/n\Z\rightarrow \mc{H}om_S(\Z/n\Z(1),\Z/n\Z(1))\otimes_{\Z}P^{\gp},\,\bar{1}\mapsto \bar{1}\otimes a .$$
Therefore $\delta_{E_{a}\otimes_{\Z/n\Z} F''}$ factors through 
$$\bar{\delta}_{E_{a}\otimes_{\Z/n\Z} F''}:F''=\Z/n\Z\otimes_{\Z/n\Z}F''\rightarrow \mc{H}om_S(\Z/n\Z(1),\Z/n\Z(1))\otimes_{\Z}P^{\gp}\otimes_{\Z/n\Z}F'',$$
where $\bar{\delta}_{E_{a}\otimes_{\Z/n\Z} F''}$ is defined as the map $\bar{\delta}_{E_{a}}\otimes 1_{F''}$. We can rewrite $\bar{\delta}_{E_{a}\otimes_{\Z/n\Z} F''}$ as 
\[F''\rightarrow\mc{H}om_S(\Z/n\Z(1),F''(1))\otimes_{\Z}P^{\gp}.\] 
We have an obvious commutative diagram
$$\xymatrix{
\mc{H}om_S(\Z/n\Z(1),F''(1))\otimes_{\Z}P^{\gp}\ar[r]\ar[d]^{N_*\otimes\mr{id}_{P^{\gp}}} &R^1\varepsilon_*F''(1)\ar[d]^{R^1\varepsilon_*N} \\
\mc{H}om_S(\Z/n\Z(1),F')\otimes_{\Z}P^{\gp}\ar[r] &R^1\varepsilon_*F'
}.$$
It follows that $\delta_{F_{\beta}}$ factors through $\bar{\delta}_{F_{\beta}}:=(N_*\otimes\mr{id}_{P^{\gp}})\circ\bar{\delta}_{E_{a}\otimes_{\Z/n\Z} F''}$. This finishes the proof.
\end{proof}

\begin{rmk}
In the proof of Lemma \ref{lem2.1}, the map 
$$\bar{\delta}_{E_{a}\otimes_{\Z/n\Z} F''}:F''=F''\otimes_{\Z}\Z\rightarrow \mc{H}om_S(\Z/n\Z(1),F''(1))\otimes_{\Z}P^{\gp}$$
corresponds to 
\[\mr{id}_{F''(1)}\in\mr{Hom}_S(F'',\mc{H}om_S(\Z/n\Z(1),F''(1)))=\mr{Hom}_S(F''(1),F''(1))\]
and $\Z\rightarrow P^{\mr{gp}},1\mapsto a$. Therefore $(N_*\otimes\mr{id}_{P^{\gp}})\circ\bar{\delta}_{E_{a}\otimes_{\Z/n\Z} F''}$ is nothing but 
$$N\otimes_{\Z}a\in \mr{Hom}_S(F''(1),F')\otimes_{\Z}P^{\mr{gp}}.$$
\end{rmk}

\begin{prop}\label{prop2.1}
Let $F$ be an extension of $F''$ by $F'$ in the category $(\mr{fin}/S)_{\mr{r}}$. Then  $F$ is a standard extension with respect to the given chart if and only if the homomorphism $\delta_F$ from (\ref{eq2.4}) factors 
$$\xymatrix{
&&&&W\otimes P^{\mr{gp}}\ar[d]  \\
0\ar[r] &F'\ar[r] &F\ar[r] &F''\ar[r]^{\delta_F}\ar@{-->}[ru]^{\bar{\delta}_F} &R^1\varepsilon_*F'
}$$
through the canonical homomorphism $W\otimes P^{\mr{gp}}\rightarrow W\otimes_{\Z}(\Gml/\Gm)_{S_{\mr{fl}}}=R^1\varepsilon_*F'$.
\end{prop}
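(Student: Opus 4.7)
The proof naturally splits into two directions, both resting on two ingredients: (a) the additivity of the connecting map $\delta_{\bullet}$ under Baer sums of extensions, which is just the statement that the map $\mr{Ext}^1_{S_{\mr{kfl}}}(F'',F') \to \mr{Hom}_S(F'',R^1\varepsilon_*F')$ from the five-term sequence is a group homomorphism, and (b) the observation that $\delta_{\varepsilon^*F^{\mr{cl}}}=0$ whenever $F^{\mr{cl}}$ comes from a classical extension in $\mathfrak{Ext}_{S_{\mr{fl}}}(F'',F')$, since then the Kummer flat sequence is already exact after $\varepsilon_*$. Combining these with Lemma \ref{lem2.1} immediately gives the ``only if'' direction: for $F=\Phi(F^{\mr{cl}},\beta)=F^{\mr{cl}}+_{\mr{Baer}}\Phi_2(\beta)$ one has $\delta_F=\delta_{F^{\mr{cl}}}+\delta_{\Phi_2(\beta)}=\delta_{\Phi_2(\beta)}$, and the latter factors through $W\otimes_{\Z} P^{\mr{gp}}$ by Lemma \ref{lem2.1}.

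For the ``if'' direction I would run the argument backwards. Given a factorization $\bar{\delta}_F\colon F''\rightarrow W\otimes_{\Z} P^{\mr{gp}}$, I use the canonical identification
\[
\mr{Hom}_S(F'',W\otimes_{\Z}P^{\mr{gp}})=\mr{Hom}_S(F'',W)\otimes_{\Z}P^{\mr{gp}}=\mr{Hom}_S(F''(1),F')\otimes_{\Z}P^{\mr{gp}},
\]
valid because $F''$ is killed by $n$ so that only the $n$-torsion of $W=\varinjlim_r\mc{H}om_S(\Z/r\Z(1),F')$ contributes, to produce an element $\beta\in\mr{Hom}_S(F''(1),F')\otimes_{\Z}P^{\mr{gp}}$ matching $\bar{\delta}_F$. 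By Lemma \ref{lem2.1} one then has $\bar{\delta}_{\Phi_2(\beta)}=\bar{\delta}_F$, and in particular $\delta_{\Phi_2(\beta)}=\delta_F$. Form the Baer difference $G:=F-_{\mr{Baer}}\Phi_2(\beta)$ in $(\mr{fin}/S)_{\mr{r}}$; additivity of $\delta_\bullet$ yields $\delta_G=0$, so applying $\varepsilon_*$ to $0\to F'\to G\to F''\to 0$ gives an exact sequence of sheaves on $S_{\mr{fl}}$. I would then set $F^{\mr{cl}}:=\varepsilon_*G$ and check, via a five-lemma comparison applied to the adjunction unit $\varepsilon^*\varepsilon_*G\to G$ using that $\varepsilon^*\varepsilon_*F'=F'$ and $\varepsilon^*\varepsilon_*F''=F''$, that $\varepsilon^*F^{\mr{cl}}=G$. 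This rewrites $F=F^{\mr{cl}}+_{\mr{Baer}}\Phi_2(\beta)=\Phi(F^{\mr{cl}},\beta)$, proving standardness.

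\textbf{Main obstacle.} The subtle step is the final descent: to apply $\Phi$ we need $F^{\mr{cl}}=\varepsilon_*G$ not merely as an abstract sheaf extension in $S_{\mr{fl}}$ but as an object of $\mathfrak{Ext}_{S_{\mr{fl}}}(F'',F')$, i.e.\ as an element of $(\mr{fin}/S)_{\mr{c}}$. This requires invoking that classical finite flat group schemes are closed under extensions on the fppf site, together with the fact that $G\in(\mr{fin}/S)_{\mr{r}}$ was already representable and of finite underlying scheme, so the vanishing $\delta_G=0$ forces the log structure on $G$ to be the one pulled back from $S$. Once this descent is in place, the rest of the argument is a mechanical bookkeeping with Baer sums and the identifications above.
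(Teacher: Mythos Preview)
Your proposal is correct and follows essentially the same approach as the paper. The paper's proof is terser: it defines $F^{\mathrm{cl}}:=F-_{\mathrm{Baer}}\Phi_2(\bar{\delta}_F)$, observes via Lemma~\ref{lem2.1} and additivity that its connecting map vanishes, and then simply asserts that $F^{\mathrm{cl}}\in\mathfrak{Ext}_{S_{\mathrm{fl}}}(F'',F')$ without further comment. You correctly identify this last step as the one place requiring care, and your five-lemma argument comparing $\varepsilon^*\varepsilon_*G\to G$ (using exactness of $\varepsilon^*$ and that $F',F''$ are already classical) is a clean way to justify what the paper leaves implicit. The alternative heuristic you mention at the end, that $\delta_G=0$ should force the log structure on $G$ to be strict, is morally the same statement but is not an independent argument; your five-lemma reasoning is what actually does the work.
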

\begin{proof}
The ``only-if'' part follows from Lemma \ref{lem2.1} easily. Now we assume that $\delta_F$ has a factorization as above. We want to express $F$ as a standard extension. 

First we digress to investigate the behavior of the map (\ref{eq2.5}) with respect to Baer sums. Let $F_1$, $F_2$ be two extensions of $F''$ by $F'$. The following commutative diagram
\[\xymatrix{0\ar[r] &F'\oplus F'\ar[r]\ar[d]_{+} &F_1\oplus F_2\ar[r]\ar[d] &F''\oplus F''\ar[r]^-{\delta_{F_1}\oplus\delta_{F_2}}\ar@{=}[d] &R^1\varepsilon_*F'\oplus R^1\varepsilon_*F''\ar[d]^{+} \\
0\ar[r] &F'\ar[r] &+_*(F_1\oplus F_2)\ar[r] &F''\oplus F''\ar[r] &R^1\varepsilon_*F' \\
0\ar[r] &F'\ar[r]\ar@{=}[u] &F_1+_{\mr{Baer}} F_2\ar[r]\ar[u] &F''\ar[r]^-{\delta_{F_1+_{\mr{Baer}}F_2}}\ar[u]_{\mr{diagonal}} &R^1\varepsilon_*F'\ar@{=}[u] \\
}\] 
implies that $\delta_{F_1+_{\mr{Baer}}F_2}=\delta_{F_1}+\delta_{F_2}$, i.e. the formation of $\delta_{-}$ is compatible with Baer sum.

Now we go back to the proof of the proposition. By abuse of notation, we use the same notation $\bar{\delta}_{F}$ for the element in $\mr{Hom}_S(F''(1),F')\otimes_{\Z}P^{\mr{gp}}$ corresponding to $\bar{\delta}_{F}$ in $\mr{Hom}_S(F'',W\otimes_{\Z}P^{\mr{gp}})$. Let $F_{\bar{\delta}_F}:=\Phi_2(\bar{\delta}_F)$, and let $F^{\mr{cl}}:=F-_{\mr{Baer}}F_{\bar{\delta}_F}$. By Lemma \ref{lem2.1}, the connection map $F''\rightarrow R^1\varepsilon_*F'$ for $F^{\mr{cl}}$ is $\delta_F-\delta_F=0$. Therefore $F^{\mr{cl}}$ lies in $\mathfrak{Ext}_{S_{\mr{fl}}}(F'',F')$, and $F=F^{\mr{cl}}+_{\mr{Baer}}F_{\bar{\delta}_F}=F^{\mr{cl}}+_{\mr{Baer}}\Phi_2(\bar{\delta}_F)$ is standard with respect to the given chart.
\end{proof}

By Proposition \ref{prop2.1}, it is clear that the homomorphism
\begin{equation}\label{alpha}
\mr{Hom}_S(F'',W\otimes_{\Z}P^{\mr{gp}})\xrightarrow{\alpha} \mr{Hom}_S(F'',W\otimes_{\Z}(\Gml/\Gm)_{S_{\mr{fl}}})
\end{equation}
induced by $\gamma_{P}:P^{\mr{gp}}\rightarrow (\Gml/\Gm)_{S_{\mr{fl}}}$ is  important for understanding standard extensions.

For any $U\in(\mr{fs}/S)$, both $P^{\mr{gp}}$ and $(\Gml/\Gm)_{S_{\mr{fl}}}$ are constructible $\Z$-modules on the small \'etale site of $U$ (see \cite[Lemma 3.5 (ii)]{ols1}). By \cite[Chap. V, Rmk. 1.7 (e)]{mil1} and Lemma \ref{lemA.1}, the sheaf $W=\mc{H}om_S(\Z/n\Z(1),F')$ is a constructible $\Z$-module for the \'etale topology. By \cite[\href{https://stacks.math.columbia.edu/tag/095I}{Tag 095I}]{stacks-project}, the tensor product of $W$ and $P^{\mr{gp}}$ for the \'etale topology is a constructible $\Z$-module, hence it is representable by an algebraic space by \cite[Exp. IX, Prop. 2.7]{sga4}, therefore automatically a sheaf on $(\mr{fs}/S)_{\mr{fl}}$. It follows that the tensor product $W\otimes_{\Z}P^{\mr{gp}}$ on $(\mr{fs}/S)_{\mr{fl}}$ coincides with the corresponding tensor product for the \'etale topology. So we do not have to distinguish for which topology we take the tensor product of $W$ and $P^{\mr{gp}}$, and just simply write $W\otimes_{\Z}P^{\mr{gp}}$. We have similar result for $W\otimes_{\Z}(\Gml/\Gm)_{S_{\mr{fl}}}$.

Recall that the log structure in this paper is defined in the \'etale topology. Let $M_S^{\mr{gp}}/\mc{O}_S^\times$ be the quotient on the small \'etale site of $S$. Since $F''$ is \'etale locally constant, the homomorphism $\alpha$ is identified with 
\[\mr{Hom}_S(F'',W\otimes_{\Z}P^{\mr{gp}})\xrightarrow{\alpha} \mr{Hom}_S(F'',W\otimes_{\Z}M_S^{\mr{gp}}/\mc{O}_S^\times )\]
in which the Hom-groups are taken in the small \'etale site of $S$. By abuse of notation, we denote this homomorphism also by $\alpha$.

The chart $P\rightarrow M_S$ induces a canonical epimorphism $P^{\mr{gp}}\rightarrow M_S^{\mr{gp}}/\mc{O}_S^\times$. Let $K$ be the kernel of this homomorphism. We then get a short exact sequence
$$0\rightarrow K\rightarrow P^{\mr{gp}}\rightarrow M_S^{\mr{gp}}/\mc{O}_S^\times\rightarrow 0$$
of sheaves on the small \'etale site of $S$. Since the stalks of $K$, $P^{\mr{gp}}$ and $M_S^{\mr{gp}}/\mc{O}_S^\times$ are free abelian groups, the canonical sequence 
\begin{equation}\label{eq2.5}
0\rightarrow W\otimes_{\Z}K\rightarrow W\otimes_{\Z}P^{\mr{gp}}\rightarrow W\otimes_{\Z}M_S^{\mr{gp}}/\mc{O}_S^\times\rightarrow 0
\end{equation}
is exact on all stalks at geometric points of $S$. It follows that the sequence itself is exact. 

Now applying the functor $\mr{Hom}_S(F'',-)$ to (\ref{eq2.5}), we get another exact sequence
\begin{equation}\label{eq2.6}
0\rightarrow \mr{Hom}_S(F'',W\otimes_{\Z}K)\rightarrow \mr{Hom}_S(F'',W\otimes_{\Z}P^{\mr{gp}})\xrightarrow{\alpha} \mr{Hom}_S(F'',W\otimes_{\Z}M_S^{\mr{gp}}/\mc{O}_S^\times).
\end{equation}

\begin{prop}\label{prop2.2}
Let the notation and the assumptions be as above.
\begin{enumerate}[(1)]
\item Assume that $\mr{Hom}_S(F'',W\otimes_{\Z}K)=0$, i.e. the map $\alpha$ from (\ref{eq2.6}) is injective. Then the functor (\ref{eq2.3})
\[\Phi_{\mr{std}}:\mathfrak{Ext}_{S_{\mr{fl}}}(F'',F')\times \mathfrak{Hom}(F''(1),F')\otimes_{\Z}P^{\mr{gp}}\rightarrow \mathfrak{Ext}_{S_{\mr{kfl}}}(F'',F')_{\mr{std}}\] is an equivalence of categories.
  
\item Assume that $\alpha$ is an isomorphism. Then 
$$\mathfrak{Ext}_{S_{\mr{kfl}}}(F'',F')_{\mr{std}}=\mathfrak{Ext}_{S_{\mr{kfl}}}(F'',F'),$$
whence the functor $\Phi$ (\ref{eq2.1}) is an equivalence of categories.
\end{enumerate}
\end{prop}
\begin{proof}
We claim that $\Phi_{\mr{std}}$ admits a quasi-inverse, given by 
\begin{equation}\label{eq2.7}
\begin{split}
\Psi_{\mr{std}}:\mathfrak{Ext}_{S_{\mr{kfl}}}(F'',F')_{\mr{std}}&\rightarrow \mathfrak{Ext}_{S_{\mr{fl}}}(F'',F')\times \mathfrak{Hom}(F''(1),F')\otimes_{\Z}P^{\mr{gp}}  \\
F&\mapsto (F-_{\mr{Baer}}\Phi_2(\beta_F),\beta_F)
\end{split},
\end{equation}
where $\beta_F$, under the identification 
$$\mr{Hom}_S(F''(1),F')\otimes_{\Z}P^{\mr{gp}}\cong \mr{Hom}_S(F'',\mc{H}om(\Z/n\Z(1),F')\otimes_{\Z}P^{\mr{gp}}),$$
corresponds to $\bar{\delta}_F$, which is the unique lift of the connecting map $\delta_F:F''\rightarrow R^1\varepsilon_*F'$ of $F$.
Note that the uniqueness of $\beta_F$, which amounts to the uniqueness of $\bar{\delta}_F$, is guaranteed by $\mr{Hom}_S(F'',W\otimes_{\Z}K)=0$.

Since the functor (\ref{eq2.3}) is clearly essentially surjective, it suffices to show that $\Psi_{\mr{std}}\circ\Phi_{\mr{std}}=1$. 

By Lemma \ref{lem2.1}, for $F=\Phi_2(\beta)$ with $\beta\in\mr{Hom}_S(F''(1),F')\otimes_{\Z}P^{\mr{gp}}$, we have $\beta_F=\beta$. Hence for any $F\in\mathfrak{Ext}_{S_{\mr{kfl}}}(F'',F')_{\mr{std}}$, we have 
$$\beta_{F-_{\mr{Baer}}\Phi_2(\beta_F)}=\beta_F-\beta_{\Phi_2(\beta_F)}=\beta_F-\beta_F=0.$$ 
This implies that the direct image of the short exact sequence $$0\rightarrow F'\rightarrow (F-_{\mr{Baer}}\Phi_2(\beta_F))\rightarrow F''\rightarrow0$$
remains exact, therefore $F-_{\mr{Baer}}\Phi_2(\beta_F)$ is indeed classical. We also have 
$$\Psi_{\mr{std}}(\Phi_{\mr{std}}(F^{\mr{cl}},\beta))=\Psi_{\mr{std}}(F^{\mr{cl}}+_{\mr{Baer}}\Phi_2(\beta))=(F^{\mr{cl}},\beta).$$
This proves part (1).

For (2), note that under the assumption that $\alpha$ is an isomorphism, every extension of $F''$ by $F'$ is standard with respect to the given chart by Proposition \ref{prop2.1}. So part (2) follows from part (1).
\end{proof}

\begin{lem}\label{lem2.2}
Assume that the underlying scheme of $S$ is $\Spec A$ with $A$ a noetherian henselian local ring, and $S$ admits a global chart $P\rightarrow M_S$ such that the induced map $P\rightarrow M_{S,\bar{s}}/\mc{O}_{S,\bar{s}}^{\times}$ is an isomorphism for the closed point $s$ of $S$. Then the canonical homomorphism $\alpha$ (\ref{alpha}) is an isomorphism, so the group $\mr{Hom}_S(F'',W\otimes_{\Z}(\Gml/\Gm)_{S_{\mr{fl}}})$ can canonically be identified with the group 
$$\mr{Hom}_S(F'',W\otimes_{\Z}P^{\mr{gp}})=\mr{Hom}_S(F'',W)\otimes_{\Z}P^{\mr{gp}}.$$
\end{lem}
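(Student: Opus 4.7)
The plan is to reduce both sides to computations on the stalk at a geometric point $\bar{s}$ over the closed point, exploiting the henselian assumption. As explained in the paragraph preceding (\ref{eq2.5}), the sheaves $W$, $P^{\mr{gp}}$, $(\Gml/\Gm)_{\Sfl}$, and their tensor products over $\Z$ are (inverse images of) constructible \'etale sheaves on $S$, so we may compute the relevant Hom groups in the \'etale topology. For $S=\Spec A$ with $A$ henselian local of residue field $k$, passing to the strict henselization $A^{\mr{sh}}$ (which is pro-Galois over $A$ with group $G_k=\mr{Gal}(\bar{k}/k)$ and for which global sections equal stalks at $\bar{s}$) yields the identity
\[
\Gamma(\Set,\mc{F})=\mc{F}_{\bar{s}}^{G_k}
\]
for any \'etale sheaf $\mc{F}$ on $S$. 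Combined with the stalk identity $\mc{H}om_S(F'',\mc{F})_{\bar{s}}=\mr{Hom}(F''_{\bar{s}},\mc{F}_{\bar{s}})$ for the finite classical \'etale $F''$, this gives
\[
\mr{Hom}_S(F'',\mc{F})=\mr{Hom}_{G_k}(F''_{\bar{s}},\mc{F}_{\bar{s}}).
\]

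Next, the hypothesis that $P\to M_{S,\bar{s}}/\mc{O}_{S,\bar{s}}^{\times}$ is an isomorphism forces the canonical map $P^{\mr{gp}}\to (\Gml/\Gm)_{\Sfl,\bar{s}}=\overline{M}^{\mr{gp}}_{S,\bar{s}}$ to be a $G_k$-equivariant isomorphism (with trivial $G_k$-action on the source). Consequently, the kernel $K$ of $P^{\mr{gp}}\to (\Gml/\Gm)_{\Sfl}$ has vanishing stalk at $\bar{s}$, and the stalk at $\bar{s}$ of $W\otimes_{\Z}P^{\mr{gp}}\to W\otimes_{\Z}(\Gml/\Gm)_{\Sfl}$ is an isomorphism of $G_k$-modules. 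Together with the previous paragraph (or, equivalently, by noting that the extremes of (\ref{eq2.6}) have matching stalks at $\bar{s}$), this shows that the map $\alpha$ in (\ref{eq2.6}) is an isomorphism, giving the first of the desired identifications.

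For the remaining identification $\mr{Hom}_S(F'',W\otimes_{\Z}P^{\mr{gp}})=\mr{Hom}_S(F'',W)\otimes_{\Z}P^{\mr{gp}}$, note that $P\cong \overline{M}_{S,\bar{s}}$ is a sharp fs monoid, so $P^{\mr{gp}}$ is a finitely generated torsion-free abelian group, hence free of finite rank. Fixing a $\Z$-basis of $P^{\mr{gp}}$ reduces the claim to the evident identity $\mr{Hom}_S(F'',W^{\oplus r})=\mr{Hom}_S(F'',W)^{\oplus r}$. The main technical point in the whole argument is the henselian identification $\Gamma(\Set,\mc{F})=\mc{F}_{\bar{s}}^{G_k}$ applied uniformly to the various tensor products at hand; this is made legitimate by the constructibility and \'etale-descent discussion preceding (\ref{eq2.5}).
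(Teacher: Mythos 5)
Your proof is correct and takes essentially the same route as the paper's: both reduce to the closed point via Gabber's theorem for the henselian pair and then exploit that the chart hypothesis makes $P^{\mr{gp}}\rightarrow(\Gml/\Gm)_{\Sfl}$ an isomorphism there (together with $P^{\mr{gp}}\cong\Z^r$ for the final identification). The only difference is presentational: where the paper first treats constant $F''$ and then descends along a finite \'etale cover trivializing it, you absorb that descent step into taking $G_k$-invariants of the stalk of the internal Hom at $\bar{s}$.
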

\begin{proof}
It suffices to prove that the map $\alpha$ from (\ref{eq2.6}) is an isomorphism. Then the result follows from $P^{\mr{gp}}\cong\Z^r$ for some $r\in\N$.

We first deal with the case that the finite \'etale group scheme $F''$ is constant. Then for any sheaf $G$ on the small \'etale site of $S$, the group $\mr{Hom}_S(F'',G)$ is determined by the group $\mr{Hom}(F''(S),G(S))$. On the other hand, we have $\Gamma(S,G)=\Gamma(s,G|_s)$ by Gabber's theorem, see \cite[\href{https://stacks.math.columbia.edu/tag/09ZI}{Tag 09ZI}]{stacks-project}. This implies that the exact sequence 
\[0\rightarrow\Gamma(S,W\otimes_{\Z}K)\rightarrow\Gamma(S,W\otimes_{\Z}P^{\mr{gp}})\xrightarrow{\beta}\Gamma(S,W\otimes_{\Z}(M_S^{\mr{gp}}/\mc{O}_S^\times))\]
can be identified with the exact sequence
$$0\rightarrow\Gamma(s,(W\otimes_{\Z}K)|_s)\rightarrow\Gamma(s,(W\otimes_{\Z}P^{\mr{gp}})|_s)\rightarrow\Gamma(s,(W\otimes_{\Z}(M_S^{\mr{gp}}/\mc{O}_S^\times))|_s),$$
where $(-)|_s$ denotes the pullback from the small \'etale site of $S$ to that of $s$.
By the property of the given chart $P\rightarrow M_S$, we have $P^{\mr{gp}}|_s\cong (M_S^{\mr{gp}}/\mc{O}_S^\times)|_s$. It follows that the map $\beta$ is an isomorphism. Therefore the map $\alpha$ is an isomorphism.

In general, choose a finite \'etale cover $\tilde{S}=\Spec \tilde{A}\rightarrow\Spec A$ with $\tilde{A}$ henselian local, such that $F''|_{\tilde{S}}$ is a constant group scheme. Let $$f\in \mr{Hom}_S(F'',W\otimes_{\Z}(M_S^{\mr{gp}}/\mc{O}_S^\times)).$$
Then $\tilde{f}:=f\times_S\tilde{S}$ lifts to a unique $\tilde{g}\in \mr{Hom}_{\tilde{S}}(F'',W\otimes_{\Z}P^{\mr{gp}})$ by the previous case. Since $\tilde{S}\times_S\tilde{S}$ is a disjoint union of henselian local schemes which are finite \'etale over $\tilde{S}$, we have $p_1^*\tilde{g}=p_2^*\tilde{g}$ by the previous case again, where $p_1$ and $p_2$ are the projections from $\tilde{S}\times_S\tilde{S}$ to its two factors. It follows that $\tilde{g}$ descends to a unique element of $\mr{Hom}_S(F'',W\otimes_{\Z}P^{\mr{gp}})$. This finishes the proof.
\end{proof}

\begin{thm}[Kato]\label{thm2.1}
Let the assumptions be as in Lemma \ref{lem2.2}. Then
$$\mathfrak{Ext}_{S_{\mr{kfl}}}(F'',F')_{\mr{std}}=\mathfrak{Ext}_{S_{\mr{kfl}}}(F'',F')$$
and the functor (\ref{eq2.1})
$$\Phi:\mathfrak{Ext}_{S_{\mr{fl}}}(F'',F')\times \mathfrak{Hom}(F''(1),F')\otimes_{\Z}P^{\mr{gp}}\rightarrow \mathfrak{Ext}_{S_{\mr{kfl}}}(F'',F')$$
is an equivalence of categories with inverse (\ref{eq2.7})
$$\Psi:\mathfrak{Ext}_{S_{\mr{kfl}}}(F'',F')\rightarrow \mathfrak{Ext}_{S_{\mr{fl}}}(F'',F')\times \mathfrak{Hom}(F''(1),F')\otimes_{\Z}P^{\mr{gp}}.$$
\end{thm}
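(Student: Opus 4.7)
My plan is to simply combine the two preceding results. By Proposition \ref{prop2.2}(2), to conclude both that every extension of $F''$ by $F'$ is standard and that $\Phi$ is an equivalence with inverse $\Psi$, it suffices to verify that the map
$$\alpha : \mr{Hom}_S(F'',W\otimes_{\Z}P^{\mr{gp}}) \rightarrow \mr{Hom}_S(F'',W\otimes_{\Z}(\Gml/\Gm)_{S_{\mr{fl}}})$$
from (\ref{eq2.6}) is an isomorphism. But this is precisely the content of Lemma \ref{lem2.2}: the canonical identification asserted there is exactly the statement that $\alpha$ is bijective, using moreover that $P^{\mr{gp}} \cong \Z^r$ so that $W \otimes_{\Z} P^{\mr{gp}}$ coincides with the direct sum, which turns the target of $\alpha$ into $\mr{Hom}_S(F'',W)\otimes_{\Z}P^{\mr{gp}}$.

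Thus the proof consists of two lines: invoke Lemma \ref{lem2.2} to see that $\alpha$ is an isomorphism, then apply Proposition \ref{prop2.2}(2) to deduce the equivalence of categories together with the equality $\mathfrak{Ext}_{S_{\mr{kfl}}}(F'',F')_{\mr{std}}=\mathfrak{Ext}_{S_{\mr{kfl}}}(F'',F')$.

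There is no real obstacle left at this stage, since all the substance has been absorbed into the auxiliary results. The nontrivial content lies in Lemma \ref{lem2.2}, which uses Gabber's theorem on global sections over henselian local bases to reduce to the closed fiber, where the chart hypothesis $P \xrightarrow{\sim} M_{S,\bar{s}}/\mc{O}_{S,\bar{s}}^{\times}$ gives $P^{\mr{gp}}|_s \cong (\Gml/\Gm)|_s$ and hence kills the kernel $K$ stalkwise, together with a finite \'etale descent argument (using that $\tilde{S}\times_S\tilde{S}$ is a disjoint union of henselian local schemes) to pass from the case of constant $F''$ to the general case. Once these have been established, the theorem itself is a formal consequence.
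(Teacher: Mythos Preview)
Your proposal is correct and matches the paper's own proof essentially verbatim: the paper simply writes ``This follows from Lemma \ref{lem2.2} and Proposition \ref{prop2.2},'' which is exactly the combination you describe. The extra paragraph you give recapping the content of Lemma \ref{lem2.2} is accurate but not needed for the proof itself.
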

\begin{proof}
This follows from Lemma \ref{lem2.2} and Proposition \ref{prop2.2}.
\end{proof}

\subsection{The connected-\'etale sequence}

Assume that $S=\Spec(R)$ is a local henselian ring with residue characteristic $p$. The results in this section are all due to Kato and we will only sketch the proofs.

For any $G\in (\mr{fin}/S)_{\mr{f}}$ there is a unique exact sequence
\begin{center}
$0\to G^{\circ}\to G\to G^{\rm{\acute{e}t}}\to 0$
\end{center}
which restricts to the usual connected-\'etale sequence over any finite Kummer log flat cover, over which $G$ becomes a classical flat group scheme. To get this, one can descend the usual connected-\'etale sequence from some cover over which $G$ is classical (the sequence descends, because it is unique and compatible with base-change).

One now has the following result:
\begin{prop}\label{c-e}
Let $G\in(\mr{fin}/S)_{\mr{d}}$. Assume that the residue characteristic $p$ is positive, and $G$ is killed by a power of $p$. Then $G^{\circ}$ and $G^{\mr{\acute{e}t}}$ are classical finite flat group schemes.
\end{prop}
\begin{proof}
One has to show that a connected finite kfl log group scheme $f:H\to S$ is classical. For this, one looks at the unit map $e:S\to H$, and sees that for any geometric point $\bar{s}$ of $S$, one gets that the identity factors as
\begin{center}
$(M_{S}/\mathcal{O}^{\times}_{S})_{\bar{s}}\to (M_{H}/\mathcal{O}^{\times}_{H})_{e(\bar{s})}\to (M_{S}/\mathcal{O}^{\times}_{S})_{\bar{s}}$.
\end{center}
Using that $H\to S$ is Kummer, one concludes that $f^{-1}(M_{S}/\mathcal{O}^{\times}_{S})\to M_{H}/\mathcal{O}^{\times}_{H}$ is an isomorphism.

From this it then follows that $G^{\circ}$ is classical. As the dual $(G^{\mr{\acute{e}t}})^{D}$ of $G^{\mr{\acute{e}t}}$ is representable by assumption and clearly connected, it is classical. Hence $G^{\mr{\acute{e}t}}$ is classical as well.

\end{proof} 
Recall the following definition. Let $Z$ be an fs log scheme and $z$ a point of $Z$. A chart $P\to M_Z$ of $Z$ is called \textbf{neat} at $z$, if the induced map $P\xrightarrow{\cong }M_{Z,\bar{z}}/\mathcal{O}_{Z,\bar{z}}^{\times}$ is an isomorphism, see \cite[Chap. II, Def. 2.3.1]{ogu1}. Neat charts exist \'etale locally by \cite[Chap. II, Prop. 2.3.7]{ogu1}.

We also denote by $(\mr{fin}/S)^{p}_{\mr{d}}$ the subcategory of $(\mr{fin}/S)_{\mr{d}}$ whose objects are $p$-power torsion. Using theorem \ref{thm2.1} one then gets the following corollary.

\begin{cor}\label{cor2.1}
Assume that $P\to M_S$ is a neat chart and the residue characteristic $p$ is positive. Then there is an equivalence of categories between $(\mr{fin}/S)^{p}_{\mr{d}}$ and the category of pairs $(G^{\mr{cl}}, N)$, where $G^{\mr{cl}}$ is a classical extension of $G^{\mr{\acute{e}t}}$ by $G^{\circ}$, and $N\in \mr{Hom}(G^{\mr{\acute{e}t}}(1), G^{\circ})\otimes_{\Z}P^{\mr{gp}}$.
\end{cor}

\subsection{Extensions are always standard \'etale locally}
In the last section, we studied standard extensions of classical finite flat group schemes after fixing a chart on the base. In this section we show that any extension of one classical finite \'etale group scheme by another classical finite flat group scheme is always \'etale locally standard with respect to a suitable chosen local chart.

\begin{thm}\label{thm2.2}
Let $S$ be an fs log scheme whose underlying scheme is locally noetherian. Let $F',F''\in(\mr{fin}/S)_{\mr{c}}$ with $F''$ \'etale, $F\in(\mr{fin}/S)_{\mr{r}}$ an extension of $F''$ by $F'$. Then for any $s\in S$, there exists an \'etale neighborhood $\tilde{S}$ of $s$ such that the log structure of $S$ admits a chart $P_{\tilde{S}}\to M_{\tilde{S}}=(M_S)|_{\tilde{S}}$ on $\tilde{S}$ with $P$ an fs monoid, and $F$ is standard with respect to this chart.
\end{thm}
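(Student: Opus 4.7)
The plan is to deduce the statement from Proposition \ref{prop2.2}(2) by exhibiting a ``good'' local chart étale-locally around $s$, for which the map $\alpha$ in (\ref{eq2.6}) is already an isomorphism on the nose rather than only after passing to a henselization as in Lemma \ref{lem2.2}.

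First, I would invoke the standard existence of fs charts for log structures: there is an étale neighborhood $\tilde S \to S$ of $s$ together with a chart $P_{\tilde S} \to M_{\tilde S} = (M_S)|_{\tilde S}$, $P$ an fs monoid, such that the induced map $P \to (M_S/\mc O_S^{\times})_{\bar s}$ is an isomorphism (e.g.\ take $P := (M_S/\mc O_S^{\times})_{\bar s}$ and spread out to a chart). Adopting the notation of Section \ref{subsec2.1} applied to $\tilde S$, the resulting epimorphism $P^{\mr{gp}} \to \mc G := (\Gml/\Gm)_{\tilde S_{\mr{fl}}}$ has kernel $K$ whose stalk at $\bar s$ vanishes, because on the étale site $\mc G$ identifies with $(M_{\tilde S}/\mc O_{\tilde S}^{\times})^{\mr{gp}}$ and the chart induces an iso at $\bar s$.

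Next I would observe that $K$ is constructible on $\tilde S_{\mr{\acute{e}t}}$: this was already used in Section \ref{subsec2.1} just before (\ref{eq2.5}), since $P^{\mr{gp}}$ is constant and $(M/\mc O^\times)^{\mr{gp}}$ is constructible on any locally noetherian fs log scheme. A constructible étale sheaf on a locally noetherian scheme with trivial stalk at a geometric point vanishes on an étale neighborhood, so after shrinking $\tilde S$ we may assume $K = 0$ on all of $\tilde S$. Then the exact sequence (\ref{eq2.5}) collapses to an isomorphism $W \otimes_{\Z} P^{\mr{gp}} \xrightarrow{\sim} W \otimes_{\Z} \mc G$ on $\tilde S$, and in particular the map $\alpha$ of (\ref{eq2.6}) is an isomorphism. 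Proposition \ref{prop2.2}(2) then yields $\mathfrak{Ext}_{\tilde S_{\mr{kfl}}}(F'',F')_{\mr{std}} = \mathfrak{Ext}_{\tilde S_{\mr{kfl}}}(F'',F')$, so $F|_{\tilde S}$ is standard with respect to the chart $P_{\tilde S} \to M_{\tilde S}$.

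The main subtle point is the constructibility-to-vanishing step: once we know that $K$ is a constructible étale sheaf (inherited from constructibility of $(M/\mc O^\times)^{\mr{gp}}$) with $K_{\bar s} = 0$, one has to spread this vanishing to a whole étale neighborhood, which is the place where the local noetherianness of $S$ is genuinely used beyond what is already assumed in Proposition \ref{prop2.2}. Everything else---the choice of chart, the identification of $\mc G$ with $(M/\mc O^\times)^{\mr{gp}}$ on the étale site, and the passage through $\alpha$---is a direct unwinding of definitions built on the machinery of the previous subsection.
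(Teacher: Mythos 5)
There is a genuine gap at the ``constructibility-to-vanishing'' step, which you yourself single out as the crux. It is false that a constructible \'etale sheaf on a locally noetherian scheme with trivial stalk at a geometric point vanishes on an \'etale neighborhood of that point: this implication holds for \emph{locally constant} sheaves, but a constructible sheaf is only locally constant along the strata of a partition and can be nonzero on every neighborhood of a point where its stalk vanishes (think of $j_!\Z$ for an open immersion $j$). In the present situation this is not a pathology but the generic behaviour. Take $S=\mathbb{A}^1_k$ with the log structure attached to the divisor $\{0\}$, $s=0$, $P=\N$ and the chart $1\mapsto t$. Then $P\to \overline{M}_{S,\bar 0}$ is an isomorphism, so indeed $K_{\bar 0}=0$; but for every point $t\neq 0$ one has $\overline{M}^{\mr{gp}}_{S,\bar t}=0$, hence $K_{\bar t}=P^{\mr{gp}}=\Z$. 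Since every \'etale neighborhood of $0$ contains such points, $K$ does not vanish on any \'etale neighborhood of $s$, the sequence (\ref{eq2.5}) never collapses, $\alpha$ is not an isomorphism, and Proposition \ref{prop2.2}(2) cannot be invoked. More generally $K_{\bar t}=\ker\bigl(P^{\mr{gp}}\to\overline{M}^{\mr{gp}}_{S,\bar t}\bigr)$ is nonzero at every generization of $s$ at which the log structure drops rank, so your strategy can only succeed when the log structure is (\'etale locally) constant near $s$.

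The paper's proof accepts that $K\neq 0$ and instead treats the problem as one of obstruction theory: by Proposition \ref{prop2.1} one must lift $\delta_F$ through $W\otimes_{\Z}P^{\mr{gp}}\to W\otimes_{\Z}\mc{G}$, and the obstruction is the class $\gamma(\delta_F)\in\mr{Ext}^1_{S_{\mr{\acute{e}t}}}(F'',W\otimes_{\Z}K)$. Via the local-to-global Ext spectral sequence one checks that this class dies in $\Gamma(S,\mc{E}xt^1_{S_{\mr{\acute{e}t}}}(F'',W\otimes_{\Z}K))$ stalk by stalk --- at each geometric point $\bar t$ the surjection $P^{\mr{gp}}\to\mc{G}$ splits because $\overline{M}^{\mr{gp}}_{S,\bar t}$ is a finitely generated free abelian group, so $\alpha$ is surjective over the strict henselization --- hence $\gamma(\delta_F)$ lies in $H^1(S_{\mr{\acute{e}t}},\mc{H}om_S(F'',W\otimes_{\Z}K))$, which is killed by passing to a further \'etale cover. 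To repair your argument you should replace ``shrink until $K=0$'' by this stalkwise splitting plus cohomological-obstruction step; the first paragraph of your proposal (existence of a chart exact at $\bar s$) is fine and is also how the paper begins.
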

\begin{proof}
By shrinking $S$ \'etale locally if necessary, we assume that $S$ admits a chart $P\to M_S$ such that the induced map $P\to M_{S,\bar{s}}/\mc{O}_{S,\bar{s}}^\times$ is an isomorphism. We adopt the notation from Subsection \ref{subsec2.1}. By Proposition \ref{prop2.1}, we are left with lifting 
\[\delta_F:F''\rightarrow R^1\varepsilon_*F'=W\otimes_{\Z}(\Gml/\Gm)_{S_{\mr{fl}}}\]
to a homomorphism $\bar{\delta}_F:F''\to W\otimes_\Z P^{\mr{gp}}$ \'etale locally around $s$. 

The short exact sequence (\ref{eq2.6}) extends to a longer exact sequence
$$\mr{Hom}_S(F'',W\otimes_{\Z}P^{\mr{gp}})\xrightarrow{\alpha} \mr{Hom}_S(F'',W\otimes_{\Z}M_S^{\mr{gp}}/\mc{O}_S^\times)\xrightarrow{\gamma} \mr{Ext}^1_{S_{\mr{\acute{e}t}}}(F'',W\otimes_{\Z}K).$$
The local to global extension spectral sequence gives rise to an exact sequence
\begin{align*}
0\rightarrow &H^1(S_{\mr{\acute{e}t}},\mc{H}om_S(F'',W\otimes_{\Z}K))\rightarrow \mr{Ext}^1_{S_{\mr{\acute{e}t}}}(F'',W\otimes_{\Z}K)\rightarrow   \\
&\Gamma(S,\mc{E}xt^1_{S_{\mr{\acute{e}t}}}(F'',W\otimes_{\Z}K)).
\end{align*}

We claim that $\gamma(\delta_F)$ has trivial image in $\Gamma(S,\mc{E}xt^1_{S_{\mr{\acute{e}t}}}(F'',W\otimes_{\Z}K))$. It suffices to show that $\gamma(\delta_F)$ vanishes at all stalks of $\mc{E}xt^1_{S_{\mr{\acute{e}t}}}(F'',W\otimes_{\Z}K)$. Let $t$ be any point of $S$ with $\bar{t}$ a geometric point above $t$, and let $S_{\bar{t}}$ be the strict henselization of $S$ at $\bar{t}$. Pulling back the short exact sequence (\ref{eq2.5}) to $S_{\bar{t}}$ and then taking global section, we get a short exact sequence
$$0\rightarrow\Gamma(S_{\bar{t}},W\otimes_{\Z}K)\rightarrow \Gamma(S_{\bar{t}},W\otimes_{\Z}P^{\mr{gp}})\rightarrow \Gamma(S_{\bar{t}},W\otimes_{\Z}(M_S^{\mr{gp}}/\mc{O}_S^\times))\rightarrow 0.$$
By \cite[\href{https://stacks.math.columbia.edu/tag/09ZH}{Tag 09ZH}]{stacks-project}, this exact sequence can be identified with the short exact sequence
$$0\rightarrow\Gamma(\bar{t},(W\otimes_{\Z}K)|_{\bar{t}})\rightarrow \Gamma(\bar{t},(W\otimes_{\Z}P^{\mr{gp}})|_{\bar{t}})\rightarrow \Gamma(\bar{t},(W\otimes_{\Z}(M_S^{\mr{gp}}/\mc{O}_S^\times))|_{\bar{t}})\rightarrow 0.$$
Now over the geometric point $\bar{t}$, the epimorphism $P^{\mr{gp}}\to M_S^{\mr{gp}}/\mc{O}_S^\times$ admits a section. It follows that the epimorphism $\Gamma(S_{\bar{t}},W\otimes_{\Z}P^{\mr{gp}})\rightarrow \Gamma(S_{\bar{t}},W\otimes_{\Z}(M_S^{\mr{gp}}/\mc{O}_S^\times))$ also admits a section. This forces the homomorphism
$$\mr{Hom}_{S_{\bar{t}}}(F'',W\otimes_{\Z}P^{\mr{gp}})\xrightarrow{\alpha} \mr{Hom}_{S_{\bar{t}}}(F'',W\otimes_{\Z}(M_S^{\mr{gp}}/\mc{O}_S^\times))$$
to be surjective. It follows that the stalk of $\gamma(\delta_F)$ at $\bar{t}$ vanishes.

Now we see that $\gamma(\delta_F)$ lies in the subgroup $H^1(S,\mc{H}om_S(F'',W\otimes_{\Z}K))$ of $\mr{Ext}^1_{S_{\mr{\acute{e}t}}}(F'',W\otimes_{\Z}K)$. Hence passing to an \'etale cover of $S$, $\gamma(\delta_F)$ vanishes, in other words $\delta_F$ lifts to a homomorphism $\bar{\delta}_F:F''\to W\otimes_{\Z}P^{\mr{gp}}$ after passing to a suitable \'etale cover of $S$. This finishes the proof.
\end{proof}

\subsection{Standard extensions of log $p$-divisible groups}
First we assume that $S$ admits a global chart $P\rightarrow M_S$ with $P$ an fs monoid.

Let $H'=\varinjlim_{n}H'_n$, $H''=\varinjlim_{n}H''_n$ be two objects in $(\text{$p$-div}/S)^{\mr{log}}_{\mr{c}}$, and we assume that $H''$ is \'etale. We denote by $$\mathfrak{Ext}_{S_{\mr{kfl}}}(H'',H')\quad (\text{resp. }\mathfrak{Ext}_{S_{\mr{fl}}}(H'',H'))$$ the category of extensions of $H''$ by $H'$ in $(\text{$p$-div}/S)^{\mr{log}}_{\mr{r}}$ (resp. ($\text{$p$-div}/S)^{\mr{log}}_{\mr{c}}$). Let 
\[H''(1):=\varinjlim_n H''_n\otimes_{\Z/p^n\Z}\Z/p^n\Z(1),\]
and we denote by 
$$\mathfrak{Hom}(H''(1),H')\otimes_{\Z}P^{\mr{gp}}$$
the discrete category associated to the set $\mr{Hom}_S(H''(1),H')\otimes_{\Z}P^{\mr{gp}}$. Let 
\[H^{\mr{cl}}=\varinjlim_{n}H_n^{\mr{cl}}\in \mathfrak{Ext}_{S_{\mr{fl}}}(H'',H'),\]
and $\beta\in \mathfrak{Hom}(H''(1),H')\otimes_{\Z}P^{\mr{gp}}$. The element $\beta$ induces a compatible system 
\[\{\beta_n\in \mathfrak{Hom}(H''_n(1),H'_n)\otimes_{\Z}P^{\mr{gp}}\}_{n}.\]
We apply the functor (\ref{eq2.1}) to the pair $(H_n^{\mr{cl}},\beta_n)$ for each $n\geq 1$ and change the notation for the functor (\ref{eq2.1}) (resp. (\ref{eq2.2})) from $\Phi$ (resp. $\Phi_2$) to $\Phi^n$ (resp. $\Phi_2^n$) in order to indicate its relation with $(H_n^{\mr{cl}},\beta_n)$. Then we get a compatible system  $\{\Phi^n(H_n^{\mr{cl}},\beta_n)\}_n$ with 
$$\Phi^n(H_n^{\mr{cl}},\beta_n)=H_n^{\mr{cl}}+_{\mr{Baer}}\Phi_2^n(\beta_n)\in \mathfrak{Ext}_{S_{\mr{kfl}}}(H''_n,H'_n).$$
Note that $\Phi_2(\beta):=\varinjlim_n\Phi_2^n(\beta_n)$ is an object of $(\text{$p$-div}/S)^{\mr{log}}_{\mr{r}}$. Therefore 
$$\varinjlim_n\Phi^n(H_n^{\mr{cl}},\beta_n)=\varinjlim_n(H_n^{\mr{cl}}+_{\mr{Baer}}\Phi_2^n(\beta_n))$$
lies in $(\text{$p$-div}/S)^{\mr{log}}_{\mr{r}}$. We denote $\varinjlim_n\Phi^n(H_n^{\mr{cl}},\beta_n)$ by $\Phi(H^{\mr{cl}},\beta)$. The association of $\Phi(H^{\mr{cl}},\beta)$ to the pair $(H^{\mr{cl}},\beta)$ gives rise to a functor 
\begin{equation}\label{eq2.8}
\Phi: \mathfrak{Ext}_{S_{\mr{fl}}}(H'',H')\times \mathfrak{Hom}(H''(1),H')\otimes_{\Z}P^{\mr{gp}}\rightarrow \mathfrak{Ext}_{S_{\mr{kfl}}}(H'',H').
\end{equation}
Note that we use the same notation for both the functor (\ref{eq2.8}) and the functor (\ref{eq2.1}), but the potential confusion between the two functors can be cleared in the context.

\begin{defn}\label{defn2.2}
Let $H',H''\in (\text{$p$-div}/S)^{\mr{log}}_{\mr{c}}$ with $H''$ \'etale. An extension $H$ of $H''$ by $H'$ in the category $(\text{$p$-div}/S)^{\mr{log}}_{\mr{r}}$ is called \textbf{standard with respect to the given chart $P\rightarrow M_S$}, if it lies in the essential image of the functor (\ref{eq2.8}). We denote by $\mathfrak{Ext}_{S_{\mr{kfl}}}(H'',H')_{\mr{std}}$ the full subcategory of $\mathfrak{Ext}_{S_{\mr{kfl}}}(H'',H')$ consisting of the standard extensions, and the functor (\ref{eq2.8}) induces a functor
\begin{equation}\label{eq2.9}
\mathfrak{Ext}_{S_{\mr{fl}}}(H'',H')\times \mathfrak{Hom}(H''(1),H')\otimes_{\Z}P^{\mr{gp}}\rightarrow \mathfrak{Ext}_{S_{\mr{kfl}}}(H'',H')_{\mr{std}}.
\end{equation}
If $H\cong\Phi(H^{\mr{cl}},\beta)$, we call $\beta$ a \textbf{pseudo-monodromy of $H$ with respect to the given chart $P\rightarrow M_S$}. If the functor (\ref{eq2.9}) is an equivalence, the pseudo-monodromy $\beta$ is uniquely determined. In this case, we call $\beta$ the \textbf{monodromy of $H$ with respect to the given chart $P\to M_S$}.
\end{defn}

\begin{thm}[Kato]\label{thm2.3}
Let $S$ be an fs log scheme whose underlying scheme is $\Spec A$ for a noetherian henselian local ring $A$, and let $P\rightarrow M_S$ be a global chart such that the induced map $P\rightarrow M_{S,\bar{s}}/\mathcal{O}_{S,\bar{s}}^\times$ is an isomorphism for the closed point $s$ of $S$. Let $H',H''\in (\text{$p$-div}/S)^{\mr{log}}_{\mr{c}}$ with $H''$ \'etale. Then the functor 
$$\Phi: \mathfrak{Ext}_{S_{\mr{fl}}}(H'',H')\times \mathfrak{Hom}(H''(1),H')\otimes_{\Z}P^{\mr{gp}}\rightarrow \mathfrak{Ext}_{S_{\mr{kfl}}}(H'',H')$$
from (\ref{eq2.8}) is an equivalence of categories.
\end{thm}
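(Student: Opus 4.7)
The plan is to reduce Theorem \ref{thm2.3} to the finite-level equivalence of Theorem \ref{thm2.1} applied to the $p^n$-torsion of the given extension, and then pass to the colimit in $n$.

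Given an extension $0\to H'\to H\to H''\to 0$ in $(\text{$p$-div}/S)^{\mr{log}}_{\mr{r}}$, the $p$-divisibility of $H''$ together with the snake lemma yields, for each $n\geq 1$, a short exact sequence
\[
0\to H'_n\to H_n\to H''_n\to 0
\]
in $\mathfrak{Ext}_{S_{\mr{kfl}}}(H''_n,H'_n)$. Under the hypotheses on $S$ and the chart $P_S\to M_S$, Theorem \ref{thm2.1} applies to each of these finite-level extensions and produces (via the inverse functor $\Psi$ from Proposition \ref{prop2.2}(1), whose uniqueness hypothesis is supplied by Lemma \ref{lem2.2}) a unique pair $(H_n^{\mr{cl}},\beta_n)\in\mathfrak{Ext}_{S_{\mr{fl}}}(H''_n,H'_n)\times\bigl(\mr{Hom}_S(H''_n(1),H'_n)\otimes_{\Z} P^{\mr{gp}}\bigr)$ with
\[
H_n\;\cong\;H_n^{\mr{cl}}+_{\mr{Baer}}\Phi_2^n(\beta_n).
\]

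The key step is compatibility as $n$ varies. The transition map $H_n\hookrightarrow H_{n+1}$ identifies $H_n$ with $H_{n+1}[p^n]$ and is compatible with the inclusions $H'_n\hookrightarrow H'_{n+1}$ and $H''_n\hookrightarrow H''_{n+1}$. Since the Baer sum and the functors $\Phi_2^\bullet$ commute with taking $p^n$-torsion, applying $[p^n]$ to the decomposition at level $n+1$ and invoking uniqueness at level $n$ forces $H_{n+1}^{\mr{cl}}[p^n]=H_n^{\mr{cl}}$ and identifies $\beta_n$ with the restriction of $\beta_{n+1}$. Consequently $H^{\mr{cl}}:=\varinjlim_n H_n^{\mr{cl}}\in\mathfrak{Ext}_{S_{\mr{fl}}}(H'',H')$ is well-defined. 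For the monodromies, since $H''_n(1)$ is killed by $p^n$ any map $H''_n(1)\to H'$ factors through $H'_n$, so $\mr{Hom}_S(H''(1),H')=\varprojlim_n\mr{Hom}_S(H''_n(1),H'_n)$; and because $P^{\mr{gp}}\cong\Z^r$ is finite free, this identification survives tensoring with $P^{\mr{gp}}$:
\[
\mr{Hom}_S(H''(1),H')\otimes_{\Z} P^{\mr{gp}}\;=\;\varprojlim_n\bigl(\mr{Hom}_S(H''_n(1),H'_n)\otimes_{\Z} P^{\mr{gp}}\bigr).
\]
The $\beta_n$ therefore assemble into a single $\beta\in\mr{Hom}_S(H''(1),H')\otimes_{\Z} P^{\mr{gp}}$ with $\Phi(H^{\mr{cl}},\beta)\cong H$, proving essential surjectivity.

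For full faithfulness, a morphism $\Phi(H^{\mr{cl}},\beta)\to\Phi(G^{\mr{cl}},\gamma)$ in $\mathfrak{Ext}_{S_{\mr{kfl}}}(H'',H')$ is the same as a compatible system of morphisms $H_n\to G_n$ in $\mathfrak{Ext}_{S_{\mr{kfl}}}(H''_n,H'_n)$. Theorem \ref{thm2.1} puts these into natural bijection with compatible systems of morphisms of pairs $(H_n^{\mr{cl}},\beta_n)\to(G_n^{\mr{cl}},\gamma_n)$, which in the colimit/inverse-limit yield exactly a morphism $(H^{\mr{cl}},\beta)\to(G^{\mr{cl}},\gamma)$. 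The main obstacle I expect is the compatibility verification in the second paragraph: one must argue that the independent decompositions supplied by the finite-level theorem really glue into a single $p$-divisible datum. This rests on the uniqueness clause of Proposition \ref{prop2.2}(1) combined with the commutation of tensoring with $P^{\mr{gp}}$ with the inverse limit, which is available precisely because $P^{\mr{gp}}$ is finitely generated free.
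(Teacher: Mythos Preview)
Your proof is correct and follows exactly the approach the paper takes: the paper's proof is the single sentence ``This follows from the equivalences, see Theorem \ref{thm2.1}, on the finite levels,'' and your argument is the natural elaboration of that sentence, supplying the compatibility of the level-$n$ decompositions (via the uniqueness in Proposition \ref{prop2.2}(1)) and the passage to the limit using that $P^{\mr{gp}}$ is finite free.
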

\begin{proof}
This follows from the equivalences on the finite levels, see Theorem \ref{thm2.1}.
\end{proof}

Let $S$ be as in Theorem \ref{thm2.3}, and we further assume that the residue characteristic $p$ is positive. Assume that $H\in (\text{$p$-div}/S)^{\mr{log}}_{\mr{d}}$. Similarly to \S 2.2, there is an exact connected-\'etale sequence 
\[0\to H^{\circ}\to H\to H^{\et}\to 0\]
where $H^{\circ}$ (resp. $H^{\et}$) is a connected (resp. \'etale) classical $p$-divisible group. This one gets by observing that the connected-\'etale sequences for the finite levels are compatible with the transition maps. Using this and Corollary \ref{cor2.1}, one gets a complete classification of log-$p$-divisible groups in $(\text{$p$-div}/S)^{\mr{log}}_{\mr{d}}$.
\begin{cor}[Kato]\label{cor2.3}
Let $S$ be an fs log scheme whose underlying scheme is $\Spec A$ for a noetherian henselian local ring $A$ with residue characteristic $p>0$, and let $P\rightarrow M_S$ be a global chart such that the induced map $P\rightarrow M_{S,\bar{s}}/\mathcal{O}_{S,\bar{s}}$ is an isomorphism for the closed point $s$ of $S$. Then any $H\in (\text{$p$-div}/S)^{\mr{log}}_{\mr{d}}$ is constructed out of a unique object in 
$$\mathfrak{Ext}_{S_{\mr{fl}}}(H^{\mr{\acute{e}t}}, H^{\circ})\times \mathfrak{Hom}(H^{\mr{\acute{e}t}}(1),H^{\circ})\otimes_{\Z}P^{\mr{gp}}$$
from the equivalence of categories (\ref{eq2.8}).
\end{cor}
\begin{rmk}\label{rmk2.2}
Let $S$ be a general fs log scheme whose underlying scheme is locally noetherian, and let $H=(H_n)_n$ be extension of an \'etale classical $p$-divisible group $H''$ by another classical $p$-divisible group $H'$. For each $n$, we can make $H_n$ standard \'etale locally on $S$ by Theorem \ref{thm2.2}. With some more effort, we can make all $H_n$ standard at the same time over some \'etale neighborhood of any point $s\in S$. However, it is not clear to the authors how to construct a compatible system of monodromies at all levels.
\end{rmk}

We can however use the theorem to prove that log $p$-divisible groups are formally log smooth.

\begin{thm}\label{thm2.4}
Let $S$ be a locally noetherian fs log scheme on which $p$ is locally nilpotent, and $H$ a log $p$-divisible group over $S$, which lies in $(\text{$p$-div}/S)^{\mr{log}}_{\mr{d}}$. Then $H$ is formally log smooth, i.e. for any strict closed square-zero thickening $T_0\hookrightarrow T$ in $(\mr{fs}/S)$, any element of $H(T_0)$ can be lifted to an element of $H(T)$ \'etale locally on $T$.
\end{thm}

\begin{proof}
We may assume that $T$ is affine. Now we work locally around a geometric point $\bar{t}$ of $T$. Let $\overline{T}$ be the spectrum of the \'etale stalk of the structure sheaf $\mathcal{O}_{T, \bar{t}}$, which is a strict henselian local ring. Then  $\overline{T}=\varprojlim_{(V,\bar{t})} V$ is the limit along the \'etale neighbourhoods of $\bar{t}$. Now $H=\varinjlim H_{n}$, where $H_{n}$ are finite kfl log group schemes. So the $H_{n}$ are in particular of finite presentation and therefore limit preserving functors. Here on the \'etale neighbourhoods we put the log structure that is induced from the log structure on $T$. Hence we get 
\begin{align*}
H(\overline{T}) & = \varinjlim_{n}H_{n}(\overline{T})\\ & = \varinjlim_{n}H_{n}(\varprojlim V)\\  &= \varinjlim_{n}\varinjlim_{V}H_{n}(V)\\ & = \varinjlim_{V}H(V).
\end{align*}
Here the first equality is due to quasi-compactness of $\overline{T}$ and the last equality follows formally since colimits commute with colimits. In particular any section of $H(\overline{T})$ extends to an \'etale neighbourhood.

From this we see that we can assume that $T$ is strict henselian local. Further by replacing $H$ by $H\times_ST$, we may work with the base $T$. It suffices to prove the surjectivity of the map $H(T)\to H(T_0)$. 

Fix a neat chart $P$ of $\mathcal{M}_{T}$, such that $P\cong M_{T,\bar{t}}/\mathcal{O}_{T,\bar{t}}^\times=M_{T_{0},\bar{t}}/\mathcal{O}_{T_{0},\bar{t}}^\times$, where now $\bar{t}$ denotes the closed point of $T$. Now let $0\to H^\circ\to H\to H^{\mr{\acute{e}t}}\to 0$ be the connected-\'etale sequence of $H$, and we have that $H^\circ$ and $H^{\mr{\acute{e}t}}$ are classical. By Corollary \ref{cor2.3}, $H$ admits a decomposition $H_{T}=H^{\mr{cl}}+_{\mr{Baer}} H^{N}$ into the Baer sum of a classical $p$-divisible group $H^{\mr{cl}}\in \mathfrak{Ext}_{S_{\mr{fl}}}(H^{\mr{\acute{e}t}}, H^{\circ})$ and a log $p$-divisible group $H^{N}$ constructed out of the monodromy $N\in \mathfrak{Hom}(H^{\mr{\acute{e}t}}(1), H^{\circ})\otimes_{\Z}P^{\mr{gp}}$ of $H$.
We have the following canonical commutative diagram
$$\xymatrix{
0\ar[r] &H^{\circ}(T)\ar[r]\ar@{->>}[d]^{\alpha} &H(T)\ar[r]\ar[d]^{\beta} &H^{\mr{\acute{e}t}}(T)\ar[r]^-{\delta}\ar[d]_{\cong}^{\gamma} &H^1_{\mr{kfl}}(T, H^{\circ})\ar[d]^{\lambda}     \\
0\ar[r] &H^{\circ}(T_0)\ar[r] &H(T_0)\ar[r] &H^{\mr{\acute{e}t}}(T_0)\ar[r]^-{\delta_0} &H^1_{\mr{kfl}}(T_0,H^{\circ}) 
}$$
with exact rows. Since both $H^{\circ}$ and $H^{\mr{\acute{e}t}}$ are classically formally smooth by \cite[Chap. II, 3.3.13]{mes1}, both $\alpha$ and $\gamma$ are surjective. Moreover, since $H^{\mr{\acute{e}t}}$ is \'etale, $\gamma$ is even an isomorphism. To show the surjectivity of $\beta$, it suffices to show that the restriction $\lambda\vert_{\mr{im}(\delta)}$ of $\lambda$ to $\mr{im}(\delta)$ is injective by the five lemma. 

Let $\varepsilon:(\mr{fs}/T)_{\mr{kfl}}\rightarrow (\mr{fs}/T)_{\mr{fl}}$ be the canonical forgetful map between the two sites. By Proposition \ref{propD} the choice of $P$ also provides a splitting 
\begin{equation}\label{eq2.10}
H^1_{\mr{kfl}}(T, H^{\circ})\cong H^{1}_{\mr{fl}}(T, H^{\circ})\oplus \mr{Hom}_T(\mathbb{Z}_{p}(1), H^{\circ})\otimes_{\Z}P^{\mr{gp}}.
\end{equation}

For all $n\geq 1$, the sheaf $\mc{H}om_T(\Z/p^n\Z(1),H^{\circ}_n)$ is representable by an \'etale group scheme over $T$ by Lemma \ref{lemA.1}. Thus we have
\[\Gamma(T,\mc{H}om_T(\Z/p^n\Z(1),H^{\circ}_n))=\Gamma(T_0,\mc{H}om_T(\Z/p^n\Z(1),H^{\circ}_n))\]
Therefore the map $\lambda$ is injective on $\mr{Hom}_T(\mathbb{Z}_{p}(1), H^{\circ})\otimes_{\Z}P^{\mr{gp}}$.

Recall that we have $H=H^{\mr{cl}}+_{\mr{Baer}}H^N$. Let $\mr{pr}_{\mr{cl}}$ (resp. $\mr{pr}_N$) be the projection from $H^{\mr{cl}}$ (resp. $H^N$) to $H^{\mr{\acute{e}t}}$. For any $x\in H^{\mr{\acute{e}t}}(T)$, $\delta(x)$ can be represented by the sum of the two torsors $\mr{pr}_{\mr{cl}}^{-1}(x)\in H^1_{\mr{fl}}(T,H^{\circ})$ and $\mr{pr}_N^{-1}(x)\in H^1_{\mr{kfl}}(T,H^{\circ})$, and this sum is the direct decomposition with respect to (\ref{eq2.10}).
Let $x$ be such that $\delta(x)\in \mr{ker}(\lambda)$. Since $\lambda$ is injective on $\mr{Hom}_T(\mathbb{Z}_{p}(1), H^{\circ})\otimes_{\Z}P^{\mr{gp}}$, it follows that $\delta(x)$ is represented by its classical part $\mr{pr}_{\mr{cl}}^{-1}(x)$, and $0=\lambda(\delta(x))=\lambda(\mr{pr}_{\mr{cl}}^{-1}(x))$. 
Since $H^{\mr{cl}}$ is formally smooth by \cite[Chap. II, 3.3.13]{mes1}, the homomorphism $\beta^{\mr{cl}}$ in the diagram 
$$\xymatrix{
0\ar[r] &H^{\circ}(T)\ar[r]\ar@{->>}[d]^{\alpha} &H^{\mr{cl}}(T)\ar[r]\ar@{->>}[d]^{\beta^{\mr{cl}}} &H^{\mr{\acute{e}t}}(T)\ar[r]^-{\delta^{\mr{cl}}}\ar[d]_{\cong}^{\gamma} &H^1_{\mr{fl}}(T,H^{\circ})\ar[d]^{\lambda_1}     \\
0\ar[r] &H^{\circ}(T_0)\ar[r] &H^{\mr{cl}}(T_0)\ar[r] &H^{\mr{\acute{e}t}}(T_0)\ar[r]^-{\delta_0^{\mr{cl}}} &H^1_{\mr{fl}}(T_0,H^{\circ}) 
}$$ 
is surjective. An easy diagram chasing tells us that $\delta(x)=\delta^{\mr{cl}}(x)$ is trivial (here the bijectivity of $\gamma$ is used). Hence $\lambda\vert_{\mr{im}(\delta)}$ is injective. This finishes the proof.
\end{proof}

\section{Log $p$-divisible groups associated to log 1-motives}
In this section we want to study the finite Kummer flat group log schemes associated to log $1$-motives. In this case, we will see that the splittings (in the sense of the previous section) are induced from the monodromy pairings of log 1-motives (see Definition \ref{defn3.1} below).

Let $S$ be an fs log scheme with its underlying scheme locally noetherian. Let $G$ be an extension of an abelian scheme by a torus $T$, and let $G_{\mr{log}}$ be the pushout of $G$ in the category of sheaves of abelian groups on $(\mr{fs}/S)_{\mr{kfl}}$ along the log-enlargement $T\to T_{\mr{log}}$. Here $T_{\mr{log}}=\mathcal{H}om(X, \Gml)$ with $X$ the character sheaf of the torus $T$. Then a log 1-motive $\mathbf{M}=[Y\xrightarrow{u} G_{\mr{log}}]$ over $S$, is a morphism of commutative group sheaves, where $Y$ is a lattice, i.e. an \'etale locally finite free abelian group sheaf. See \cite[Def. 2.2]{k-k-n2} for the details about log 1-motives. Note that $G_{\mr{log}}$ in \cite{k-k-n2} is defined on the site $(\mr{fs}/S)_{\rm{\acute{e}t}}$, and agrees with ours by \cite[Prop. 2.1]{zha3}.

The composition
\begin{center}$Y\xrightarrow{u} G_{\mr{log}}\to G_{\mr{log}}/G=T_{\mr{log}}/T=\mathcal{H}om(X, \Gml/\Gm)$
\end{center}
corresponds to a pairing 
\[\langle-,-\rangle_\mathbf{M}: X\times Y \to \Gml/\Gm.\]
\begin{defn}\label{defn3.1}
We call the pairing $\langle-,-\rangle_\mathbf{M}$ the \textbf{monodromy pairing} of the log 1-motive $\mathbf{M}$.
\end{defn}

\begin{prop}\label{prop3.1}
Suppose $G$ is an extension of an abelian scheme $B$ by a torus $T$ over the underlying scheme of $S$. Then \'etale locally on $S$ we have a decomposition $u=u_1+u_2$, where $u_i:Y\rightarrow G_{\mr{log}}$ for $i=1,2$, such that $u_1$ factorizes as $Y\rightarrow G\hookrightarrow G_{\mr{log}}$ and $u_2$ factorizes as $Y\rightarrow T_{\mr{log}}\hookrightarrow G_{\mr{log}}$.
\end{prop}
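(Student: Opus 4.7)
The plan is to exploit the short exact sequence
$$0 \to T_{\mathrm{log}} \to G_{\mathrm{log}} \to B \to 0$$
of sheaves on $\fsSet$ that arises from the construction of $G_{\mathrm{log}}$ as the pushout of the Raynaud extension $0 \to T \to G \to B \to 0$ along the canonical inclusion $T \hookrightarrow T_{\mathrm{log}}$. Once this sequence is in hand, producing $u_1$ amounts to lifting the composite morphism $Y \xrightarrow{u} G_{\mathrm{log}} \to B$ to a morphism $Y \to G$, and then $u_2 := u - u_1$ will automatically factor through the kernel $T_{\mathrm{log}}$.

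First I would pass to an \'etale cover of $S$ over which $Y$ becomes a constant sheaf $\Z^r$: by the definition of a log 1-motive, $Y$ is \'etale locally on $S$ a free abelian group of finite rank, so this is available. On such a cover, the morphism $u: Y \to G_{\mathrm{log}}$ is determined by the finite collection of sections $u(e_1), \ldots, u(e_r) \in G_{\mathrm{log}}(S)$, where $e_1, \ldots, e_r$ is a $\Z$-basis of $Y$.

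Next, for each $i$, let $b_i \in B(S)$ be the image of $u(e_i)$ in $B$. The preimage of $b_i$ under $G \to B$ is a $T$-torsor on the underlying scheme of $S$, and since $T$ is a torus (hence smooth with $T$-torsors classified by $H^1_{\mr{\acute{e}t}}$), this torsor is trivial after passing to a suitable \'etale cover of $S$. After replacing $S$ by such a cover, each $b_i$ admits a lift $\tilde{b}_i \in G(S)$. Define $u_1: Y \to G \hookrightarrow G_{\mathrm{log}}$ by $e_i \mapsto \tilde{b}_i$ and put $u_2 := u - u_1$. By construction $u_1$ factors through $G \hookrightarrow G_{\mathrm{log}}$, while $u_2$ has zero image in $B(S)$ and therefore factors through $T_{\mathrm{log}} \hookrightarrow G_{\mathrm{log}}$.

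The main (but mild) obstacle is simply confirming the exactness of $0 \to T_{\mathrm{log}} \to G_{\mathrm{log}} \to B \to 0$ in the \'etale topology; this is built into the construction of $G_{\mathrm{log}}$ recalled in \cite[Def. 2.2]{k-k-n2}, and once granted, the rest of the argument is a straightforward combination of \'etale local constancy of $Y$ with the \'etale local triviality of $T$-torsors.
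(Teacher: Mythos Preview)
Your proof is correct, but it runs in the opposite direction from the paper's. The paper first constructs $u_2$ and then sets $u_1 := u - u_2$: it considers the composition $\bar{u}: Y \xrightarrow{u} G_{\mr{log}} \to (G_{\mr{log}}/G)_{S_{\mr{\acute{e}t}}} \cong (T_{\mr{log}}/T)_{S_{\mr{\acute{e}t}}}$, applies $\mr{Hom}_S(Y,-)$ to the sequence $0 \to T \to T_{\mr{log}} \to (T_{\mr{log}}/T)_{S_{\mr{\acute{e}t}}} \to 0$, and observes that the obstruction to lifting $\bar{u}$ to $u_2: Y \to T_{\mr{log}}$ lies in $\mr{Ext}^1_{S_{\mr{\acute{e}t}}}(Y,T) \cong \mr{Pic}(S)^{rt}$, which vanishes Zariski locally. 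You instead construct $u_1$ first by lifting the composite $Y \to G_{\mr{log}} \to B$ along $G \to B$, using that the fiber over each $b_i$ is a $T$-torsor, and then set $u_2 := u - u_1$. The two arguments are dual: one uses $G_{\mr{log}}/G \cong T_{\mr{log}}/T$, the other uses $G_{\mr{log}}/T_{\mr{log}} \cong B$; both ultimately rest on the \'etale-local (indeed Zariski-local) triviality of $T$-torsors. Your approach is perhaps slightly more concrete, while the paper's choice to isolate $u_2$ first dovetails more naturally with Proposition~\ref{prop3.2}, where $u_2$ is further refined via a chosen chart.
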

\begin{proof}
Let $\bar{u}$ be the composition 
\[Y\xrightarrow{u}G_{\mr{log}}\rightarrow (G_{\mr{log}}/G)_{S_{\mr{\acute{e}t}}}=(T_{\mr{log}}/T)_{S_{\mr{\acute{e}t}}}.\]
The short exact sequence $0\rightarrow T\rightarrow T_{\mr{log}}\rightarrow (T_{\mr{log}}/T)_{S_{\mr{\acute{e}t}}}\rightarrow 0$ gives rise to an exact sequence
$$\rightarrow \mr{Hom}_{S}(Y,T_{\mr{log}})\rightarrow\mr{Hom}_{S}(Y,T_{\mr{log}}/T)\xrightarrow{\delta}\mr{Ext}^1_{S_{\mr{\acute{e}t}}}(Y,T)\rightarrow.$$
\'Etale locally, $\mr{Ext}^1_{S_{\mr{\acute{e}t}}}(Y,T)=\mr{Ext}^1_{S_{\mr{\acute{e}t}}}(\Z^r,\Gm^t)=\mr{Ext}^1_{S_{\mr{\acute{e}t}}}(\Z,\Gm)^{rt}\cong\mr{Pic}(S)^{rt}$. Since a line bundle is Zariski locally trivial, $\delta(\bar{u})$ is zero \'etale locally on $S$. It follows that there exists $u_2\in\mr{Hom}_S(Y,T_{\mr{log}})$ lifting $\bar{u}$ \'etale locally on $S$. Let $u_1:=u-u_2$ as a homomorphism from $Y$ to $G_{\mr{log}}$, it is easy to see that $u_1$ factors through $G$. This finishes the proof.
\end{proof}

\begin{prop}\label{prop3.2}
Assume that $S$ admits a global chart $\alpha:P\rightarrow M_S$, where $P$ is an fs monoid.
Any bilinear map $\langle-,-\rangle:X\times Y\rightarrow P^{\mr{gp}}_S$ which lifts the monodromy pairing $\langle-,-\rangle_\mathbf{M}$ of $\mathbf{M}$, gives rise to a splitting $u=u_{1}+u_{2}$, where $u_2:Y\rightarrow T_{\mr{log}}\hookrightarrow G_{\mr{log}}$ is the map induced by $\langle-,-\rangle$ and $u_1=u-u_2:Y\rightarrow G_{\mr{log}}$ factors through $G\hookrightarrow G_{\mr{log}}$.

And such a bilinear map always exists \'etale locally.
\end{prop}
\begin{proof}
The chart $\alpha:P\rightarrow M_S$ gives rise to a push-out diagram
$$\xymatrix{
\alpha^{-1}(\mc{O}_S^{\times})\ar[r]\ar[d] &\mc{O}_S^{\times}\ar[d]  \\
P\ar[r] &M_S
}.$$
Since taking group envelope commutes with colimits, we get another push-out digram
$$\xymatrix{
\alpha^{\mr{gp},-1}(\mc{O}_S^{\times})\ar[r]\ar[d] &\mc{O}_S^{\times}\ar[d]  \\
P^{\mr{gp}}_S\ar[r] &M^{\mr{gp}}_S
},$$
where $\alpha^{gp}:P^{\mr{gp}}_S\rightarrow M^{\mr{gp}}_S$ denotes the homomorphism induced from $\alpha$. This push-out diagram gives a short exact sequence
$$0\rightarrow \alpha^{\mr{gp},-1}(\mc{O}_S^{\times})\rightarrow P^{\mr{gp}}_S\rightarrow \overline{M}^{\mr{gp}}_S\rightarrow 0$$
on the small \'etale site of $S$ with $\overline{M}^{\mr{gp}}_S:=M^{\mr{gp}}_S/\mc{O}_S^{\times}$. Applying $\mr{Hom}_{S_{\mr{\acute{e}t}}}(Y\otimes_{\Z}X,-)$ to this short exact sequence, we get an exact sequence
$$\mr{Hom}_{S}(Y\otimes_{\Z}X,P^{\mr{gp}}_S)\rightarrow \mr{Hom}_{S}(Y\otimes_{\Z}X,\overline{M}^{\mr{gp}}_S)\xrightarrow{\delta} \mr{Ext}_{S_{\mr{\acute{e}t}}}^1(Y\otimes_{\Z}X,\alpha^{\mr{gp},-1}(\mc{O}_S^{\times})).$$
Let $\bar{\gamma}:Y\otimes_{\Z}X\rightarrow(\Gml/\Gm)_{S_{\mr{\acute{e}t}}}$ be the homomorphism induced by the canonical pairing. Since both $X$ and $Y$ are \'etale locally constant, the map $\bar{\gamma}$ is determined by its induced map $Y\otimes_{\Z}X\rightarrow\overline{M}^{\mr{gp}}_S$ which we still denote by $\bar{\gamma}$ by abuse of notation. Since 
$$\mr{Ext}_{S_{\mr{\acute{e}t}}}^1(Y\otimes_{\Z}X,\alpha^{\mr{gp},-1}(\mc{O}_S^{\times}))=H^1(S_{\mr{\acute{e}t}},\mc{H}om_{S}(Y\otimes_{\Z}X,\alpha^{\mr{gp},-1}(\mc{O}_S^{\times}))),$$
the element $\delta(\bar{\gamma})$ is \'etale locally trivial on $S$. Therefore, \'etale locally on $S$ the map $\bar{\gamma}$ lifts to a homomorphism $\gamma:Y\otimes_{\Z}X\rightarrow P^{\mr{gp}}_S$.  The map $\gamma$ gives rise to a homomorphism $Y\otimes_{\Z}X\rightarrow\Gml$ which corresponds to a homomorphism $u_2:Y\rightarrow T_{\mr{log}}$. Obviously, $u_2$ lifts $\bar{u}$. Let $u_1:=u-u_2$, we have that $u_1$ factors through $G\hookrightarrow G_{\mr{log}}$. This finishes the proof.
\end{proof}

\begin{defn}
For any positive integer $n$, let 
\[T_n(\mathbf{M}):=H^{-1}(\mathbf{M}\otimes^L\Z/n\Z).\]
\end{defn}

\begin{prop}\label{prop3.3}
Let $S$ be a locally noetherian fs log scheme, $M=[Y\xrightarrow{u}G_{\mr{log}}]$
a log 1-motive over $S$, and $n$ a positive integer. Then we have the following.
\begin{enumerate}[(1)]
\item $T_n(M)$ fits into the following exact sequence
$$0\rightarrow G[n]\rightarrow T_n(\mathbf{M})\rightarrow Y/nY\rightarrow0$$
of sheaves of abelian groups on $(\mr{fs}/S)_{\mr{kfl}}$.
\item $T_n(\mathbf{M})\in(\mr{fin}/S)_{\mr{d}}$.
\item Let $m$ be another positive integer. Then the map $T_{mn}(\mathbf{M})\rightarrow T_{n}(\mathbf{M})$ induced by $\Z/mn\Z\xrightarrow{m}\Z/n\Z$ is surjective.
\end{enumerate}
\end{prop}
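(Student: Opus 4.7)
The plan is to deduce (1) from the distinguished triangle associated to the stupid truncation of the two-term complex $M$, and then derive (2) and (3) as straightforward consequences.

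For part (1), I will apply $-\otimes^L\Z/n\Z$ to the distinguished triangle
\[
G_{\mr{log}}[0]\to M\to Y[1]\xrightarrow{u}G_{\mr{log}}[1]
\]
coming from the stupid truncation of $M$ (with $G_{\mr{log}}$ in degree $0$ and $Y$ in degree $-1$), and take the associated long exact sequence of cohomology sheaves on $\Skfl$. The relevant segment reads
\[
Y[n]\to G_{\mr{log}}[n]\to T_n(M)\to Y/nY\xrightarrow{\bar u}G_{\mr{log}}/nG_{\mr{log}}.
\]
The desired short exact sequence then follows from three verifications: first, $Y[n]=0$, since $Y$ is \'etale-locally a lattice and hence torsion-free; second, $G_{\mr{log}}/nG_{\mr{log}}=0$ as a sheaf on $\Skfl$, which amounts to $n\colon G_{\mr{log}}\to G_{\mr{log}}$ being a kfl epimorphism, and follows from $n\colon G\to G$ being fppf-surjective for the semi-abelian $G$ together with $n\colon \Gml\to\Gml$ being kfl-surjective (as Kummer flat covers supply $n$-th roots of arbitrary log elements); third, the equality $G_{\mr{log}}[n]=G[n]$, which follows from the torsion-freeness of the quotient $G_{\mr{log}}/G\cong T_{\mr{log}}/T\cong\mc{H}om(X,\overline{M}_S^{\mr{gp}})$, itself a consequence of the fact that the stalks of $\overline{M}_S^{\mr{gp}}$ at geometric points of the fs log scheme $S$ are group envelopes of sharp fs monoids, hence torsion-free.

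For part (2), the short exact sequence from (1) exhibits $T_n(M)$ as an extension, in the category of sheaves of abelian groups on $\Skfl$, of the classical finite \'etale group scheme $Y/nY$ by the classical finite flat group scheme $G[n]$. Both summands lie in $(\mr{fin}/S)_{\mr{c}}\subset(\mr{fin}/S)_{\mr{d}}$; since $(\mr{fin}/S)_{\mr{d}}$ is closed under extensions in sheaves of abelian groups on $\Skfl$ (which follows from the analogous statement for $(\mr{fin}/S)_{\mr{r}}$ recorded in the remark after Definition \ref{defn1.5}, combined with the exactness of Cartier duality on finite kfl flat group log schemes), I conclude $T_n(M)\in(\mr{fin}/S)_{\mr{d}}$.

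For part (3), I will apply functoriality of $T_k(M)=H^{-1}(M\otimes^L\Z/k\Z)$ in $k$ to the canonical surjection $\Z/mn\Z\to\Z/n\Z$. This produces a commutative diagram with exact rows, whose top row is the sequence from (1) for $mn$ and whose bottom row is the sequence for $n$. The induced left vertical $G[mn]\to G[n]$ is multiplication by $m$, which is kfl-surjective because $G$ is $m$-divisible on the fppf (hence kfl) site; the induced right vertical $Y/mnY\to Y/nY$ is the reduction map, also surjective. The snake lemma then yields surjectivity of the middle vertical, which is the map $T_{mn}(M)\to T_n(M)$. The principal difficulty of the whole proposition lies in part (1): the clean form of the statement depends on two features specific to the logarithmic setting, namely the torsion-freeness of $\overline{M}_S^{\mr{gp}}$ (which allows replacing $G_{\mr{log}}[n]$ by $G[n]$) and the $n$-divisibility of $\Gml$ in the Kummer flat topology (which kills the right-hand term $G_{\mr{log}}/nG_{\mr{log}}$).
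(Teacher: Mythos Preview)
Your argument is correct and, for parts (1) and (3), essentially identical to the paper's: the paper also reduces (1) to the facts that $Y$ is torsion-free, that $G_{\mr{log}}[n]=G[n]$ via torsion-freeness of $G_{\mr{log}}/G\cong\mc{H}om_{S_{\mr{kfl}}}(X,\Gml/\Gm)$, and that $n\colon G_{\mr{log}}\to G_{\mr{log}}$ is kfl-surjective (the paper packages the last two as ``multiplication by $n$ on $\mc{H}om_{S_{\mr{kfl}}}(X,\Gml/\Gm)$ is an isomorphism''), and for (3) the paper likewise reduces to the two end cases $[Y\to 0]$ and $[0\to G_{\mr{log}}]$, which is exactly your snake-lemma diagram.

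The only genuine difference is in part (2). You argue abstractly that $(\mr{fin}/S)_{\mr{d}}$ is closed under extensions (via closure of $(\mr{fin}/S)_{\mr{r}}$ under extensions plus exactness of Cartier duality); the paper instead identifies the Cartier dual of $T_n(M)$ explicitly as $T_n(M^{\vee})$ for the dual log 1-motive $M^{\vee}$, and then applies (1) to $M^{\vee}$ to get $T_n(M^{\vee})\in(\mr{fin}/S)_{\mr{r}}$. Your route is shorter and self-contained; the paper's route gives the extra information of what the dual actually is, which is useful elsewhere.
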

\begin{proof}
The map $Y\xrightarrow{n}Y$ is clearly injective. Consider the following commutative diagram
$$\xymatrix{
0\ar[r] &G\ar[r]\ar[d]^n &G_{\mr{log}}\ar[r]\ar[d]^n &\mc{H}om_{S_{\mr{kfl}}}(X,\Gml/\Gm)\ar[r]\ar[d]^n &0   \\
0\ar[r] &G\ar[r] &G_{\mr{log}}\ar[r] &\mc{H}om_{S_{\mr{kfl}}}(X,\Gml/\Gm)\ar[r] &0
}$$
with exact rows. By \cite[Prop. 4.2]{kat2} we also have the following commutative diagram with exact rows
$$\xymatrix{
0\ar[r] &\Z/n\Z(1)\ar[r]\ar@{=}[d] &\Gm\ar[r]^n\ar[d] &\Gm\ar[r]\ar[d] &0   \\
0\ar[r] &\Z/n\Z(1)\ar[r] &\Gml\ar[r]^n &\Gml\ar[r] &0
}$$

From the last diagram one sees that 
\[\mc{H}om_{S_{\mr{kfl}}}(X,\Gml/\Gm)\xrightarrow{n}\mc{H}om_{S_{\mr{kfl}}}(X,\Gml/\Gm)\]
is an isomorphism. Moreover, $G\xrightarrow{n}G$ is surjective, so we get that $G[n]\cong G_{\mr{log}}[n]$ and that $G_{\mr{log}}\xrightarrow{n}G_{\mr{log}}$ is surjective. By \cite[\S 3.1]{ray2}, we get a short exact sequence 
\[0\rightarrow G[n]\rightarrow T_n(\mathbf{M})\rightarrow Y/nY\rightarrow0.\]
This finishes the proof of part (1).

For part (2), note that by \cite[Prop. 2.3]{kat4} and the short exact sequence from part (1), we have that $T_n(\mathbf{M})\in(\mr{fin}/S)_{\mr{r}}$. Let $\mathbf{M}^{\vee}$ be the dual log 1-motive of $\mathbf{M}$ (see \cite[\S 2.7]{k-k-n2}). Then we can identify the Cartier dual of $T_n(\mathbf{M})$ with $T_n(\mathbf{M}^{\vee})$. Since we also have $T_n(\mathbf{M}^{\vee})\in (\mr{fin}/S)_{\mr{r}}$, we get $T_n(\mathbf{M})\in(\mr{fin}/S)_{\mr{d}}$.

Part (3) is clearly true for the two log 1-motives $[Y\rightarrow 0]$ and $[0\rightarrow G_{\mr{log}}]$. It follows that it also holds for $\mathbf{M}$.
\end{proof}

\begin{defn}
By Proposition \ref{prop3.3}, 
\[\mathbf{M}[p^{\infty}]:=\varinjlim_nT_{p^n}(\mathbf{M})\]
is an object of $(\text{$p$-div}/S)^{\mr{log}}_{\mr{d}}$, we call it the \textbf{log $p$-divisible group associated to $\mathbf{M}$}.
\end{defn}

\begin{prop}\label{prop3.4}
Assume that $S$ admits a global chart $\alpha:P\rightarrow M_S$, where $P$ is an fs monoid. Let $\mathbf{M}=[Y\xrightarrow{u} G_{\mr{log}}]$ be a log 1-motive over $S$, such that $X$ and $Y$ are constant. Let $n$ be a positive integer.

Assume further that  there exists a bilinear map as in Proposition \ref{prop3.2} 
\[\langle-,-\rangle:X\times Y\rightarrow P^{\mr{gp}}_S,\]
such that the homomorphism $u_2:Y\xrightarrow{\beta} T_{\mr{log}}\hookrightarrow G_{\mr{log}}$,  which is induced from the pairing $\langle-,-\rangle$, gives rise to a decomposition $u=u_1+u_2$ with $u_1:Y\xrightarrow{\alpha} G\hookrightarrow G_{\mr{log}}$. Then we have the following.
\begin{enumerate}[(1)]
\item $T_n(\mathbf{M})$ is a standard extension of $Y/nY$ by $G[n]$ with respect to the given chart.
\item Let $f_1,\cdots,f_m$ (resp. $e_1,\cdots,e_r$) be a basis of $X$ (resp. $Y$), and let $f_1^*,\cdots,f_m^*$ (resp. $e_1^*,\cdots,e_r^*$) be the corresponding dual basis of $X^*=\mr{Hom}_{\Z}(X,\Z)$ (resp. $Y^*=\mr{Hom}_{\Z}(Y,\Z)$). The pairing $\langle, \rangle:X\times Y\rightarrow P^{\mr{gp}}_S$ gives rise to a canonical pseudo-monodromy of $T_n(M)$, namely the element $\sum_{i,j}\delta_{ij}\otimes \langle f_i,e_j\rangle$ in the group
\[\mr{Hom}_S(Y/nY(1),T[n])\otimes P^{\mr{gp}}=\mr{Hom}_S(X/nX,Y^*/nY^*)\otimes  P^{\mr{gp}}.\]
Here $\delta_{ij}$ denotes the map $\bar{f}_k\mapsto \begin{cases}0, &\text{if $k\neq i$} \\  \bar{e}_j^*, &\text{if $k=i$.} \end{cases}$
\item For varying $n$, the pseudo-monodromies from (2) are compatible with each other. 
\end{enumerate}
\end{prop}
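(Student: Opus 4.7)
The plan is to reduce to the toric part using the decomposition $u=u_1+u_2$. Setting $M_1:=[Y\xrightarrow{u_1}G\hookrightarrow G_{\mr{log}}]$ and $M_2:=[Y\xrightarrow{u_2}T_{\mr{log}}\hookrightarrow G_{\mr{log}}]$, additivity of $u\mapsto T_n([Y\xrightarrow{u}G_{\mr{log}}])$ in the middle arrow yields
$$T_n(M)=T_n(M_1)+_{\mr{Baer}}T_n(M_2)$$
as extensions of $Y/nY$ by $G_{\mr{log}}[n]=G[n]$, and $T_n(M_2)$ is the pushforward along $T[n]\hookrightarrow G[n]$ of the purely toric extension $T_n^T(M_2):=T_n([Y\xrightarrow{u_2}T_{\mr{log}}])$ of $Y/nY$ by $T[n]$. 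Since $u_1$ factors through $G$, the extension $T_n(M_1)$ is a classical finite flat group scheme, so it lies in $\mathfrak{Ext}_{S_{\mr{fl}}}(Y/nY,G[n])$, and in particular the connecting map of (\ref{eq2.4}) vanishes for $T_n(M_1)$. Hence it suffices to express $T_n^T(M_2)$ as a standard extension with the claimed monodromy.

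Using the bases of $X$ and $Y$, one has $T_{\mr{log}}=\prod_i\Gml$ and $u_2(e_j)=(\langle f_i,e_j\rangle)_i$, so the class of $T_n^T(M_2)$ in
$$\mr{Ext}^1_{S_{\mr{kfl}}}(Y/nY,T[n])=\prod_{i,j}\mr{Ext}^1_{S_{\mr{kfl}}}(\Z/n\Z,\mu_n)$$
is the collection $(E_{\langle f_i,e_j\rangle})_{i,j}$, each factor being the $n$-torsion extension of the elementary log $1$-motive $M_{\langle f_i,e_j\rangle}$ of Subsection \ref{subsec2.1}. On the other hand, the Baer sum of the pushouts $\delta_{ij,*}(E_{\langle f_i,e_j\rangle}\otimes_{\Z/n\Z}Y/nY)$ has exactly this class, because each $\delta_{ij}:Y/nY(1)\to T[n]$ picks out the $(i,j)$-coordinate under the canonical identification
$$\mr{Hom}_S(Y/nY(1),T[n])\otimes P^{\mr{gp}}\cong\mr{Hom}_S(X/nX,Y^*/nY^*)\otimes P^{\mr{gp}}.$$
Unwinding the definition (\ref{eq2.2}) of $\Phi_2$ gives
$$T_n^T(M_2)=\Phi_2\Bigl(\sum_{i,j}\delta_{ij}\otimes\langle f_i,e_j\rangle\Bigr),$$
which after composing with $T[n]\hookrightarrow G[n]$ and Baer-summing with the classical $T_n(M_1)$ establishes both parts (1) and (2) simultaneously via Definition \ref{defn2.1}.

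For part (3), the element $\sum_{i,j}\delta_{ij}\otimes\langle f_i,e_j\rangle$ is visibly the reduction modulo $n$ of a canonical element of $\mr{Hom}_S(X,Y^*)\otimes P^{\mr{gp}}$ encoding the pairing itself, independently of $n$. The transition $T_{mn}(M)\to T_n(M)$ from Proposition \ref{prop3.3}(3) is induced by multiplication by $m$ on $M$; through the atomic decomposition this becomes the natural reduction map on each $E_{\langle f_i,e_j\rangle}$-factor, and on the monodromy side becomes mod-$n$ reduction of each $\delta_{ij}$, giving the stated compatibility.

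The principal obstacle is the display identification $T_n^T(M_2)=\Phi_2(\sum_{i,j}\delta_{ij}\otimes\langle f_i,e_j\rangle)$ in part (2). Its verification mirrors the diagram chase of Lemma \ref{lem2.1}: one must track the splitting of $X$ and $Y$ into rank-one pieces, the $\mu_n$-duality implicit in the $\mr{Hom}$ identification above, and the commutation of the pushout in (\ref{eq2.2}) with Baer sum and tensor product. Once this bookkeeping is in place, parts (1), (2), and (3) follow together.
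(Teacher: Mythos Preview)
Your overall strategy matches the paper's: decompose $u=u_1+u_2$, observe $T_n(M)=T_n(M_1)+_{\mr{Baer}}T_n(M_2)$ with $T_n(M_1)$ classical, and reduce to the purely toric piece via the pushout along $T[n]\hookrightarrow G[n]$. Where you diverge is in the treatment of the toric case. You attempt to identify $T_n^T(M_2)$ \emph{directly} with $\Phi_2\bigl(\sum_{i,j}\delta_{ij}\otimes\langle f_i,e_j\rangle\bigr)$ by decomposing both sides into rank-one factors in $\prod_{i,j}\mr{Ext}^1_{S_{\mr{kfl}}}(\Z/n\Z,\mu_n)$ and matching coordinates. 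The paper instead invokes the criterion of Proposition~\ref{prop2.1}: it computes the connecting map $\delta:Y/nY\to R^1\varepsilon_*T[n]$ explicitly via the torsor description $\delta(\bar e_j)=\{a\in T_{\mr{log}}:a^n=u_2(e_j)\}$, reads off $\delta(\bar e_j)=\sum_i\bar f_i^*\otimes\langle f_i,e_j\rangle$ under the identification $R^1\varepsilon_*T[n]\cong X^*/nX^*\otimes(\Gml/\Gm)_{S_{\mr{fl}}}$, and observes that this manifestly lifts through $X^*/nX^*\otimes P^{\mr{gp}}$. Proposition~\ref{prop2.1} then simultaneously yields standardness and the claimed monodromy, and the general $G$ follows by functoriality of the connecting map under pushout.

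Your route is correct but trades the single invocation of Proposition~\ref{prop2.1} for the coordinate bookkeeping you flag in your final paragraph: checking that each $\delta_{ij,*}(E_{\langle f_i,e_j\rangle}\otimes Y/nY)$ really contributes only the $(i,j)$-slot under the product splitting, and that Baer sum assembles these correctly. The paper's approach is more economical precisely because Proposition~\ref{prop2.1} and Lemma~\ref{lem2.1} have already packaged this bookkeeping once and for all; your approach effectively reproves a special case of those results in coordinates.
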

\begin{proof}
Let $\mathbf{M}_1:=[Y\xrightarrow{u_1} G_{\mr{log}}]$, $\mathbf{M}_{\alpha}:=[Y\xrightarrow{\alpha} G]$, $M_2=[Y\xrightarrow{u_2} G_{\mr{log}}]$, and $\mathbf{M}_{\beta}:=[Y\xrightarrow{\beta} T_{\mr{log}}]$. Then $T_n(\mathbf{M}_1)$ is canonically isomorphic to $T_n(\mathbf{M}_{\alpha})$, $T_n(M_2)$ is given by the push-out of $T_n(M_{\beta})$ along the canonical map $T[n]\hookrightarrow G[n]$, and $T_n(\mathbf{M})=T_n(\mathbf{M}_1)+_{\mr{Baer}}T_n(\mathbf{M}_2)$. Since $T_n(\mathbf{M}_1)=T_n(\mathbf{M}_{\alpha})\in(\mr{fin}/S)_{\mr{c}}$, we are reduced to show that $T_n(\mathbf{M}_2)$ is a standard extension of $Y/nY$ by $G[n]$.

We first deal with the case that $G=T$. Applying the functor $\varepsilon_*$ to the short exact sequence $0\rightarrow T[n]\rightarrow T_n(\mathbf{M}_2)\rightarrow Y/nY\rightarrow 0$ on $(\mr{fs}/S)_{\mr{kfl}}$, we get a long exact sequence
$$0\rightarrow T[n]\rightarrow T_n(\mathbf{M}_2)\rightarrow Y/nY\xrightarrow{\delta} R^1\varepsilon_*T[n]$$
on $(\mr{fs}/S)_{\mr{fl}}$.
For any $1\leq j\leq r$, $\delta(\bar{e}_j)$ is represented by the $T[n]$-torsor 
$$\{a\in T_{\mr{log}}\mid a^n=u_2(e_j)\}.$$
Let $\bar{u}$ be the composition $Y\xrightarrow{u_2}T_{\mr{log}}\rightarrow T_{\mr{log}}/T\cong\mc{H}om_{S_{\mr{kfl}}}(X,\Gml/\Gm)$. Then $\bar{u}(e_j)(f_i)=p_{ij}$, where $p_{ij}\in P^{\mr{gp}}$ is such that $\langle f_i,e_j\rangle=p_{ij}$ for each $1\leq i\leq m$. Therefore, under the identification 
\begin{align*}
R^1\varepsilon_*T[n]&\cong\mc{H}om_{S_{\mr{fl}}}(\Z/n\Z(1),T[n])\otimes (\Gml/\Gm)_{S_{\mr{fl}}} \\&\cong X^*/nX^*\otimes (\Gml/\Gm)_{S_{\mr{fl}}},
\end{align*}
we have $\delta(\bar{e}_j)=\sum_{i=1}^m\bar{f}_i^*\otimes p_{ij}$. It follows that $\delta$ can be lifted canonically to a homomorphism 
$$\bar{\delta}:Y/nY\rightarrow \mc{H}om_{S_{\mr{fl}}}(\Z/n\Z(1),T[n])\otimes P^{\mr{gp}}=X^*/nX^*\otimes P^{\mr{gp}},\bar{e}_j\mapsto\sum_{i=1}^m\bar{f}_i\otimes p_{ij}.$$ Therefore $T_n(\mathbf{M}_2)$ is standard with respect to the given chart by Proposition \ref{prop2.1}.

In general, applying the functor $\varepsilon_*$ to the push-out diagram
$$\xymatrix{
0\ar[r] &T[n]\ar[r]\ar[d] &T_n(\mathbf{M}_{\beta})\ar[r]\ar[d] &Y/nY\ar[r]\ar@{=}[d] &0  \\
0\ar[r] &G[n]\ar[r] &T_n(\mathbf{M}_2)\ar[r] &Y/nY\ar[r] &0
},$$
we get the following commutative diagram
$$\xymatrix{
0\ar[r] &T[n]\ar[r]\ar[d] &T_n(\mathbf{M}_{\beta})\ar[r]\ar[d] &Y/nY\ar[r]\ar@{=}[d] &R^1\varepsilon_*T[n]\ar[d]  \\
0\ar[r] &G[n]\ar[r] &T_n(\mathbf{M}_2)\ar[r] &Y/nY\ar[r] &R^1\varepsilon_*G[n]
}$$
with exact rows. Therefore we are reduced to the case $G=T$ by Proposition \ref{prop2.1}. This finishes the proof of part (1). The searched canonical pseudo-monodromy in part (2) is given by the canonical lift $\bar{\delta}$ of the map $\delta$. The compatibility in part (3) is clear from the construction of the canonical pseudo-monodromy.
\end{proof}
\begin{rmk}\label{rmk3.1}
If we have a decomposition of $\mathbf{M}$ into $u_{1}$ and $u_{2}$ as above, it is easy to see that for any $n\geq 1$ we also get a decomposition $T_{n}(\mathbf{M})=T_{n}(u_{1})+_{\mr{Baer}} T_{n}(u_{2})$, where $T_{n}(u_{1})$ is classical. The proposition shows that this decomposition is the same as the one obtained from the proof of proposition \ref{prop2.1}, if one uses the lift 
\[\bar{\delta}_{T_{n}(\mathbf{M})}:Y/nY\xrightarrow{\bar{\delta}} \mc{H}om_{S_{\mr{fl}}}(\Z/n\Z(1),T[n])\otimes P^{\mr{gp}}\to \mc{H}om_{S_{\mr{fl}}}(\Z/n\Z(1),G[n])\otimes P^{\mr{gp}}\]
of $Y/nY\to R^1\varepsilon_*G[n]$.

Assume that the underlying scheme of $S$ is the spectrum of a henselian local ring $R$ and $P\to M_S$ is a global chart which is neat at the closed point $s$ of $S$. By the same argument as in Lemma \ref{lem2.2}, we have 
\[\mr{Hom}_S(X\otimes_{\Z}Y,P^{\mr{gp}})\xrightarrow{\cong}\mr{Hom}_S(X\otimes_{\Z}Y,(\Gml/\Gm)_{S_{\mr{fl}}}),\]
therefore the lifting of the canonical pairing is uniquely determined by the choice of the chart, and further the pure log part $u_2:Y\to T_{\mr{log}}\hookrightarrow G_{\mr{log}}$ is uniquely determined by the choice of the chart. When $R$ is further a complete discrete valuation ring, choosing a uniformizer determines a neat chart, and therefore determines a decomposition $u=u_1+u_2$. Such a determination is compatible with Raynaud's decomposition of strict 1-motives over $\mr{Frac}(R)$ with integral geometric monodromy, see \cite[Cor. 4.5.1]{ray2} and \cite[\S 4.2]{zha5}. In the case of a DVR endowed with its canonical log structure, the compatibility with the work of Kato has also already been treated in \cite{b-c-c1}. 
\end{rmk}

\section{The Serre-Tate theorem for log abelian varieties with constant degeneration}
In this section we want to show how one can adapt Drinfeld's proof of the classical Serre-Tate theorem to the situation of log abelian varieties with constant degeneration using the results from the previous sections.

\subsection{Log abelian varieties with constant degeneration}
We first recall the notion of a log abelian variety with constant degeneration from \cite[\S 3]{k-k-n2}. Let $S$ be an fs log scheme. Let $\mathbf{M}=[Y\xrightarrow{u} G_{\mr{log}}]$ be a log 1-motive over $S$ with its dual log 1-motive $\mathbf{M}^*=[X\xrightarrow{u^*} G^{*}_{\mr{log}} ]$. Consider the subsheaf 
\[\mathcal{H}om(X, \Gml/\Gm)^{(Y)}\]
of $\mathcal{H}om(X, \Gml/\Gm)$ on the site $(\mr{fs}/S)_{\mr{\acute{e}t}}$, whose sections over $V\in (\mr{fs}/S)$ are defined to be:
\begin{center}
$\{ \phi\in \mathcal{H}om(X, \Gml/\Gm)(V) :$ for any $v\in V$ and $x\in X_{\bar{v}}$, there exist $y_{v, x}, y_{v, x}'\in Y_{\bar{v}}$, such that $\langle x,y_{v, x}\rangle_{\bar{v}} \vert \phi(x)_{\bar{v}} \vert \langle x,y'_{v, x}\rangle_{\bar{v}} \}$.
\end{center}
Here $\langle - , - \rangle:X\times Y\to \Gml/\Gm$ denotes the monodromy pairing of $M$. We then define $G_{\mr{log}}^{(Y)}$ as the preimage of $\mathcal{H}om(X, \Gml/\Gm)^{(Y)}$ under the natural map 
\begin{center}
$G_{\mr{log}}\to G_{\mr{log}}/G\cong \mathcal{H}om(X, \Gml/\Gm)$.
\end{center}
A log abelian variety with constant degeneration over $S$ is defined to be a sheaf of abelian groups on $(\mr{fs}/S)_{\mr{\acute{e}t}}$, which is isomorphic to $G_{\mr{log}}^{(Y)}/Y$ for a pointwise polarizable log 1-motive $[Y\xrightarrow{u} G_{\mr{log}}]$. For the notion of being pointwise polarizable for a log 1-motive we refer to \cite[Definition 2.8]{k-k-n2}. One of the main results of \cite{k-k-n2} is then the following:
\begin{thm}\cite[Theorem 3.4]{k-k-n2}
The association 
\begin{center}
$[Y\xrightarrow{u} G_{\mr{log}}] \mapsto G^{(Y)}/Y$
\end{center}
defines an equivalence of categories from pointwise polarizable log 1-motives over $S$ to log abelian varieties with constant degeneration over $S$.
\end{thm}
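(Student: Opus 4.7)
Essential surjectivity is built into the definition of a log abelian variety with constant degeneration (every such $A$ is by definition isomorphic to some $G_{\mr{log}}^{(Y)}/Y$), so the substance of the theorem is well-definedness of the functor on morphisms together with full faithfulness. For well-definedness I would first verify that $\mathcal{H}om(X,\Gml/\Gm)^{(Y)}$ is a subsheaf of abelian groups of $\mathcal{H}om(X,\Gml/\Gm)$: if $\phi_1,\phi_2$ satisfy the sandwich condition with bounds $(y_i,y'_i)$, then $\phi_1+\phi_2$ is sandwiched by $(y_1+y_2,\, y'_1+y'_2)$ and $-\phi$ by $(-y',-y)$, so $G_{\mr{log}}^{(Y)}$ is a subsheaf of abelian groups of $G_{\mr{log}}$ containing the image of $u$ (since $u(y)$ projects to $\langle y,-\rangle$, which is trivially self-sandwiched). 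Functoriality then follows because a morphism $(f_Y,f_G)\colon M_1\to M_2$ of log 1-motives is by definition compatible with the duality pairings $X_i\times Y_i\to\Z$, hence preserves the sandwich condition.

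Faithfulness I would check by reducing to geometric points, exploiting the pointwise polarization. At each geometric point $\bar s$ a polarization provides an injective homomorphism $Y_{\bar s}\hookrightarrow X_{\bar s}$, and one recovers $(f_G)_{\bar s}$ from the action of $f_{\bar s}$ on the classically visible subsheaf $G_{\bar s}\subset A_{\bar s}$, while the induced map on the ``tropical'' quotient $(\mathcal{H}om(X,\Gml/\Gm)^{(Y)}/Y)_{\bar s}$ combined with the polarization-induced injection recovers $(f_Y)_{\bar s}$; so vanishing of $f$ on $A$ forces both $f_Y$ and $f_G$ to vanish.

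Fullness is the main obstacle, and my plan is to reconstruct the 1-motive datum intrinsically from $A$. First, $G_i\subset A_i$ can be recovered as the kernel of the canonical projection $A_i\twoheadrightarrow\mathcal{H}om(X_i,\Gml/\Gm)^{(Y_i)}/\mathcal{H}om(X_i,\Gml/\Gm)$; this characterization is intrinsic to $A_i$, so any morphism $f\colon A_1\to A_2$ automatically restricts to a morphism $f_G\colon G_1\to G_2$ of classical semiabelian schemes. The character lattice $X_i$ is then read off from the toric part of $G_i$, and $Y_i$ is reconstructed from the canonical ``universal cover'' $G_{\mr{log},i}^{(Y_i)}\twoheadrightarrow A_i$, whose kernel is precisely $Y_i$. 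The key technical point — and the reason the pointwise polarization hypothesis is essential — is that a priori $A$ could carry extra endomorphisms that do not come from morphisms of $M$ (for example by modifying $Y$ inside $G_{\mr{log}}^{(Y)}$ in a way that leaves the quotient unchanged). Pointwise polarization rigidifies the embedding $Y\hookrightarrow G_{\mr{log}}^{(Y)}$ by tying $Y$ to a nondegenerate pairing with $X$, forcing any self-map of $A$ to preserve the sublattice $Y$. Once $f$ has been lifted étale-locally over geometric points to a map $G_{\mr{log},1}^{(Y_1)}\to G_{\mr{log},2}^{(Y_2)}$ carrying $Y_1$ into $Y_2$ (the hardest step, whose uniqueness is guaranteed by the polarization), standard étale descent over $S$ then assembles the required morphism of log 1-motives.
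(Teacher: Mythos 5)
This theorem is imported verbatim from \cite[Theorem 3.4]{k-k-n2}; the present paper gives no proof of it, so there is nothing internal to compare your argument against, and I can only judge it on its own terms. On those terms there are genuine gaps, concentrated (as you anticipate) in fullness. Your plan is to recover the 1-motive data ``intrinsically'' from $A$, but the characterizations you offer are circular: the projection whose kernel is supposed to recover $G_i$ (which, as written, is the quotient of $\mathcal{H}om(X_i,\Gml/\Gm)^{(Y_i)}$ by the \emph{larger} sheaf $\mathcal{H}om(X_i,\Gml/\Gm)$ and so is not even well formed --- presumably you mean $A_i\twoheadrightarrow \mathcal{H}om(X_i,\Gml/\Gm)^{(Y_i)}/Y_i$) and the ``universal cover'' $G_{\mr{log},i}^{(Y_i)}\twoheadrightarrow A_i$ with kernel $Y_i$ are both defined \emph{in terms of the chosen presentation} of $A_i$ as $G_{\mr{log},i}^{(Y_i)}/Y_i$. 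The entire content of fullness is to show that an arbitrary morphism of sheaves $f\colon A_1\to A_2$ is automatically compatible with these presentations; asserting that the presentations are intrinsic assumes exactly that. What is actually needed, and what the source proof supplies, are concrete vanishing statements: for instance $\mr{Hom}_S\bigl(G_1,\ \mathcal{H}om(X_2,\Gml/\Gm)^{(Y_2)}/Y_2\bigr)=0$, so that $f(G_1)\subseteq G_2$, and then a lifting of the induced map on the quotients $\mathcal{H}om(X_1,\Gml/\Gm)^{(Y_1)}/Y_1\to \mathcal{H}om(X_2,\Gml/\Gm)^{(Y_2)}/Y_2$ to the covers, with the ambiguity of the lift controlled exactly by $Y_2$ (via the local vanishing of the relevant $\mr{Ext}^1$ and $H^1$ groups, in the spirit of the arguments used in Propositions \ref{prop3.1} and \ref{prop3.2} above). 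You flag this lifting as ``the hardest step'' but give no mechanism for it beyond an appeal to polarization.

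That appeal is itself unsubstantiated: you claim pointwise polarizability ``rigidifies the embedding $Y\hookrightarrow G_{\mr{log}}^{(Y)}$'' and ``forces any self-map of $A$ to preserve the sublattice $Y$,'' but no argument is given for how the existence of an isogeny-like pairing between $Y$ and $X$ at each geometric point produces this rigidity, and this is precisely where the hypothesis must do its work. The faithfulness sketch has a milder version of the same problem: reducing to geometric points is reasonable for the \'etale-locally constant lattices, but recovering $(f_Y)_{\bar s}$ from the action on the ``tropical quotient'' again presupposes that $f$ respects the (not yet shown to be canonical) exact sequence $0\to G\to A\to \mathcal{H}om(X,\Gml/\Gm)^{(Y)}/Y\to 0$. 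The parts of your proposal that do hold up are the trivial essential surjectivity and the verification that the sandwich condition defines a subgroup sheaf compatible with morphisms of log 1-motives; the core of the theorem remains unproved.
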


\begin{rmk}\label{rmk5.1}
Pointwise polarizable log $1$-motives satisfy effective \'etale descent. For this assume that $\tilde{S}\to S$ is \'etale and $[\tilde{u}:\tilde{Y}\to \tilde{G}_{\mr{log}}]$ is a pointwise polarizable log $1$-motive on $\tilde{S}$, with a gluing datum on $\tilde{S}\times_{S}\tilde{S}$. Then by \cite[Proposition 2.5]{k-k-n2} one also gets a descent datum on the group scheme $\tilde{G}$, which is an extension of an abelian scheme by a torus. Hence $\tilde{G}$ descends to such an extension $G$ over $S$. Moreover, $\tilde{Y}$ also descends to some $Y$ over $S$ and then $\tilde{u}$ glues to a morphism $[u:Y\to G_{\mr{log}}]$. By the definition of being pointwise polarizable, $u$ is still pointwise polarizable.

By this one then also gets effective \'etale descent for log abelian varieties with constant degeneration.
\end{rmk} 
First we remark that the log $p$-divisible group of a log abelian variety with constant degeneration coincides with the one coming from the log 1-motive.
\begin{lem}\label{cor3.1}
Let $A$ be a weak log abelian variety with constant degeneration over $S$, see \cite[\S 1.7]{k-k-n4}. By \cite[\S 1.7]{k-k-n4}, there exists an admissible and nondegenerate
log 1-motive $\mathbf{M}=[Y\rightarrow G_{\mr{log}}]$ such that $A=G_{\mr{log}}^{(Y)}/Y$. Then we have the following.
\begin{enumerate}[(1)]
\item For any positive integer $n$, $A[n]$ is canonically identified with $T_n(\mathbf{M})$.
\item $A[p^{\infty}]:=\bigcup_{n>0}A[p^n]$ is canonically identified with $\mathbf{M}[p^{\infty}]$.
\end{enumerate}
\end{lem}
\begin{proof}
Consider the following two commutative diagrams
$$\xymatrix{
0\ar[r] &Y\ar[r]\ar[d]^n &G_{\mr{log}}^{(Y)}\ar[r]\ar[d]^n &A\ar[r]\ar[d]^n &0  \\
0\ar[r] &Y\ar[r] &G_{\mr{log}}^{(Y)}\ar[r] &A\ar[r] &0 
}$$
and
$$\xymatrix{
0\ar[r] &Y\ar[r]\ar[d]^n &G_{\mr{log}}\ar[r]\ar[d]^n &G_{\mr{log}}/Y\ar[r]\ar[d]^n &0  \\
0\ar[r] &Y\ar[r] &G_{\mr{log}}\ar[r] &G_{\mr{log}}/Y\ar[r] &0 
}$$
with exact rows. By Lemma \cite[Lem. 3.2]{zha3}, we have that $G_{\mr{log}}^{(Y)}\xrightarrow{n}G_{\mr{log}}^{(Y)}$ is surjective with kernel $G[n]$. We also have that $G_{\mr{log}}\xrightarrow{n}G_{\mr{log}}$ is surjective with kernel $G[n]$. It follows that $A[n]$ and $(G_{\mr{log}}/Y)[n]$ fit into the following commutative diagram 
$$\xymatrix{
0\ar[r] &G[n]\ar[r]\ar@{=}[d] &A[n]\ar[r]\ar[d] &Y/nY\ar[r]\ar@{=}[d] &0  \\
0\ar[r] &G[n]\ar[r] &(G_{\mr{log}}/Y)[n]\ar[r] &Y/nY\ar[r] &0 
}$$
with exact rows. Hence $A[n]$ is canonically identified with $(G_{\mr{log}}/Y)[n]$. The canonical quasi-isomorphism 
\[[Y\rightarrow G_{\mr{log}}]\rightarrow G_{\mr{log}}/Y\]
gives rise to canonical isomorphism $T_n(\mathbf{M})\cong (G_{\mr{log}}/Y)[n]$. It follows that $A[n]$ is canonically identified with $T_n(\mathbf{M})$.
\end{proof}

\begin{lem}\label{lem5.2}
Let $F_1\in (\mr{fin}/S)_{\mr{c}}$ and let $F_2$ be a subobject of $F_1$ in $(\mr{fin}/S)_{\mr{r}}$. Then we have $F_2\in (\mr{fin}/S)_{\mr{c}}$. In other words, the category $(\mr{fin}/S)_{\mr{c}}$ is closed under subobjects in $(\mr{fin}/S)_{\mr{r}}$.
\end{lem}
\begin{proof}
We need to show that the log scheme representing $F_2$ has log structure induced from $S$. This can be verified on each fiber over $S$. Hence we are reduced to the case that $S$ is a log point, which is \cite[Lem. 3.1]{zha3}
\end{proof}

The following lemma generalizes \cite[Lem. 3.2]{zha3}.

\begin{lem}\label{lem5.3}
Let $\mathbf{M}=[Y\rightarrow G_{\mr{log}}],\mathbf{M}'=[Y'\rightarrow G'_{\mr{log}}]$ be two non-degenerate log 1-motives over $S$, $(f_{-1},f_0):\mathbf{M}\rightarrow \mathbf{M}'$ a homomorphism of log 1-motives, and $f_{\mr{c}}:G\rightarrow G'$ the map induced by $f_0$. Let $X$ (resp. $X'$) be the character group of the torus part $T$ (resp. $T'$) of $G$ (resp. $G'$), $\mc{Q}$ (resp. $\mc{Q}'$) the sheaf $\mc{H}om(X,\Gml/\Gm)^{(Y)}$ (resp. $\mc{H}om(X',\Gml/\Gm)^{(Y')}$), $f_{\mr{l}}:X'\rightarrow X$ the map induced by $f_{\mr{c}}$, and $\tilde{f}_{\mr{d}}:\mc{Q}\rightarrow \mc{Q}'$ the map induced by $f_{\mr{l}}$. If $f_{\mr{c}}$ is an isogeny, then the map $\tilde{f}:G_{\mr{log}}^{(Y)}\rightarrow G'^{(Y')}_{\mr{log}}$ induced by $(f_{-1},f_0)$ is surjective with kernel $\mr{Ker}(f_{\mr{c}})$, and the map $\tilde{f}_{\mr{d}}$ is bijective.
\end{lem}
\begin{proof}
Without loss of generality, we may assume that the underlying scheme of $S$ is noetherian. Then the proof of \cite[Lem. 3.2]{zha3} works also here.
\end{proof}

The following proposition generalizes \cite[Prop. 3.2]{zha3}.
\begin{prop}\label{prop5.1}
Let $A$ be a log abelian variety with constant degeneration over $S$, and $F\in (\mr{fin}/S)_{\mr{r}}$ a subgroup sheaf of $A$. Let $n$ be a positive integer which kills $F$, and let $\mathbf{M}=[Y\xrightarrow{u}G_{\mr{log}}]$ be the pointwise polarizable log 1-motive corresponding to $A$. Let $F':=\mr{ker}(F\to A[n]\to Y/nY)$ and $F'':=F/F'$. Assume that $F'\in (\mr{fin}/S)_{\mr{r}}$. Then
\begin{enumerate}[(1)]
\item both $F'$ and $F''$ lie in $(\mr{fin}/S)_{\mr{c}}$;
\item there exists a pointwise polarizable log 1-motive $\mathbf{M}'=[Y'\xrightarrow{u'}G'_{\mr{log}}]$ and a morphism $(f_{-1},f_0):\mathbf{M}\to \mathbf{M}'$ such that $F'=\mr{ker}(G\xrightarrow{f_{\mr{c}}}G')$ and $F''\cong Y'/Y$;
\item the morphism $(f_{-1},f_0)$ induces an isogeny $A\to A'$ of log abelian varieties with constant degeneration with kernel $F$, where $A'$ is the one associated to $\mathbf{M}'$.
\end{enumerate}
\end{prop}
\begin{proof}
The proof is very similar to the proof of \cite[Prop. 3.2]{zha3}. By construction, $F'$ and $F''$ fit into the following commutative diagram
\[\xymatrix{
0\ar[r] &F'\ar[r]\ar@{^(->}[d] &F\ar[r]\ar@{^(->}[d] &F''\ar[r]\ar@{^(->}[d] &0 \\
0\ar[r] &G[n]\ar[r] &A[n]\ar[r] &Y/nY\ar[r] &0
}.\]
Then part (1) follows from Lemma \ref{lem5.2}, using that $G[n]$ and $Y/nY$ are classical.

For part (2) let $E$ be the pullback of $G_{\mr{log}}^{(Y)}$ along $F\subset A$
$$\xymatrix{
0\ar[r] &Y\ar[r]\ar@{=}[d] &E\ar[r]\ar@{^{(}->}[d] &F\ar[r]\ar@{^{(}->}[d] &0 \\
0\ar[r] &Y\ar[r]  &G_{\mr{log}}^{(Y)}\ar[r]  &A \ar[r] &0,
}$$
and let $E_{\mr{tor}}$ be the torsion subsheaf of $E$ and $Y'=E/E_{\mr{tor}}$. Now, $$G_{\mr{log}}^{(Y)}/G=\mc{H}om(X,\Gml/\Gm)^{(Y)}$$
is torsion-free and $E_{\mr{tor}}$ maps into $G$. Therefore, we have $E_{\mr{tor}}=F'$ and $Y'/Y=F''\subset Y/nY$, and $Y'$ is \'etale locally constant. Let $G':=G/E_{\mr{tor}}=G/F'$. Then the inclusion $E\hookrightarrow G_{\mr{log}}^{(Y)}\hookrightarrow G_{\mr{log}}$ yields a homomorphism $Y'\rightarrow G_{\mr{log}}/F'=G'_{\mr{log}}$ by taking the quotients by $F'$. In this way, we get a log 1-motive $\mathbf{M}':=[Y'\xrightarrow{u'} G'_{\mr{log}}]$ together with a homomorphism 
\[(f_{-1},f_0):\mathbf{M}:=[Y\xrightarrow{u} G_{\mr{log}}]\rightarrow \mathbf{M}'.\] 

To show that $\mathbf{M}'$ is pointwise polarizable, we may assume that $S$ is a log point with its underlying field separably closed. The existence of a polarization in this case is shown in the proof of \cite[Prop. 3.2]{zha3}. Let now $A'$ be the log abelian variety with constant degeneration associated to $\mathbf{M}'$. Then by \cite[Thm. 3.4]{k-k-n2} the homomorphism $(f_{-1},f_0)$ gives rise to a homomorphism $f:A\rightarrow A'$ of log abelian varieties with constant degeneration, which fits into a commutative diagram 
\[\xymatrix{
0\ar[r] &Y\ar[r]\ar[d]^{f_{-1}} &G^{(Y)}_{\mr{log}}\ar[r]\ar[d]^{\tilde{f}} &A\ar[r]\ar[d]^f &0 \\
0\ar[r] &Y'\ar[r]  &G'^{(Y')}_{\mr{log}}\ar[r]  &A'\ar[r] &0
}\]
with exact rows. It then follows from Lemma \ref{lem5.3} that $f$ is an isogeny with kernel $F$.
\end{proof}

\subsection{Serre-Tate theorem}
In this subsection, we closely follow \cite{katz1}.

Let $R$ be a ring which is killed by an integer $N\geq 1$, and $I$ an ideal of $R$ satisfying $I^{v+1}=0$. Let $S$ be an fs log scheme with its underlying scheme $\Spec R$, $(\mr{fs}/S)$ the category of fs log schemes over $S$, and $(\mr{fsAff}/S)$ the full subcategory of $(\mr{fs}/S)$ consisting of fs log schemes whose underlying scheme is affine. We denote $R/I$ (resp. $\Spec(R/I)$) by $R_0$ (resp. $S_0$), and we endow $S_0$ with the induced log structure from $S$. 

\begin{defn}
Let $G$ be a functor from $(\mr{fsAff}/S)$ to the category of abelian groups. The subgroup functor $G_I$ of $G$ is defined by 
$$G_I(U)=\mr{ker}(G(U)\rightarrow G(U\otimes_RR_0))$$
for $U\in (\mr{fsAff}/S)$, where $U\otimes_RR_0$ is endowed with the induced log structure from $U$. 

The subgroup functor $\hat{G}$ of $G$ is defined by
$$\hat{G}(U)=\mr{ker}(G(U)\rightarrow G(U^{\mr{red}})),$$
for $U\in (\mr{fsAff}/S)$, where $U^{\mr{red}}$ is the reduced closed subscheme of $U$ endowed with the induced log structure from $U$.
\end{defn}

\begin{lem}\label{lem5.4}
Let $G$ be a commutative formal Lie group over $R$, and we endow $G$ with the log structure induced from $S$. Then the subgroup functor $G_I$ is killed by $N^v$.
\end{lem}
\begin{proof}
See \cite[Lem. 1.1.1]{katz1}.
\end{proof}

\begin{lem}\label{lem5.5}
Let $G$ be a sheaf of abelian groups on $(\mr{fs}/S)_\mr{{kfl}}$ such that $\hat{G}$ is Kummer log flat locally representable by a classical formal Lie group over $R$. Then $N^v$ kills $G_I$.
\end{lem}
\begin{proof}
See \cite[Lem. 1.1.2]{katz1}.
\end{proof}

\begin{lem}\label{lem5.6}
Let $G$ and $H$ be two sheaves of abelian groups on $(\mr{fs}/S)_\mr{{kfl}}$ such that:
\begin{enumerate}[(a)]
\item $G$ is $N$-divisible;
\item $\hat{H}$ is Kummer log flat locally representable by a classical formal Lie group;
\item $H$ is formally log smooth.
\end{enumerate}
Let $G_0$ (resp. $H_0$) denote the pullback of $G$ (resp. $H$) to $(\mr{fs}/S_0)_\mr{{kfl}}$. Then we have:
\begin{enumerate}[(1)]
\item the groups $\mr{Hom}_S(G,H)$ and $\mr{Hom}_{S_0}(G_0,H_0)$ have no $N$-torsion;
\item The natural map ``reduction mod $I$'' $\mr{Hom}_S(G,H)\rightarrow\mr{Hom}_{S_0}(G_0,H_0)$ is injective;
\item for any homomorphism $f_0:G_0\rightarrow H_0$, there exists a unique homomorphism $``N^vf'':G\rightarrow H$ which lifts $N^vf_0$;
\item in order that a homomorphism $f_0:G_0\rightarrow H_0$ lifts to a (necessarily unique) homomorphism $f:G\rightarrow H$, it is necessary and sufficient that the homomorphism $``N^vf'':G\rightarrow H$ annihilates the subgroup $G[N^v]:=\mr{ker}(G\xrightarrow{N^v}G)$ of $G$.
\end{enumerate}
\end{lem}
\begin{proof}
Properties (1), (2), (4) follow from Lemma \ref{lem5.5} just as in \cite[Lem. 1.1.3]{katz1} . Only (3) requires a bit more care. Let $T$ be an fs log scheme over $S$, and let $T_{0}$ be the closed subscheme defined by $I$ together with the induced log structure from $T$.  We will construct a map $G(T)\to H(T)$ lifting $N^{v}f_{0}$ and its uniqueness will follow from (2).

For any  $t\in G(T)$, let $t_{0}\in G(T_{0})$ be its reduction mod $I$. Then by formal log smoothness of $H$ there is an \'etale  cover $\tilde{T}\to T$ such that the image of $f_{0}(t_{0})$ in $H(\tilde{T}_{0})$ lifts to some element $\tilde{h}\in H(\tilde{T})$. Now $N^{v}\tilde{h}$ is unique, so in particular on the overlap $\tilde{T}\times_{T} \tilde{T}$ we have $\mr{pr}_{1}^{*}(N^{v}\tilde{h})=\mr{pr}_{2}^{*}(N^{v}\tilde{h})$. Hence $N^{v}\tilde{h}$ descends to a unique $h\in H(T)$. One readily checks that this construction is natural in $T$.
\end{proof}

Now let $N$ be $p^r$ for a prime number $p$ and a positive integer $r$. Let $A$ be a log abelian variety with constant degeneration over $S$, and $A[p^{\infty}]$ the log $p$-divisible group associated to $A$. We are going to show that both $A$ and $A[p^{\infty}]$ satisfy the conditions (a)-(c) from Lemma \ref{lem5.6}.

\begin{prop}\label{prop5.2}
The conditions (a)-(c) from Lemma \ref{lem5.6} hold for $A$.
\end{prop}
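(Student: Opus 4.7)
The plan is to verify each of (a), (b), (c) separately, exploiting the two short exact sequences
\[
0 \to Y \to G_{\mathrm{log}}^{(Y)} \to A \to 0 \quad \text{and} \quad 0 \to G \to G_{\mathrm{log}}^{(Y)} \to \mathcal{H}om(X, \Gml/\Gm)^{(Y)} \to 0,
\]
together with two elementary observations: first, $G$ is classically smooth as an extension of an abelian scheme by a torus; second, the étale sheaves $\Gml/\Gm = \overline{M^{\mathrm{gp}}}$, $X$, and $Y$ are invariant under strict nilpotent thickenings, since $\mathcal{O}^{\times}$ lifts and $X, Y$ are étale-locally constant.

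For (a), the $p^r$-divisibility of $A$ follows directly from Corollary \ref{cor3.1}: the proof there produces a commutative diagram showing that multiplication by $n = p^r$ on $G_{\mathrm{log}}^{(Y)}$ is surjective with kernel $G[n]$. Combined with the injectivity of $n$ on the lattice $Y$, the snake lemma applied to the first exact sequence yields surjectivity of $n : A \to A$.

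For (b), I will identify $\hat{A}$ with $\hat{G}$; the latter is representable by a classical formal Lie group because $G$ is a smooth group scheme. For any affine $U \in (\mathrm{fsAff}/S)$ with reduction $U^{\mathrm{red}}$, the restriction map $\mathcal{H}om(X, \Gml/\Gm)^{(Y)}(U) \to \mathcal{H}om(X, \Gml/\Gm)^{(Y)}(U^{\mathrm{red}})$ is an isomorphism by the invariance noted above, and similarly $Y(U) = Y(U^{\mathrm{red}})$. Diagram-chasing through the second sequence gives $\hat{G_{\mathrm{log}}^{(Y)}} = \hat{G}$; chasing through the first (after passing to an étale cover to trivialize the relevant $H^1$-obstructions for $Y$) yields $\hat{A} = \hat{G_{\mathrm{log}}^{(Y)}} = \hat{G}$.

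For (c), the key step is formal log smoothness of $G_{\mathrm{log}}^{(Y)}$. Given a strict square-zero immersion $T_0 \hookrightarrow T$ in $(\mathrm{fs}/S)$ and $g_0 \in G_{\mathrm{log}}^{(Y)}(T_0)$, the image $\bar{g}_0$ in $\mathcal{H}om(X, \Gml/\Gm)^{(Y)}(T_0)$ extends uniquely to a section over $T$ by the invariance of $\overline{M^{\mathrm{gp}}}$; the preimage in $G_{\mathrm{log}}$ is a $G$-torsor over $T$, which admits an étale-local section by classical smoothness of $G$. The $(Y)$-condition is intrinsic to the log structure and hence is automatically preserved by the lift. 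Now given $a_0 \in A(T_0)$, we lift it étale locally to some $\tilde{a}_0 \in G_{\mathrm{log}}^{(Y)}(T_0)$, then to $\tilde{a} \in G_{\mathrm{log}}^{(Y)}(T)$ by formal log smoothness of $G_{\mathrm{log}}^{(Y)}$, and project to $A(T)$ to obtain the desired lift of $a_0$. The main obstacle will be the careful bookkeeping in (b): the identification $\hat{A} \cong \hat{G}$ requires working with the kfl quotient $G_{\mathrm{log}}^{(Y)}/Y$, which is a priori larger than the naive presheaf quotient, so one must verify that kfl descent data for sections lifting trivially to $U^{\mathrm{red}}$ refines to étale descent data and that the implicit torsor obstructions for $Y$ vanish compatibly with the reduction $U \to U^{\mathrm{red}}$.
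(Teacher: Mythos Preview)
Your approach is essentially correct and close to the paper's, but organized differently and with one slightly misleading move.

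For (a) and (c), the paper simply cites external results: $n_A$ is an isogeny by \cite[Cor.~2.1]{zha4}, and $A$ is log smooth by \cite[Thm.~4.1]{k-k-n4}. Your arguments for both are valid self-contained substitutes. In particular, your direct proof of formal log smoothness via $G_{\mathrm{log}}^{(Y)}$ is a genuine alternative to invoking the log smoothness theorem, and it works for exactly the reasons you give.

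For (b), the paper uses the single exact sequence
\[
0 \to G \to A \to \mathcal{Q}/Y \to 0
\]
(with $\mathcal{Q} = \mathcal{H}om_S(X,\Gml/\Gm)^{(Y)}$) from \cite[Thm.~2.1]{zha3}, and shows that $\mathcal{Q}/Y(U)\to \mathcal{Q}/Y(U^{\mathrm{red}})$ is \emph{injective}, which immediately gives $\hat{A}=\hat{G}$. Your two-step factorization through $G_{\mathrm{log}}^{(Y)}$ is equivalent in content, but your phrase ``passing to an \'etale cover to trivialize the relevant $H^1$-obstructions'' is not the right mechanism: you need $\hat{A}(U)=\hat{G}(U)$ for \emph{every} $U$, not after refinement. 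The correct point---which you do flag in your final sentence---is that $H^1_{\mathrm{kfl}}(U,Y)\cong H^1_{\mathrm{kfl}}(U^{\mathrm{red}},Y)$, obtained via $H^1_{\mathrm{kfl}}=H^1_{\mathrm{fl}}=H^1_{\mathrm{\acute{e}t}}$ for the smooth group $Y$ together with invariance of the small \'etale site under nilpotent thickenings. Once you use that isomorphism (rather than passing to a cover), your diagram chase goes through globally and yields $\hat{A}=\hat{G}$ exactly as the paper does.
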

\begin{proof}
The multiplication by $n$ map $n_A:A\rightarrow A$ is an isogeny by \cite[Cor. 2.1]{zha4} for any positive integer $n$. In particular, $A$ is $p$-divisible.

By \cite[Thm. 4.1]{k-k-n4}, $A$ is log smooth over $S$. In particular it is formally log smooth over $S$.

We are left with checking the condition (b). Assume that $A\cong G_{\mr{log}}^{(Y)}/Y$ for a pointwise polarizable log 1-motive $[Y\to G_{\mr{log}}]$ over $S$. By \cite[Thm. 2.1]{zha3}, $A$ fits into the following short exact sequence
\begin{equation}\label{eq3.1}
0\rightarrow G\rightarrow A\rightarrow \mc{H}om_S(X,\Gml/\Gm)^{(Y)}/Y\rightarrow 0
\end{equation}
of sheaves of abelian groups over $(\mr{fs}/S)_\mr{{kfl}}$. We abbreviate $\mc{H}om_S(X,\Gml/\Gm)^{(Y)}$ as $\mc{Q}$. Let $U\in(\mr{fsAff}/S)$, we have the following two commutative diagrams
\begin{equation}\label{eq3.2}
\xymatrix{
0\ar[r] &G(U)\ar[r]\ar[d] &A(U)\ar[r]\ar[d] &(\mc{Q}/Y)(U)\ar[d]   \\
0\ar[r] &G(U^{\mr{red}})\ar[r] &A(U^{\mr{red}})\ar[r] &(\mc{Q}/Y)(U^{\mr{red}})
}
\end{equation}
and 
\begin{equation}\label{eq3.3}
\xymatrix{
0\ar[r] &Y(U)\ar[r]\ar[d] &\mc{Q}(U)\ar[r]\ar[d] &(\mc{Q}/Y)(U)\ar[r]\ar[d] &H^1_{\mr{kfl}}(U,Y)\ar[d]  \\
0\ar[r] &Y(U^{\mr{red}})\ar[r] &\mc{Q}(U^{\mr{red}})\ar[r] &(\mc{Q}/Y)(U^{\mr{red}})\ar[r] &H^1_{\mr{kfl}}(U^{\mr{red}},Y)
}
\end{equation}
with exact rows. By \cite[Lem. 2.4]{zha3}\footnote{The proof of \cite[Lem. 2.4]{zha3}
makes use of fpqc descent of schemes which probably does not always hold and deserves a precise reference (see \cite[\href{https://stacks.math.columbia.edu/tag/0APK}{Tag 0APK}]{stacks-project} for the situation there). Alternatively, see \cite[Thm. 1.3]{zha8}.}, we have 
\[H^1_{\mr{kfl}}(U,Y)\cong H^1_{\mr{fl}}(U,Y) \text{ and } H^1_{\mr{kfl}}(U^{\mr{red}},Y)\cong H^1_{\mr{fl}}(U^{\mr{red}},Y).\]
Since $Y$ is a smooth group scheme over $S$, we further get $H^1_{\mr{fl}}(U,Y)\cong H^1_{\mr{\acute{e}t}}(U,Y)$ and $H^1_{\mr{fl}}(U^{\mr{red}},Y)\cong H^1_{\mr{\acute{e}t}}(U^{\mr{red}},Y)$. Since the small \'etale site of $U$ and the small \'etale site of $U^{\mr{red}}$ are equivalent, we get $H^1_{\mr{\acute{e}t}}(U,Y)\cong H^1_{\mr{\acute{e}t}}(U^{\mr{red}},Y)$ and $Y(U)\cong Y(U^{\mr{red}})$. Hence the left vertical map and the right vertical map of (\ref{eq3.3}) are both isomorphisms. Since the formation of $\Gml/\Gm$ is compatible with strict base change of log schemes, we get that the canonical map $\mc{Q}(U)\rightarrow\mc{Q}(U^{\mr{red}})$ is an isomorphism. Applying the five lemma to (\ref{eq3.3}), we get that the canonical map $\mc{Q}/Y(U)\rightarrow \mc{Q}/Y(U^{\mr{red}})$ is injective. By diagram (\ref{eq3.2}), we have that the canonical map $\hat{G}(U)\rightarrow\hat{A}(U)$ is an isomorphism. Since $\hat{G}$ is representable by a formal Lie group, so is $\hat{A}$.
\end{proof}

\begin{prop}\label{prop5.3}
Let $S_0$ be an fs log scheme where $p$ is locally nilpotent. Let $\mathbf{M}_0=[Y_0\to G_{0,\mr{log}}]$ be a log 1-motive over $S_0$. Let $S_0\to S$ be strict closed immersion of fs log schemes defined by a nilpotent sheaf of ideals, and let $H$ be a log $p$-divisible group over $S$ such that $H_0:=H\times_SS_0= \mathbf{M}_0[p^\infty]$. Then
 $\hat{H}_0$ (resp. $\hat{H}$) is locally a formal Lie group over $S_0$ (resp. over $S$).
\end{prop}
\begin{proof}
The log $p$-divisible group $H_0$ fits into a short exact sequence
\[0\to G_0[p^\infty]\to H_0\to Y_0\otimes_Z\Q_p/\Z_p\to 0.\]
Since $Y_0\otimes_Z\Q_p/\Z_p$ is classical \'etale, the subgroup functor $\hat{H}_0$ of $H_0$ coincides with $\hat{G}_0=\widehat{G_0[p^\infty]}$ which is locally a formal Lie group.

Since the small \'etale site of $S_0$ is equivalent to the small \'etale site of $S$, $Y_0$ extends to a locally constant sheaf $Y$ over $S$. Let $H_n:=H[p^n]$ and $H_{0,n}:=H_0[p^n]$. Consider the commutative diagram
\[\xymatrix{
H_{0,n}\times_{S_0}H_{0,n}\ar[r]^-{\bar{m}_n}\ar@{^(->}[d] &H_{0,n}\ar[r]^-{i_n\circ\bar{p}_n}\ar[d] &Y/p^nY\ar[d] \\
H_{n}\times_{S}H_{n}\ar[r]_-{m_n}\ar@{-->}[rru]^{P_n} &H_n\ar[r]\ar@{..>}[ru]_{p_n} &S
}\]
of solids arrows, where $\bar{m}_n$ (resp. $m_n$) is the group law of $H_{0,n}$ (resp. $H_n$), $\bar{p}_n$ is the composition $H_{0,n}=T_{p^n}(\mathbf{M}_0)\to Y_0/p^nY_0$, and $i_n$ is the closed immersion $Y_0/p^nY_0\hookrightarrow Y/p^nY$. Since $Y/p^nY$ is log \'etale (in fact even classical \'etale) over $S$ and $H_{0,n}\to H_n$ is a strict closed immersion defined by a nilpotent ideal, $i_n\circ\bar{p}_n$ and $i_n\circ\bar{p}_n\circ\bar{m}_n$ lift to a unique map $p_n$ and $P_n$ respectively such that the diagram remains commutative. Let $\mu_n$ be the group law of $Y/p^nY$. Since $(i_n\circ\bar{\mu}_n)\circ \bar{p}_n\times \bar{p}_n=(i_n\circ\bar{p}_n)\circ \bar{m}_n$, the outer square of the commutative diagram
\[\xymatrix{
H_{0,n}\times_{S_0}H_{0,n}\ar[r]^-{\bar{p}_n\times \bar{p}_n}\ar@{^(->}[d] &Y_0/p^nY_0\times_{S_0}Y_0/p^nY_0\ar[r]^-{i_n\circ\bar{\mu}_n}\ar[d] &Y/p^nY\ar[d] \\
H_{n}\times_{S}H_{n}\ar[r]_-{p_n\times p_n}\ar@{-->}[rru]^{P_n} &Y/p^nY\times_SY/p^nY\ar[r]\ar[ru]_{\mu_n} &S
}\]
of solid arrows coincides with the outer square of the last diagram. Therefore the map $P_n$ from last diagram is also the unique lifting of $(i_n\circ\bar{\mu}_n)\circ \bar{p}_n\times \bar{p}_n$ such that the diagram remains commutative. It follows that $\mu_n\circ(p_n\times p_n)=P_n=p_n\circ m_n$. Therefore $p_n$ is a homomorphism of group log schemes. By the uniqueness of $p_n$, it is also clear that $(p_n)_{n\geq1}$ gives rise to a homomorphism 
\[p:H\to Y\otimes_\Z\Q_p/\Z_p.\]

Let $H'_n:=\mr{ker}(H_n\xrightarrow{p_n}Y/p^nY)$. Clearly $H'_n$ is representable. Since $S_0\to S$ is a strict nil immersion and 
\[(H'_n)\times_SS_0=G_0[p^n]\in(\mr{fin}/S_0)_{\mr{c}},\]
$H'_n$ is strict over $S$. Since $G_0[p^n]$ is classically flat, in particular log flat, $(p_n)_0$ is log flat over $S_0$ by descent. Since $H_n$ is log flat over $S$, $p_n$ is log flat by Corollary \ref{corB.1}. Hence $H_n'=\mr{ker}(p_n)$ is log flat. Since it is also strict over $S$, it is even classically flat. Moreover, $H_n'\to S$ agrees with \[H_n'=H_n\times_{Y/p^nY}S\to Z\to H_n\to S,\]
where $Z$ denotes the fiber product of $H_n\rightarrow Y/p^nY\xleftarrow{e} S$ in the category of schemes. The first map is finite by \cite[Chap. III, Cor. 2.1.6]{ogu1}. Since the unit section $e$ of $Y/p^nY$ is a closed immersion, the second map is also finite. The last map is finite by Proposition \ref{prop1.1}. It follows that $H_n'\in(\mr{fin}/S)_{\mr{c}}$.

Thus we get a classical $p$-divisible group $H':=\varinjlim_nH'_n$ such that $H'\times_SS_0=G_0[p^\infty]$. Using the exact sequence
\[0\to H'\to H\xrightarrow{p} Y\otimes_\Z\Q_p/\Z_p,\]
we get that $\hat{H}=\hat{H'}$ is locally a formal Lie group.
\end{proof}

\begin{prop}\label{prop5.4}
Let $S_0$, $S$, $H_0$, and $H$ be as in Proposition \ref{prop5.3}. Then $H_0$ and $H$ satisfies the conditions (a)-(c) from Lemma \ref{lem5.6}.
\end{prop}
\begin{proof}
Condition (a) is trivial, condition (b) follows from Proposition \ref{prop5.3}, and condition (c) follows from Theorem \ref{thm2.4}. 
\end{proof}

We denote by $\mr{LAVwCD}_S$ the category of log abelian varieties with constant degeneration over $S$, and by $\mr{Def}(S,S_0)$ the category of triples 
$$(A_0,H,\varepsilon)$$
consisting of a log abelian variety with constant degeneration $A_0$ over $S_0$, an object $H$ of $(\text{$p$-div}/S)^{\mr{log}}_{\mr{d}}$, and an isomorphism 
$\varepsilon:H_0\xrightarrow{\cong}A_0[p^{\infty}]$ of log $p$-divisible groups over $S_0$.

\begin{thm}[Serre-Tate for LAVwCD]\label{thm5.1}
Let $R,R_0,S,S_0,I, N$, and $v$ be as in the beginning of this subsection. We further assume that $N$ is a power of a prime number $p$. Then the canonical functor
$$\mr{LAVwCD}_S\rightarrow\mr{Def}(S,S_0), A\mapsto (A_0,A[p^{\infty}],\mr{id}_{A_0[p^{\infty}]})$$
where $A_0$ denotes the base change of $A$ to $S_0$, is an equivalence of categories.
\end{thm}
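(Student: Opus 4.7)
The plan is to follow Drinfeld's proof of the classical Serre--Tate theorem as exposed in Katz \cite[\S 1]{katz1}, with Lemma \ref{lem5.5} as the technical pivot. Hypotheses (a)--(c) of that lemma have been verified in Propositions \ref{prop5.2} and \ref{prop5.3} for, respectively, log abelian varieties with constant degeneration over $S$ and log $p$-divisible groups over $S$ deforming $A[p^{\infty}]$, so the remaining task is to assemble these ingredients. For full faithfulness, given $A, B \in \mr{LAVwCD}_S$ and a morphism $(f_0, g) : (A_0, A[p^{\infty}], \mr{id}) \to (B_0, B[p^{\infty}], \mr{id})$ in $\mr{Def}(S, S_0)$, uniqueness of a lift $f : A \to B$ of $f_0$ is immediate from Lemma \ref{lem5.5}(2). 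For existence, Lemma \ref{lem5.5}(4) reduces the problem to checking that the canonical homomorphism $``N^v f_0'' : A \to B$ produced by Lemma \ref{lem5.5}(3) annihilates $A[N^v]$. Applying the uniqueness part of Lemma \ref{lem5.5}(3) instead with $G = A[p^{\infty}]$ and $H = B$, we identify $``N^v f_0''\vert_{A[p^{\infty}]}$ with the composite $N^v g : A[p^{\infty}] \to B[p^{\infty}] \hookrightarrow B$, since both lift $N^v f_0 \vert_{A_0[p^{\infty}]}$. As $N^v g$ manifestly kills $A[N^v] \subset A[p^{\infty}]$, so does $``N^v f_0''$, and $f_0$ lifts.

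For essential surjectivity, given $(A_0, H, \varepsilon)$ we first exhibit some lift $\tilde{A} \in \mr{LAVwCD}_S$ of $A_0$ by lifting the associated pointwise polarizable log 1-motive $M_0 = [Y_0 \xrightarrow{u_0} G_{0,\mr{log}}]$ piece by piece: the \'etale lattice $Y_0$ and the torus part $T_0$ of $G_0$ lift uniquely; the abelian quotient $B_0$ of $G_0$ lifts by the classical Serre--Tate theorem, using the classical $p$-divisible subquotient of $H$ that deforms $B_0[p^{\infty}]$; and the extension class of $G_0$ together with the monodromy $u_0$ lift \'etale locally by formal log smoothness. Granted $\tilde{A}$, we then run Drinfeld's isogeny construction: the isomorphism $\varepsilon^{-1} : A_0[p^{\infty}] \xrightarrow{\sim} H_0$, regarded as a map $\tilde{A}[p^{\infty}]_0 \to H_0$ via the canonical identification $\tilde{A}[p^{\infty}]_0 = A_0[p^{\infty}]$, admits a unique lift $\alpha := ``N^v \varepsilon^{-1}'' : \tilde{A}[p^{\infty}] \to H$ by Lemma \ref{lem5.5}(3). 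Its kernel $K$ is a finite kfl subgroup of $\tilde{A}$ whose reduction equals $\ker(N^v : A_0 \to A_0) = A_0[N^v]$; by Proposition \ref{prop5.1}(3) the quotient $A := \tilde{A}/K$ again lies in $\mr{LAVwCD}_S$, and one checks directly that multiplication by $N^v$ identifies its reduction with $A_0$ while the induced isomorphism $A[p^{\infty}] \xrightarrow{\sim} H$ matches $\varepsilon$, so the triple attached to $A$ is canonically isomorphic to $(A_0, H, \varepsilon)$.

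The main obstacle will be the essential surjectivity, and within it the initial construction of $\tilde{A}$: in the classical setting one simply invokes the formal smoothness of the moduli of abelian varieties, whereas here each piece of the log 1-motive must be lifted separately, the most delicate being the monodromy $u_0$, whose lift is controlled by the formal log smoothness results of Sections 3 and 4. A second subtle point is verifying that Drinfeld's quotient $\tilde{A}/K$ really stays inside $\mr{LAVwCD}_S$ and recovers $H$ compatibly with $\varepsilon$, which relies on Proposition \ref{prop5.1}(3) together with careful bookkeeping of the identifications through the construction.
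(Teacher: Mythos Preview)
Your full faithfulness argument matches the paper's almost exactly; no issues there.

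For essential surjectivity the overall Drinfeld strategy is the same, but there are two genuine gaps and one methodological difference worth noting.

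\medskip

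\textbf{Gap 1: why is $K$ a finite kfl group log scheme?} You assert that $K=\ker(\alpha)$ is ``a finite kfl subgroup of $\tilde{A}$'' and then invoke Proposition~\ref{prop5.1}(3). But Proposition~\ref{prop5.1}(3) requires $K\in(\mr{fin}/S)_{\mr{r}}$, and kernels of maps between objects of $(\mr{fin}/S)_{\mr{r}}$ are \emph{not} automatically in $(\mr{fin}/S)_{\mr{r}}$: the problem is log-flatness over the base. Knowing that $K\times_S S_0\cong A_0[N^v]$ is classical finite flat does not by itself force $K$ to be log-flat over $S$. The paper spends real effort here: it exhibits $K$ as sitting in a short exact sequence with $\tilde{A}[N^{2v}]$ and the kernel of the companion map, then uses Olsson's stacks $\mathcal{L}og_{(-)}$ together with the fibrewise flatness criterion for algebraic stacks (Lemma~\ref{applem}) to deduce that the quotient map is log-flat, whence $K$ is log-flat and Kummer, hence in $(\mr{fin}/S)_{\mr{r}}$ by Proposition~\ref{prop1.1}. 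This step is not optional.

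\medskip

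\textbf{Gap 2: descent.} Your construction of the initial lift $\tilde{A}$ is, by your own description, only \'etale local (``lift \'etale locally by formal log smoothness''), yet you then treat $\tilde{A}$ as living over $S$ itself. The paper acknowledges this: it lifts $A_0$ only over an \'etale cover $\tilde{S}\to S$ (via \cite[Thm.~3.4]{k-k-n6}), runs Drinfeld over $\tilde{S}$, and then descends the resulting $\tilde{A}$ using the already-proved full faithfulness to manufacture a descent datum from the global $H$, plus Remark~\ref{rmk5.1} for effectiveness. You need the same descent step.

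\medskip

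\textbf{Difference in the initial lift.} The paper obtains the \'etale-local lift of $A_0$ in one stroke by citing log formal smoothness of the moduli of polarizable log abelian varieties \cite[Thm.~3.4]{k-k-n6}. Your piece-by-piece lifting of the log 1-motive (lattice, torus, abelian quotient via classical Serre--Tate, extension class, then $u_0$) is a reasonable alternative in spirit and avoids that external reference, but as written it is only a sketch: you would still need to check that the pieces glue (or else accept that you are working \'etale locally, which brings you back to Gap~2) and that the resulting log 1-motive is pointwise polarizable.
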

\begin{proof}
Step 1: Full faithfulness. Let $A, A'$ be log abelian varieties over $S$, $f_{p^{\infty}}:A[p^{\infty}]\rightarrow A'[p^{\infty}]$ a homomorphism of log $p$-divisible groups over $S$, and $f_0:A_0\rightarrow A'_0$ a homomorphism of log abelian varieties over $S_0$ such that $f_0[p^{\infty}]$ coincides with $f_{p^{\infty}}\times_SS_0$. To finish the proof of full faithfulness, it suffices to show that there exists a unique homomorphism $f:A\rightarrow A'$ which induces both $f_{p^{\infty}}$ and $f_0$. 
The uniqueness follows easily from Proposition \ref{prop5.2} and Lemma \ref{lem5.6} (2).

To construct the homomorphism $f$, consider the canonical lifting $``N^vf":A\rightarrow A'$ of $N^vf_0$ guaranteed by Lemma \ref{lem5.6} (3). Since $``N^vf"$ lifts $N^vf_0$, the induced map $``N^vf"[p^{\infty}]$ on log $p$-divisible groups lifts $N^v(f_0[p^{\infty}])$. By part (2) of Lemma \ref{lem5.6}, we have  $``N^vf"[p^{\infty}]=N^vf_{p^{\infty}}$. Hence $``N^vf "$ kills $A[N^v]$. By part (4) of Lemma \ref{lem5.6}, there exists a homomorphism $f:A\rightarrow A'$ lifting $f_0$ such that $``N^vf"=N^vf$. Since $f$ lifting $f_0$ implies $f[p^{\infty}]$ lifting $f_0[p^{\infty}]$, we get $f[p^{\infty}]=f_{p^{\infty}}$ again by part (2) of Lemma \ref{lem5.6}. This finishes the proof of the full faithfulness.

Step 2: Essential surjectivity. Let $(A_0,H,\alpha_{0})$ be an object of $\mr{Def}(S,S_0)$, we look for a log abelian variety $A$ which gives rise to $(A_0,H,\alpha_{0})$ up to isomorphism. By \cite[Thm. 1.6]{k-k-n7} and \cite[Cor. 3.11]{kat1}, $A_0$ lifts to a log abelian variety with constant degeneration $A'$ over $S$. The isomorphism 
\[\alpha_0:A'_0:=A'\times_SS_0\xrightarrow{\cong} A_0\]
of log abelian varieties over $S_0$ induces an isomorphism 
\[\alpha_0[p^{\infty}]:A'_0[p^{\infty}]\rightarrow A_0[p^{\infty}]\]
of log $p$-divisible groups over $S_0$. By part (3) of Lemma \ref{lem5.4}, $N^v\alpha_0[p^{\infty}]$ has a unique lifting to a homomorphism 
\[``N^v\alpha[p^{\infty}]":A'[p^{\infty}]\rightarrow H\]
of log $p$-divisible groups over $S$. The same procedure applied to $\beta_0:=\alpha_0^{-1}$, we get a homomorphism 
$$``N^v\beta[p^{\infty}]":H\rightarrow A'[p^{\infty}]$$
of log $p$-divisible groups over $S$. Since the composition $(N^v\beta_0[p^{\infty}])\circ(N^v\alpha_0[p^{\infty}])$ (resp. $(N^v\alpha_0[p^{\infty}])\circ (N^v\beta_0[p^{\infty}])$)
is the multiplication by $N^{2v}$ map on $A'_0[p^{\infty}]$ (resp. $H_0$), the composition 
$``N^v\beta[p^{\infty}]"\circ ``N^v\alpha[p^{\infty}]"$ (resp. $``N^v\alpha[p^{\infty}]"\circ ``N^v\beta[p^{\infty}]"$) is the multiplication by $N^{2v}$ map on $A'[p^{\infty}]$ (resp. $H$). Let $K:=\mr{ker}(``N^v\alpha[p^{\infty}]")$ and $Q:=\mr{ker}(``N^v\beta[p^{\infty}]")$. Thus we get a short exact sequence
\[0\to K\to A[N^{2v}]\xrightarrow{u} Q\to 0\]
using the kernel-cokernel exact sequences for the compositions 
\[``N^v\beta[p^{\infty}]"\circ ``N^v\alpha[p^{\infty}]"=N^{2v}\]
and 
\[``N^v\alpha[p^{\infty}]"\circ ``N^v\beta[p^{\infty}]"=N^{2v}.\]

\begin{claim}
$K=\mr{ker}(``N^v\alpha[p^{\infty}]")$ lies in $(\mr{fin}/S)_{\mr{r}}$.
\end{claim}
Since $K$ is also the kernel of $A'[N^{2v}]\to H[N^{2v}]$, it is representable by an fs log scheme. Clearly both $K_0:=K\times_SS_0$ and $Q_0:=Q\times_SS_0$ lie in $(\mr{fin}/S_0)_{\mr{r}}$. Then $u_0:A'_0[N^{2v}]\to Q_0$ as a $K_0$-torsor is log flat by Kummer log flat descent (see \cite[Thm. 0.1]{i-n-t1}). By the log flatness of $A'[N^{2v}]\to S$ and  Corollary \ref{corB.1}, $u$ is log flat. From this it then follows that $K$ is log flat over $S$. Moreover, the Kummer-property extends to infinitesimal lifts by Lemma \ref{lem5.7} below. Therefore, $K$ is also of Kummer type over $S$. Being the kernel of $A'[N^{2v}]\to H[N^{2v}]$, the group $K$ is also finite over $S$, by the same argument as in the proof of the finitenss of the group $H_n'$ in the proof of Proposition \ref{prop5.3}. But then, $K$ being finite Kummer log flat over $S$, one has $K\in (\mr{fin}/S)_{\mr{r}}$ (see Proposition \ref{prop1.1}). This finishes the proof of the claim.

Let $[Y\to G_{\mr{log}}]$ be the log 1-motive corresponding to $A'$. Let 
\[K':=\mr{ker}(K\to A'[N^{2v}]\to Y/N^{2v}Y).\]
Since $K_0=\mr{ker}(N^v\alpha_0[p^\infty])=A'_0[N^v]$, we have 
\[K'_0=\mr{ker}(A'_0[N^v]\to Y/N^{2v}Y)=G_0[N^v].\]
Replace $u$ by $K\to Y/N^{2v}Y$ and $K$ by $K'$, the same proof as that of the claim implies that $K'\in (\mr{fin}/S)_{\mr{r}}$. Now applying Proposition \ref{prop5.1}, we get that $A'\to A'/K$ is an isogeny of log abelian varieties with constant degeneration with kernel $K$. Let $A:=A'/K$. Apparently 
\[A\times_SS_0=A'_0/K_0=A'_0/A'_0[N^v]= A'_0\cong A_0,\]
and $``N^v\alpha[p^{\infty}]"$ induces an isomorphism $A[p^\infty]\xrightarrow{\cong}H$. This finishes the proof of the essential surjectivity.
\end{proof}

In the proof above we have used the following lemma.
\begin{lem}\label{lem5.7}
Assume that $S_{0}\subset S$ is a strict nilpotent thickening of fs log schemes and let $f:X\to Y$ be a morphism of fs log schemes over $S$, such that $f_{0}=f\times_{S} S_{0}$ is Kummer. Then $f$ is also Kummer.
\end{lem}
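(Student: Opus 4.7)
The plan is to observe that the Kummer condition on a morphism of fs log schemes depends only on the induced map of characteristic sheaves $\overline{M}_Y\to f_*\overline{M}_X$, and that this datum is preserved by a strict nilpotent thickening. In the paper ``nilpotent thickening of fs log schemes'' is consistently read in the strict sense (cf.\ the analogous hypothesis in Proposition~\ref{prop5.3}), so we may assume that $S_0\hookrightarrow S$ is a strict closed immersion with nilpotent ideal of definition.

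Recall that $f\colon X\to Y$ is of Kummer type iff, at every geometric point $\bar{x}$ of $X$ with image $\bar{y}=f(\bar{x})$, the induced homomorphism of fs monoids $\overline{M}_{Y,\bar{y}}\to \overline{M}_{X,\bar{x}}$ is injective and satisfies the Kummer condition: for every $a\in \overline{M}_{X,\bar{x}}$ there is an $n\in\N_{>0}$ with $na$ in the image. Since the base change of a strict morphism is strict, the immersions $X_0=X\times_S S_0\hookrightarrow X$ and $Y_0=Y\times_S S_0\hookrightarrow Y$ are strict closed immersions with nilpotent ideal; in particular their underlying maps of schemes are homeomorphisms. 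Strictness then yields canonical identifications $\overline{M}_{X_0}=\overline{M}_X$ and $\overline{M}_{Y_0}=\overline{M}_Y$ under these homeomorphisms, compatible with the homomorphisms of characteristic sheaves induced by $f$ and $f_0$ respectively.

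Consequently, at any geometric point $\bar{x}$ of $X$ (viewed simultaneously as a geometric point of $X_0$ via the homeomorphism), the stalk map $\overline{M}_{Y,\bar{y}}\to \overline{M}_{X,\bar{x}}$ induced by $f$ coincides with the stalk map induced by $f_0$. Since the latter is injective with Kummer image by hypothesis on $f_0$, so is the former, so $f$ is Kummer. The only substantive point to check is the strictness of the thickening: without it the characteristic sheaves of $X$ and $X_0$ could genuinely differ and the statement could fail, but given strictness the argument reduces to a routine unwinding of the stalk-local definition of the Kummer property.
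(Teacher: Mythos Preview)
Your argument is correct and follows essentially the same route as the paper: both reduce the Kummer condition to the stalk maps of characteristic monoids $\overline{M}_{Y,\overline{f(x)}}\to\overline{M}_{X,\overline{x}}$ and then identify these with the corresponding stalk maps for $f_0$ via the strict nilpotent thickenings $X_0\hookrightarrow X$, $Y_0\hookrightarrow Y$. If anything, your write-up is more careful than the paper's, which asserts $g^{-1}\overline{M}_{Z'}=\overline{M}_Z$ ``for any map $g$'' where of course strictness is what is being used.
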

\begin{proof}
For any log scheme $Z$, we denote $M_Z/\mc{O}_Z^\times$ by $\overline{M}_Z$.

Let $x$ be any point of $X$, and let $y=f(x)$. It suffices to show that the canonical map $\overline{M}_{Y,\bar{y}}\to \overline{M}_{X,\bar{x}}$ of fs monoids is Kummer. Let $X_0:=X\times_SS_0$ and $Y_0:=Y\times_SS_0$. Consider the following commutative diagram
\[\xymatrix{
X_0\ar[r]\ar[d]_{f_0} &X\ar[d]^f \\
Y_0\ar[r] &Y
}\]
with horizontal maps strict. This square induces another commutative square
\[\xymatrix{
\overline{M}_{X_0,\bar{x}} &\overline{M}_{X,\bar{x}}\ar[l]_{\cong}  \\
\overline{M}_{Y_0,\bar{y}}\ar[u] &\overline{M}_{Y,\bar{y}}\ar[l]_{\cong}\ar[u]
}.\]
Since $f_0$ is Kummer, the left vertical map in the above square is Kummer. If follows that the right vertical map is also Kummer.
\end{proof}

\appendix

\section{A lemma on finite flat group schemes}
\begin{lem}\label{lemA.1}
Let $S$ be a scheme, $F$ a finite flat group scheme over $S$ which is of multiplicative type, and $F'$ a finite flat group scheme over $S$. Then the fppf sheaf $H:=\mc{H}om_S(F,F')$ is representable by an \'etale quasi-finite separated group scheme over $S$.
\end{lem}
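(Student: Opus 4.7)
The plan is to reduce, via \'etale descent, to the case $F = \mu_n$, and then apply Cartier duality to transform the problem into one about $\mc{H}om_S(-,\underline{\Z/n\Z})$.

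Since representability of fppf sheaves and the properties of being \'etale, quasi-finite, and separated over $S$ are all \'etale local on $S$, we may work \'etale locally on $S$. A finite flat group scheme of multiplicative type is diagonalizable \'etale locally on $S$, so we may assume $F \cong D(M)$ for some finite abelian group $M$. Writing $M \cong \bigoplus_{i=1}^{k}\Z/n_i\Z$ as a product of cyclic groups gives $F \cong \prod_{i=1}^{k}\mu_{n_i}$, and hence
\[
\mc{H}om_S(F,F') \cong \prod_{i=1}^{k}\mc{H}om_S(\mu_{n_i},F').
\]
Since the properties in the conclusion are preserved by finite products, this reduces the problem to $F = \mu_n$ for a positive integer $n$.

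Since $\mu_n$ is killed by $n$, every homomorphism $\mu_n \to F'$ factors through $F'[n]$, so $\mc{H}om_S(\mu_n,F') = \mc{H}om_S(\mu_n,F'[n])$. Both $\mu_n$ and $F'[n]$ are finite flat commutative group schemes over $S$ killed by $n$, so Cartier duality gives
\[
\mc{H}om_S(\mu_n,F'[n]) \cong \mc{H}om_S\bigl((F'[n])^D,\mu_n^D\bigr) = \mc{H}om_S(K,\underline{\Z/n\Z}),
\]
where $K := (F'[n])^D$ is a finite flat commutative group scheme over $S$. It remains to show that $\mc{H}om_S(K,\underline{\Z/n\Z})$ is representable by an \'etale quasi-finite separated $S$-scheme.

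For a finite locally free $S$-scheme $K$, there is a maximal \'etale quotient $\pi_0(K/S)$, representable by a finite \'etale $S$-scheme; over a strictly henselian local base this is immediate from the decomposition of finite algebras over henselian rings into products of local rings, and the general case follows by \'etale descent. Because $\underline{\Z/n\Z}$ is \'etale over $S$, every $S$-morphism $K \to \underline{\Z/n\Z}$ factors uniquely through $\pi_0(K/S)$, compatibly with the group law. Therefore
\[
\mc{H}om_S(K,\underline{\Z/n\Z}) \cong \mc{H}om_S\bigl(\pi_0(K/S),\underline{\Z/n\Z}\bigr),
\]
a $\mc{H}om$ sheaf between two finite \'etale group schemes over $S$. \'Etale locally on $S$, both sides become constant and this becomes the constant group scheme associated to the ordinary abelian group of homomorphisms between the fibers, so it is representable by a finite \'etale group scheme, which is a fortiori \'etale, quasi-finite, and separated.

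The main delicate point is the existence of the maximal \'etale quotient $\pi_0(K/S)$ as a finite \'etale $S$-scheme for a general finite flat $K$. One can alternatively sidestep it by describing $\mr{Mor}_S(K,\underline{\Z/n\Z})$ directly as the scheme of complete systems of $n$ orthogonal idempotents in the locally free $\mc{O}_S$-algebra $\mc{O}_K$ and then cutting out the group homomorphism condition, which forms an open-closed subfunctor of this finite \'etale $S$-scheme.
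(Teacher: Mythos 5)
Your reduction to $F=\mu_n$ is fine, but the main route through $\pi_0(K/S)$ has a genuine gap, and the symptom is that you end up proving too much: you conclude that $\mc{H}om_S(K,\underline{\Z/n\Z})$ is representable by a \emph{finite} \'etale group scheme, whereas the lemma only claims quasi-finiteness, and quasi-finite is sharp. For instance, over $S=\Spec\Z_p[\zeta_p]$ the sheaf $\mc{H}om_S(\mu_p,\Z/p\Z)$ has $p$ points in the generic fibre (where $\mu_p\cong\Z/p\Z$) but is trivial in the special fibre (where $\mu_p$ is infinitesimal), so it cannot be finite over the connected base $S$. The underlying problem is that a maximal \'etale quotient $\pi_0(K/S)$ of a finite locally free $S$-scheme, finite \'etale over $S$ and with formation commuting with base change, does not exist in general: the number of connected components of the fibres can jump (already for $K=\Spec\Z_p[x]/(x^2-px)$ over $\Z_p$, with two components generically and one specially), and representing the fppf sheaf would require $\pi_0(K_T/T)\cong\pi_0(K/S)\times_ST$ for every $T$. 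The appeal to \'etale descent from the strictly henselian case cannot repair this, since a general base is not covered in the \'etale topology by strictly henselian local schemes and the local constructions do not agree on overlaps when the component count jumps. A smaller issue occurs earlier: you apply Cartier duality to $F'[n]$, but the $n$-torsion of a finite flat group scheme need not be flat over a general base. That one is harmless --- dualize $F'$ itself and use $\mc{H}om_S(\mu_n,F')\cong\mc{H}om_S((F')^{D},\underline{\Z/n\Z})$ without ever forming $F'[n]$.

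Your alternative via idempotents is the correct fix and should be the main argument, not a footnote: a $T$-morphism $K_T\to\underline{\Z/n\Z}$ is a complete system of $n$ orthogonal idempotents in the finite locally free $\mc{O}_T$-algebra $\mc{O}_{K_T}$; this functor is cut out by $e_i^2=e_i$, $e_ie_j=0$, $\sum_ie_i=1$ inside a vector bundle over $S$, hence is affine over $S$, and it is formally \'etale (idempotents lift uniquely through nilpotent thickenings) and of finite presentation, hence \'etale, quasi-finite and separated; the homomorphism condition is open and closed because $K\times_SK$ is finite flat over $S$ and $\underline{\Z/n\Z}$ is \'etale and separated. Even here, though, drop the word ``finite'': idempotents need not extend over a trait, so this scheme is only quasi-finite. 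For comparison, the paper argues quite differently: it takes a smooth resolution $0\to F'\to G_1\to G_2\to 0$, quotes from SGA3 the representability of $\mc{H}om_S(F,G_i)$ by smooth separated group schemes for $F$ of multiplicative type, and realizes $H$ as the kernel of an \'etale morphism between them, which avoids both duality and idempotents.
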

\begin{proof}
Let $0\rightarrow F'\rightarrow G_1\rightarrow G_2\rightarrow 0$ be the canonical smooth resolution of $F'$ with $G_1,G_2$ affine smooth group schemes over $S$, see \cite[Thm. A.5]{mil2}. Then we have an exact sequence
$$0\rightarrow \mc{H}om_S(F,F')\rightarrow \mc{H}om_S(F,G_1)\xrightarrow{\alpha} \mc{H}om_S(F,G_2)$$
of fppf sheaves of abelian groups over $S$. By \cite[Exp. XI, Cor. 4.2]{sga3-2}, the sheaves $\mc{H}om_S(F,G_i)$ for $i=1,2$ are representable by smooth separated group schemes over $S$. Hence $H=\mc{H}om_S(F,F')$ is the kernel of a morphism of representable sheaves and thus representable itself. Let $n$ be a positive integer which kills $F$. We then have 
\[\mc{H}om_S(F,G_i)=\mc{H}om_S(F,G_i)[n],\]
where $\mc{H}om_S(F,G_i)[n]$ denotes the $n$-torsion subgroup scheme of $\mc{H}om_S(F,G_i)$. The fibers of the group scheme $\mc{H}om_S(F,G_i)$ over $S$ are finite by the structure theorem \cite[Exp. XVII, Thm. 7.2.1]{sga3-2} of commutative group schemes and \cite[Exp. XVII, Prop. 2.4]{sga3-2}. It follows that the group schemes $\mc{H}om_S(F,G_i)$ for $i=1,2$ are \'etale and quasi-finite over $S$. By \cite[Cor. 17.3.5]{egaIV-4}, $\alpha$ is \'etale. It is also separated. It follows that $H$ is \'etale separated and quasi-finite. This finishes the proof.
\end{proof}

\section{Fiberwise flatness for algebraic stacks}
\begin{lem}\label{applem}
Let $\mathcal{S}$ be a locally noetherian algebraic stack. Let $f:\mathcal{X}\to \mathcal{Y}$ be a 1-morphism of locally noetherian algebraic stacks over $\mathcal{S}$, where $\mathcal{X}$ is flat over $\mathcal{S}$. Assume that $f_{s}:\mathcal{X}_{s}\to \mathcal{Y}_{s}$ is flat for all points (valued in fields) $s:\Spec(k(s))\to \mathcal{S}$. Then $f$ is flat.
\end{lem}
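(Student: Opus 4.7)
The plan is to reduce the statement to the classical fiberwise flatness criterion for schemes (for instance \cite[\href{https://stacks.math.columbia.edu/tag/039C}{Tag 039C}]{stacks-project}) by passing to smooth atlases. First I would choose a smooth surjection $U \to \mathcal{S}$ from a scheme $U$ and replace $\mathcal{X}, \mathcal{Y}, f$ by their base changes to $U$. The flatness of $\mathcal{X} \to \mathcal{S}$ is preserved under base change, and the fiberwise hypothesis is preserved because every field-valued point of $\mathcal{S}$ factors, after possibly enlarging the field, through the smooth surjective cover $U \to \mathcal{S}$. Conversely, flatness of a morphism of algebraic stacks is smooth-local on the target of any base, so flatness of $f$ is equivalent to flatness of the pullback $f_U$. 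Thus I may assume $\mathcal{S} = U$ is a scheme.

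Next, pick a smooth surjection $V \to \mathcal{Y}$ from a scheme $V$, form $\mathcal{Z} := \mathcal{X} \times_{\mathcal{Y}} V$, and pick a smooth surjection $W \to \mathcal{Z}$ from a scheme $W$. By the very definition of flatness for morphisms of algebraic stacks, $f$ is flat if and only if the induced morphism of schemes $g : W \to V$ is flat. Moreover $W \to U$ factors as $W \to \mathcal{Z} \to \mathcal{X} \to U$, the composition of a smooth morphism, the base change of the smooth morphism $V \to \mathcal{Y}$, and the flat morphism $\mathcal{X} \to U$; hence $W$ is flat over $U$. For each point $u \in U$ the fiber $W_u \to V_u$ is obtained by smooth base change from $f_u : \mathcal{X}_u \to \mathcal{Y}_u$, which is flat by hypothesis, so smooth descent of flatness yields that $W_u \to V_u$ is flat for every $u$.

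Finally I would invoke the scheme-level fiberwise flatness criterion applied to $g : W \to V$ over $U$ to deduce that $g$, and hence $f$, is flat. The main subtlety will be verifying the finiteness conditions that the scheme criterion requires: since $V \to \mathcal{Y}$ and $W \to \mathcal{Z}$ are smooth (in particular locally of finite presentation) and all stacks in sight are locally noetherian, the schemes $V$ and $W$ are locally noetherian and $g$ is sufficiently controlled for the usual form of the criterion to apply. In the intended application inside the proof of Theorem~\ref{thm5.1}, the morphism to which the lemma is applied comes from a morphism of finite flat group log schemes, which supplies the local-finite-presentation hypotheses concretely.
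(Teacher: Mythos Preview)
Your proposal is correct and follows essentially the same route as the paper: reduce to the scheme case by passing to smooth atlases and then invoke the classical fiberwise flatness criterion for locally noetherian schemes (the paper cites \cite[Tag 039D]{stacks-project}). The only cosmetic difference is that you reduce the base to a scheme first and then choose atlases for $\mathcal{Y}$ and $\mathcal{X}$, whereas the paper picks all three atlases at once, but the content is the same.
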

\begin{proof}
As being locally noetherian and flatness of a morphism are local for the smooth topology, everything reduces to the case of schemes, where the relevant statement is \cite[Tag 039D]{stacks-project}.

More precisely, we have a commutative diagram
\begin{center}
$\xymatrix{
X\ar[r]^{\tilde{f}}\ar[d] & Y\ar[r]\ar[d] & S\ar[d] \\
\mathcal{X}\ar[r]^{f} & \mathcal{Y}\ar[r] & \mathcal{S}},
$
\end{center}
where the vertical morphisms are smooth covers by locally noetherian schemes. We then let $s':\Spec(k(s))\to S$ be a point of $S$. It maps to a point $s$ of $\mathcal{S}$. We then get a commutative diagram
\begin{center}
$\xymatrix{
X_{s'}\ar[r]^{\tilde{f}_{s'}}\ar[d] & Y_{s'}\ar[d]  \\
\mathcal{X}_{s}\ar[r]^{f_{s}} & \mathcal{Y}_{s}}.
$
\end{center}
where again the vertical maps are smooth coverings by schemes. Now $f_{s}$ is flat by assumption. But this is the same thing as saying that $\tilde{f}_{s'}$ is flat (see for example \cite[\href{https://stacks.math.columbia.edu/tag/06FN}{Tag 06FN}]{stacks-project} and note that flatness is local on the source-and-target). Hence $\tilde{f}_{s'}$ is flat for all $s'\in S$. So, by the fiberwise flatness criterion for schemes, $\tilde{f}$ is flat as well. This in turn implies that $f$ is flat. 
\end{proof} 

\begin{cor}\label{corB.1}
Let $S_0$ be a  noetherian fs log scheme, and let $S_0\to S$ be a strict closed immersion defined by a nilpotent sheaf of ideals $I$. Let $f:G\to H$ be a morphism of noetherian fs log schemes over $S$ with $G\to S$ log flat. Assume that $f_0:=f\times_SS_0:G_0\to H_0$ is log flat. Then $f$ is log flat. 
\end{cor}
\begin{proof}
Consider the following diagram of algebraic stacks
\begin{center}
$\xymatrix{
\mathcal{L}og_{G}\ar[rr]^{\mathcal{L}og(f)}\ar[dr] && \mathcal{L}og_{H}\ar[dl] \\
& \mathcal{L}og_{S} &}.
$
\end{center}
Here, if $Z$ is a log scheme, $\mathcal{L}og_{Z}$ denotes the stack of log structures introduced in \cite[\S 1]{ols1}. It is an algebraic stack. If $Z$ is locally noetherian, then so is $\mathcal{L}og_Z$ (by \cite[Cor. 5.25]{ols1}). Furthermore a morphism $f:Z\to Z'$ of fine log schemes is log flat if and only if the induced morphism $\mathcal{L}og(f):\mathcal{L}og_{Z}\to \mathcal{L}og_{Z'}$ is flat (see \cite[Theorem 4.6 + Remark 4.7]{ols1}).

Moreover, by \cite[Proposition 3.20]{ols1}, the diagram above reduces modulo $I$ to 
\begin{center}
$\xymatrix{
\mathcal{L}og_{G_0}\ar[rr]^{\mathcal{L}og(f_{0})}\ar[dr] && \mathcal{L}og_{H_{0}}\ar[dl] \\
& \mathcal{L}og_{S_0} &}.
$
\end{center}
Now, as $f_{0}$ is log flat, $\mathcal{L}og(f_{0})$ is flat. Thus, by the fiberwise flatness criterion for algebraic stacks (see Lemma \ref{applem}), $\mathcal{L}og(f)$ is flat as well, which means that $f$ is log flat. 
\end{proof}

\section{Kummer log flat descent of finiteness}
Following \cite[Def. 1.9]{nak1}, we say a morphism $f:X\to S$ of fs log scheme is finite (resp. quasi-finite, resp. separated, resp. proper, resp. universally closed, resp. locally of finite presentation, surjective), if the underlying morphism of schemes is so. 

\begin{rmk}\label{rmkC.1}
According to \cite[Lem. 4.8]{ols1}, $f:X\to S$ is locally of finite presentation if and only if the induced morphism $\mc{L}og(f):\mc{L}og_X\to \mc{L}og_S$ of algebraic stacks is locally of finite presentation, where $\mc{L}og_T$, for a log scheme $T$, is the stack of log structures as in \cite[\S 1]{ols1}.
\end{rmk}

By \cite[\href{https://stacks.math.columbia.edu/tag/02LA}{Tag 02LA}]{stacks-project}, the finiteness of morphisms of schemes descends in the fpqc topology. According to \cite[Thm. 7.1]{kat2}, the descent of the finiteness of morphisms of fs log schemes in the Kummer flat topology has been shown in Tani's thesis \cite{tan1}. However the thesis is in Japanese, so we present a proof here.

Let $X\to S$ and $T\to S$ be two morphisms of fs log schemes. We denote by $\mathring{X}\times_{\mathring{S}}\mathring{T}$ the fiber product of the underlying schemes. Note that in this paper $X\times_ST$ denotes the fiber product in the category of fs log schemes.
\begin{prop}\label{propC.1}
Let $f:X\to S$ be a morphism of fs log schemes, and $g:S'\to S$ a Kummer log flat cover, see Definition \ref{defn1.1}. Let $f':X':=X\times_SS'\rightarrow S'$ be the base change of $f$. Then $f$ is finite if and only if $f'$ is finite.
\end{prop}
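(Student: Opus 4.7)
My plan proceeds by handling the two implications separately, the second being the substantive one.

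For the ``only if'' direction, I would argue that finiteness passes along base change in fs log schemes as follows. The classical base change $\underline{X}\times_{\underline{S}}\underline{S'}\to \underline{S'}$ is finite by ordinary base change of schemes. The canonical map $\underline{X\times_S S'}\to \underline{X}\times_{\underline{S}}\underline{S'}$ (from the underlying scheme of the fs fiber product to the classical fiber product) comes from integralization and saturation of a pushout of fs monoids, which under the Kummer hypothesis on $g$ amounts to adjoining roots of a finitely generated monoid extension; this is a finite morphism of schemes. Composing two finite morphisms gives finiteness of $\underline{f'}$.

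For the ``if'' direction, my strategy is to use the standard scheme-theoretic characterization: $\underline{f}$ is finite iff it is (a) locally of finite presentation, (b) separated, (c) universally closed, and (d) has finite fibers, and then to descend each of (a)--(d) along $g$. For (a), I would invoke Remark \ref{rmkC.1}: $f$ is locally of finite presentation iff $\mc{L}og(f):\mc{L}og_X \to \mc{L}og_S$ is. Compatibility of $\mc{L}og$ with fs fiber products (\cite[Prop. 3.20]{ols1}) gives $\mc{L}og_{X'}=\mc{L}og_X\times_{\mc{L}og_S}\mc{L}og_{S'}$. Since $g$ is log flat Kummer surjective and locally of finite presentation, $\mc{L}og(g)$ is flat, surjective, and locally of finite presentation (again by Olsson), so the descent of lcfp for algebraic stacks along such morphisms yields (a).

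For (b)--(d), my plan is to reduce to the strict flat case. By \cite[Thm. 2.7(2)]{kat2}, after a further Kummer log flat refinement $S''\to S'$, one can arrange that the composite $S''\to S$ is strict, hence classically flat by \cite[Lem. 4.3.1]{k-s1}. In the strict case, $\underline{X\times_S S''}$ coincides with the classical fiber product $\underline{X}\times_{\underline{S}}\underline{S''}$, and the underlying morphism $\underline{S''}\to \underline{S}$ is fpqc (since it is classically flat, surjective, and lcfp). Classical fpqc descent for schemes then gives that each of ``separated'', ``universally closed'', and ``has finite fibers'' descends from $\underline{X\times_S S''}\to \underline{S''}$ to $\underline{X}\to \underline{S}$. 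Combined with (a), this establishes that $\underline{f}$ is finite.

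The main obstacle I anticipate is the reduction to the strict case for (b)--(d): one has to carefully track how the underlying scheme of the fs fiber product relates to the classical one along the further Kummer refinement, using the Kummer hypothesis to control the integralization/saturation step so it does not obstruct applying classical fpqc descent of these topological/point-set properties. A secondary technicality is verifying, in the argument for (a), that flat surjective lcfp descent really does apply on the level of Olsson's stacks $\mc{L}og$ (which are only locally noetherian, but for lcfp descent this is sufficient).
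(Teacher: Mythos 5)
Your ``only if'' direction is fine (it is \cite[\S 1.10]{nak1}), and your decomposition of finiteness into (a) locally of finite presentation, (b) separated, (c) universally closed, (d) finite fibers is a legitimate strategy; your route to (a) via Olsson's stacks is a plausible variant of what the paper does (the paper instead verifies the criterion of \cite[Lem.~4.3.1]{k-s1} directly), though you would still need to justify $\mc{L}og_{X\times_SS'}\cong\mc{L}og_X\times_{\mc{L}og_S}\mc{L}og_{S'}$ for the \emph{fs} fiber product, where a saturation step intervenes.

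The genuine gap is the descent of (b)--(d). You assert that by \cite[Thm.~2.7~(2)]{kat2} one can refine $S'\to S$ by some $S''\to S'$ so that the composite $S''\to S$ is strict. That theorem says something different: given a Kummer log flat morphism $G\to S$, one can find a Kummer log flat cover of the \emph{base} $S$ after which the base change of $G$ becomes strict over the new base. It does not allow you to replace a Kummer cover of $S$ by a strict one, and in general no such refinement exists. Take $S$ the standard log point $\Spec k$ with chart $\N\to k$, $1\mapsto 0$, and $S'\to S$ the degree-$2$ Kummer cover induced by $\N\hookrightarrow\tfrac{1}{2}\N$. For any nonempty fs log scheme $S''$ over $S'$, the image of $\tfrac{1}{2}$ in $\overline{M}_{S''}$ is an element whose double is the image $\bar t$ of $1\in\overline{M}_S$; since $\bar t$ has no half in $\N\cdot\bar t$ and $t$ maps to $0$ in $\mc{O}_{S''}$ (so never becomes a unit), the composite $S''\to S$ is never strict. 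Hence your appeal to classical fpqc descent along $\underline{S''}\to\underline{S}$, and the identification $\underline{X\times_SS''}=\underline{X}\times_{\underline{S}}\underline{S''}$, both fail; this is precisely why Kummer log flat descent is harder than fppf descent. The paper circumvents this as follows: quasi-finiteness descends by Nakayama's ``fourth point lemma'' \cite[\S 2.2.2]{nak1}; universal closedness descends by a purely topological argument using that Kummer log flat covers are open and surjective on underlying topological spaces \cite[Prop.~2.5]{kat2}; and separatedness is then obtained by applying the universal-closedness descent to the diagonal together with the finiteness of $X\times_SX\to\mathring{X}\times_{\mathring{S}}\mathring{X}$. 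You would need to replace your step for (b)--(d) by arguments of this kind.
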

\begin{proof}
If $f$ is finite, so is $f'$ by \cite[\S 1.10]{nak1}. 

Conversely assume that $f'$ is finite. To show that $f$ is finite, we need to show that $f$ is (1) quasi-finite, (2) universally closed, (3) separated, and (4) of finite type. 

(1) The quasi-finiteness of $f$ follows from Nakayama's ``fourth point lemma'' (see \cite[\S 2.2.2]{nak1}) and the finiteness of $f'$. 

(2) To show $f$ is universally closed, it suffices to show that for any strict morphism $U\to S$, the base change $X\times_SU\to U$ is closed. Consider the following diagram
$$\xymatrix{
X'\ar[d] &X'\times_SU\ar[l]\ar[r]\ar[d] &X\times_SU\ar[d]  \\
S' &S'\times_SU\ar[l]\ar[r] &U
}$$
with the two squares cartesian. Since $X'\to S'$ is universally closed and $S'\times_SU\to S'$ is strict, the map $X'\times_SU\to S'\times_SU$ is closed. Since $S'\to S$ is a Kummer log flat cover, so are $S'\times_SU\to U$ and $X'\times_SU\to X\times_SU$. By \cite[Prop. 2.5]{kat2}, the underlying topological maps of $S'\times_SU\to U$ and $X'\times_SU\to X\times_SU$ are open and surjective. 

Let $(\mr{logSch}/S)$ be the category of log schemes over $S$, and let $(\mr{int}/S)$ be the full subcategory consisting of integral log schemes over $S$. Recall that $(\mr{fs}/S)$ is the full subcategory consisting of fs log schemes over $S$. We denote by $Z$ the fiber product of $S'\times_SU$ and $X\times_SU$ over $U$ in $(\mr{logSch}/S)$. Then the integral log scheme $Z^{\mr{int}}$ associated to $Z$ is the fiber product of $S'\times_SU$ and $X\times_SU$ over $U$ in $(\mr{int}/S)$, and the fs log scheme $Z^{\mr{fs}}:=(Z^{\mr{int}})^{\mr{sat}}$ associated to $Z^{\mr{int}}$ is the fiber product of $S'\times_SU$ and $X\times_SU$ over $U$ in $(\mr{fs}/S)$, see \cite[Chap. III, Prop. 2.1.5, Cor. 2.1.6]{ogu1}. Note that the fiber product $Z^{\mr{fs}}$ of $S'\times_SU$ and $X\times_SU$ over $U$ in $(\mr{fs}/S)$ is nothing but the fiber product $X'\times_SU$ (in $(\mr{fs}/S)$). The map $X'\times_S U=Z^{\mr{fs}}\to Z^{\mr{int}}$ is finite and surjective by \cite[Chap. III, Prop. 2.1.5 part 2]{ogu1}, and the map $Z^{\mr{int}}\to Z$ is a nil immersion by \cite[Chap. III, Cor. 2.2.4]{ogu1}. Now we put the information useful for us in the following commutative diagram
\[\xymatrix{
X'\times_SU\ar@/^2pc/[rrrdd]^-{\text{open surjective}}_{b}\ar@/_2pc/[rrddd]_{\mr{closed}}^a\ar[rd]^-{\text{surjective}}_c \\
&Z^{\mr{int}}\ar[rd]^-{\text{homeomorphism}}_d \\
&&Z\ar[r]^-\alpha_-{\text{surj.}}\ar[d]_\beta  &X\times_SU\ar[d]^\delta  \\
&&S'\times_SU\ar[r]_-{\text{open surj.}}^{\gamma} &U
}\]
of topological spaces. To prove that $X\times_SU\to U$ is closed, it suffices to show that $\delta(V)$ is closed for any closed subset $V$ of $X\times_SU$. Since $c$ is surjective, one can check that $\beta(\alpha^{-1}(V))=a(b^{-1}(V))$. Since the map $a$ is closed, the set $\beta(\alpha^{-1}(V))$ is closed. Since $\alpha$ is surjective, one sees that $\delta(V)=\gamma(\beta(\alpha^{-1}(V)))$. By the cartesian property of the inner square in the category of schemes, one sees that $\gamma^{-1}(\delta(V))=\beta(\alpha^{-1}(V))$ by \cite[Lem. 4.28]{g-w1}. Since $\gamma$ is open and $\beta(\alpha^{-1}(V))$ is closed, we get that $\delta(V)$ is closed.

(3) Let $\Delta_{X'/S'}:X'\rightarrow X'\times_{S'}X'$ (resp. $\Delta_{\mathring{X}'/\mathring{S}'}:\mathring{X}'\rightarrow \mathring{X}'\times_{\mathring{S}'}\mathring{X}'$) be the diagonal map in the category of fs log schemes (resp. category of schemes). Similarly for $\Delta_{X/S}$ and $\Delta_{\mathring{X}/\mathring{S}}$. We have $X'\times_{S'}X'=(X\times_SX)\times_SS'$ (resp. $\mathring{X}'\times_{\mathring{S}'}\mathring{X}'=(\mathring{X}\times_{\mathring{S}}\mathring{X})\times_{\mathring{S}}\mathring{S}'$) and $\Delta_{X'/S'}=\Delta_{X/S}\times_SS'$ (resp. $\Delta_{\mathring{X}'/\mathring{S}'}=\Delta_{\mathring{X}/\mathring{S}}\times_{\mathring{S}}\mathring{S}'$). In the commutative diagram 
$$\xymatrix{
X'\ar[r]^-{\Delta_{X'/S'}}\ar[rd]_-{\Delta_{\mathring{X}'/\mathring{S}'}} &X'\times_{S'}X'\ar[d]  \\
&\mathring{X}'\times_{\mathring{S}'}\mathring{X}'
},$$
the map $\Delta_{\mathring{X}'/\mathring{S}'}$ is a closed immersion by the separatedness of $X'\to S'$, the map $X'\times_{S'}X'\to \mathring{X}'\times_{\mathring{S}'}\mathring{X}'$ is finite by \cite[\S 1.10]{nak1}, hence $\Delta_{X'/S'}$ is a closed immersion. In particular $\Delta_{X'/S'}$ is universally closed. By the descent of universal closeness along Kummer log flat covers from last part, we have that $\Delta_{X/S}$ is universally closed. The finite morphism $X\times_SX\to \mathring{X}\times_{\mathring{S}}\mathring{X}$ is clearly universally closed, hence $\Delta_{\mathring{X}/\mathring{S}}$ as the composition $X\xrightarrow{\Delta_{X/S}} X\times_SX\to \mathring{X}\times_{\mathring{S}}\mathring{X}$ is also universally closed. By \cite[\href{https://stacks.math.columbia.edu/tag/01KJ}{Tag 01KJ}]{stacks-project}, $\Delta_{\mathring{X}/\mathring{S}}$ is an immersion. It follows that $\Delta_{\mathring{X}/\mathring{S}}$ is a closed immersion, i.e. $X$ is separated over $S$. 

(4) By \cite[\href{https://stacks.math.columbia.edu/tag/04XU}{Tag 04XU}]{stacks-project}, the universally closed map $f:X\to S$ is quasi-compact. Thus to show that $f$ is of finite type, it suffices to show that it is locally of finite type. 
By \cite[\href{https://stacks.math.columbia.edu/tag/02KX}{Tag 02KX}]{stacks-project}, being locally of finite type is local on the target for the fppf topology. Therefore we may assume that the Kummer log flat cover $g$ is as in the local situation of \cite[Prop. 1.3]{i-n-t1}, i.e. $g$ admits a chart $(Q\to M_{S'},P\to M_S, P\xrightarrow{h} Q)$ with $P$, $Q$ fs monoids, and $h$ Kummer, and thus $g$ admits a factorization $S'\xrightarrow{g_1} S''\xrightarrow{g_2} S$ of $g$ with $S'':=S\times_{\Spec\Z[P]}\Spec\Z[Q]$ and $g_1$ a classical fppf cover. Consider the following diagram
\[\xymatrix{
X'\ar[r]\ar[d]^{f'} &X\times_SS''\ar[r]\ar[d]^{f''} &X\ar[d]^f \\
S'\ar[r]^{g_1} &S''\ar[r]^{g_2} &S
}\]
with the two small squares cartesian. Since $f'$ is of finite type, so is $f''$ by \cite[\href{https://stacks.math.columbia.edu/tag/02KZ}{Tag 02KZ}]{stacks-project}. By \cite[Lem. 2.1]{i-n-t1}, $g_2$ satisfies the condition on the base change map in \cite[Lem. 2.3]{i-n-t1}. From this, we further get that $f$ is locally of finite type. This finishes the proof.
\end{proof}

\section{Decomposition of Kummer log flat torsors under classical finite flat group schemes over henselian local base}
Let $S$ be a locally noetherian fs log scheme, and let $\varepsilon:(\mr{fs}/S)_{\mr{kfl}}\to (\mr{fs}/S)_{\mr{fl}}$ be the forgetful map between these two sites. Let $G$ be a finite flat group scheme over the underlying scheme of $S$, and we endow it with the induced log structure from $S$. The Leray spectral sequence $H^i_{\mr{fl}}(S,R^j\varepsilon_*G)\Rightarrow H^{i+j}_{\mr{kfl}}(S,G)$ gives rise to an exact sequence
\begin{equation}\label{eqD.1}
0\to H^1_{\mr{fl}}(S,G)\to H^{1}_{\mr{kfl}}(S,G)\xrightarrow{\alpha} H^0_{\mr{fl}}(S,R^1\varepsilon_*G).
\end{equation}

\begin{prop}\label{propD}
Assume that the underlying scheme of $S$ is the spectrum of a henselian local ring, and the log structure of $S$ admits a chart $P\to M_S$ for an fs monoid $P$ such that the induced map $P\to M_{S,\bar{s}}/\mc{O}_{S,\bar{s}}^\times$ is an isomorphism, where $s$ denotes the closed point of $S$. Then we have the following.
\begin{enumerate}[(1)]
\item Let $n$ be a positive integer which kills $G$. Then
\begin{align*}
H^0_{\mr{fl}}(S,R^1\varepsilon_*G)=&H^0_{\mr{fl}}(S,\mc{H}om_S(\Z/n\Z(1),G)\otimes_{\Z}(\Gml/\Gm)_{S_{\mr{fl}}})  \\
=&\mr{Hom}_S(\mathbb Z/n\mathbb Z(1),G)\otimes_{\mathbb Z}P^{\mr{gp}}.
\end{align*}
\item The short exact sequence $0\to\Z/n\Z(1)\to\Gml\xrightarrow{n}\Gml\to0$ on $(\mr{fs}/S)_{\mr{kfl}}$ gives rise to a canonical map $\delta:H^0_{\mr{kfl}}(S,\Gml)\to H^1_{\mr{kfl}}(S,\Z/n\Z(1))$. For any $a\in P^{\mr{gp}}$, let $T_a$ denote the Kummer log flat $\Z/n\Z(1)$-torsor over $S$ given by $\delta(a)$. Then the map $\alpha$ from the exact sequence (\ref{eqD.1}) admits a canonical section given by
$$\beta:\mr{Hom}_S(\mathbb Z/n\mathbb Z(1),G)\otimes_{\mathbb Z}P^{\mr{gp}}\to H^{1}_{\mr{kfl}}(S,G),\quad h\otimes_{\Z}a\mapsto h_*T_a.$$
\item We have a canonical (after having fixed $P$) decomposition
\begin{equation}\label{eqD.2}
H^{1}_{\mr{kfl}}(S,G)\cong H^{1}_{\mr{fl}}(S,G)\oplus \mr{Hom}_S(\mathbb Z/n\mathbb Z(1),G)\otimes_{\mathbb Z}P^{\mr{gp}}.
\end{equation}
\end{enumerate}
\end{prop}
\begin{proof}
(1) Let $\mc{G}:=(\Gml/\Gm)_{S_{\mr{fl}}}$. By Kato's theorem, see \cite[Thm. 4.1]{kat2} or \cite[Thm. 3.12]{niz1}, we have an isomorphism
\begin{equation}\label{eqD.3}
\gamma:\mc{H}om_S(\Z/n\Z(1),G)\otimes_{\Z}\mc{G}\xrightarrow{\cong}R^1\varepsilon_*G,
\end{equation}
which is locally given by the map $\beta$ (see \cite[\S 4.3]{kat2}). Consider the following canonical commutative diagram
\begin{equation}\label{eqD.4}
\xymatrix{
\mr{Hom}_S(\mathbb Z/n\mathbb Z(1),G)\otimes_{\mathbb Z}P^{\mr{gp}}\ar[r]\ar[d]  &H^0_{\mr{fl}}(S,\mc{H}om_S(\Z/n\Z(1),G)\otimes_{\Z}\mc{G})\ar[d] \\
\mr{Hom}_s(\mathbb Z/n\mathbb Z(1),G)\otimes_{\mathbb Z}P^{\mr{gp}}\ar[r] &H^0_{\mr{fl}}(s,\mc{H}om_S(\Z/n\Z(1),G)\otimes_{\Z}\mc{G})
}.
\end{equation}
By \cite[\href{https://stacks.math.columbia.edu/tag/09ZI}{Tag 09ZI}]{stacks-project}, we have 
\begin{align*}
&H^0_{\mr{fl}}(S,\mc{H}om_S(\Z/n\Z(1),G)\otimes_{\Z}\mc{G}) \\
=&H^0_{\mr{\acute{e}t}}(S,\mc{H}om_S(\Z/n\Z(1),G)\otimes_{\Z}(\Gml/\Gm)_{S_{\mr{\acute{e}t}}})  \\
\cong &H^0_{\mr{\acute{e}t}}(s,\mc{H}om_S(\Z/n\Z(1),G)\otimes_{\Z}(\Gml/\Gm)_{S_{\mr{\acute{e}t}}})  \\
=&H^0_{\mr{fl}}(s,\mc{H}om_S(\Z/n\Z(1),G)\otimes_{\Z}\mc{G})
\end{align*}
and 
\begin{align*}
\mr{Hom}_S(\mathbb Z/n\mathbb Z(1),G)\otimes_{\mathbb Z}P^{\mr{gp}}\cong\mr{Hom}_s(\mathbb Z/n\mathbb Z(1),G)\otimes_{\mathbb Z}P^{\mr{gp}}.
\end{align*}
Let $k$ be the residue field of $S$. The property of the given chart forces the induced log structure on $s$ to be the one associated to $P\mapsto k,p\mapsto \begin{cases}0,&\text{ if $p\neq0$;}  \\ 1,&\text{otherwise.}\end{cases}$ Hence the lower horizontal map in (\ref{eqD.4}) is clearly an isomorphism. Then part (1) follows from the commutative diagram (\ref{eqD.4}).

(2) Since the map $\gamma$ from (\ref{eqD.3}) is locally constructed in the same way as $\beta$, the diagram
$$\xymatrix{
&\mr{Hom}_S(\Z/n\Z(1),G)\otimes_{\Z}P^{\mr{gp}}\ar[d]^{H^0_{\mr{fl}}(S,\gamma)}_{\cong}\ar[ld]_{\beta}  \\
H^{1}_{\mr{kfl}}(S,G)\ar[r]^\alpha &H^0_{\mr{fl}}(S,R^1\varepsilon_*G)
}$$ 
is commutative. Therefore $\beta$ is a section to $\alpha$ after the identification $H^0_{\mr{fl}}(S,\gamma)$.

Part (3) follows from part (2) and the exact sequence (\ref{eqD.1}).
\end{proof}

\section*{Acknowledgement}
The first named author has received funding from the German Research Foundation (DFG), projects \textit{Vektorb\"undel und lokale Systeme auf nicht-Archimedischen analytischen R\"aumen} (project number 446443754) and \textit{TRR 326 Geometry and Arithmetic of Uniformized Structures} (project number 444845124).

The second named author thanks Professor Ulrich G\"ortz for very helpful discussions and the Deutsche Arberitsgemeinschaft Kleine AG 2018  ``Deformationen abelscher Variet\"aten'', from which he has benefited a lot. He was partially supported by the \textit{Research Training Group 2553} of the German Research Foundation (DFG).

We wish to thank the anonymous referees for very helpful suggestions for improvements.

\end{document}